\def\rr{{\mathbb R}}
\def\rn{{\mathbb{R}^n}}
\def\urn{\mathbb{R}_+^{n+1}}
\def\zz{{\mathbb Z}}
\def\cc{{\mathbb C}}
\def\nn{{\mathbb N}}
\def\ca{{\mathcal A}}
\def\ccc{{\mathcal C}}
\def\cf{{\mathcal F}}
\def\cl{{\mathcal L}}
\def\cm{{\mathcal M}}
\def\cp{{\mathcal P}}
\def\cq{{\mathcal Q}}
\def\mr{{\mathcal R}}
\def\cs{{\mathcal S}}
\def\fz{\infty }
\def\az{\alpha}
\def\ez{\epsilon}
\def\gz{{\gamma}}
\def\lz{\lambda}
\def\oz{{\omega}}
\def\vz{\varphi}
\def\vez{\varepsilon}
\def\lf{\left}
\def\r{\right}
\def\hs{\hspace{0.25cm}}
\def\ls{\lesssim}
\def\noz{\nonumber}
\def\wz{\widetilde}
\def\wh{\widehat}
\def\gfz{\genfrac{}{}{0pt}{}}
\def\vlp{{L^{p(\cdot)}(\rn)}}
\def\vhs{H^{p(\cdot)}(\rn)}
\def\cps{\cl_{q,p(\cdot),s}(\rn)}
\def\cpss{\cl_{1,p(\cdot),s}(\rn)}
\newtheorem{theorem}{Theorem}[section]
\newtheorem{lemma}[theorem]{Lemma}
\newtheorem{corollary}[theorem]{Corollary}
\newtheorem{proposition}[theorem]{Proposition}
\theoremstyle{definition}
\newtheorem{remark}[theorem]{Remark}
\newtheorem{definition}[theorem]{Definition}
\renewcommand{\appendix}{\par
   \setcounter{section}{0}%
   \setcounter{subsection}{0}%
   \setcounter{subsubsection}{0}%
   \gdef\thesection{\@Alph\c@section}%
   \gdef\thesubsection{\@Alph\c@section.\@arabic\c@subsection}%
   \gdef\theHsection{\@Alph\c@section.}%
   \gdef\theHsubsection{\@Alph\c@section.\@arabic\c@subsection}%
   \csname appendixmore\endcsname
 }
\numberwithin{equation}{section}
\begin{document}

\arraycolsep=1pt

\title{\bf\Large Intrinsic Square Function Characterizations of Hardy Spaces
with Variable Exponents
\footnotetext{\hspace{-0.35cm} \emph{Communicated by} Rosihan M. Ali, Dato'.
\endgraf \emph{Received:} January 28, 2014; \emph{Revised:} March 26, 2014. }}
\author{\sc Ciqiang Zhuo, Dachun Yang\footnote{Corresponding author}\ \ and Yiyu Liang\\
{\small School of Mathematical Sciences, Beijing Normal University,}\\
{\small Laboratory of Mathematics and Complex Systems, Ministry of Education,}\\
{\small Beijing 100875, People's Republic of China}\\
{\small\noindent cqzhuo@mail.bnu.edu.cn, dcyang@bnu.edu.cn, yyliang@mail.bnu.edu.cn}}
\date{}
\maketitle

\vspace{-0.5cm}

\begin{center}
\begin{minipage}{13cm}
{\small {\bf Abstract}\quad
Let $p(\cdot):\ \mathbb R^n\to(0,\infty)$ be a measurable function satisfying some
decay condition and some locally log-H\"older continuity.
In this article, via first establishing characterizations
of the variable exponent Hardy space $H^{p(\cdot)}(\mathbb R^n)$ in terms of
the Littlewood-Paley $g$-function, the Lusin area function
and the $g_\lambda^\ast$-function, the authors then obtain
its intrinsic square function
characterizations including the intrinsic Littlewood-Paley $g$-function,
the intrinsic Lusin area function and the intrinsic $g_\lambda^\ast$-function.
The $p(\cdot)$-Carleson measure
characterization for the dual space of $H^{p(\cdot)}(\mathbb R^n)$,
the variable exponent Campanato space $\mathcal{L}_{1,p(\cdot),s}(\mathbb R^n)$,
in terms of the intrinsic function is also presented.

\vspace{0.5cm}

2010 Mathematics Subject Classification: Primary: 42B25,
Secondary: 42B30, 42B35, 46E30

\vspace{0.5cm}

Keywords and phrases: Hardy space, variable exponent,
intrinsic square function, Carlson measure, atom.}
\end{minipage}
\end{center}

\vspace{0.1cm}

\section{Introduction\label{s1}}

Variable exponent Lebesgue spaces are a generalization of the
classical $L^p(\rn)$ spaces,
in which the constant exponent $p$ is replaced by an exponent function
$p(\cdot): \rn\to(0,\fz)$, namely, they consist of all functions $f$ such that
$\int_\rn|f(x)|^{p(x)}\,dx<\fz$.
These spaces were introduced by Birnbaum-Orlicz \cite{bo31} and Orlicz
\cite{ol32},
and widely used in the study of harmonic analysis as well as partial
differential equations;
see, for example,
\cite{aa13,ai00,cfmp06,cdh11,cfbook,dhhm03,din04,dhr11,sd13,io02,mw08,wt13,wl13,y14}.
For a systematic research about the variable exponent Lebesgue space, we refer
the reader to \cite{cfbook,dhr11}

Recently, Nakai and Sawano \cite{ns12} extended the theory of variable
Lebesgue spaces via studying the Hardy spaces with variable exponents on $\rn$,
and Sawano in \cite{sawa13} further gave more applications of these
variable exponent Hardy spaces. Independently, Cruz-Uribe and Wang in
\cite{cw13} also investigated
the variable exponent Hardy space with some weaker conditions than those used in
\cite{ns12}, which also extends the theory of variable exponent Lebesgue spaces.
Recall that the classical Hardy spaces $H^p(\rn)$ with $p\in(0,1]$
on the Euclidean space $\rn$
and their duals are well studied (see, for example, \cite{fs72,{stein93}})
and have been playing
an important and fundamental role in various fields of analysis such as
harmonic analysis
and partial differential equations; see, for example, \cite{cg77,{muller94}}.

On the other hand, the study of the intrinsic square function
on function spaces, including Hardy spaces, has recently
attracted many attentions.
To be precise, Wilson \cite{wilson07} originally introduced
intrinsic square functions,
which can be thought of as ``grand maximal" square functions
of C. Fefferman and E. M. Stein from \cite{fs72},
to settle a conjecture proposed by R. Fefferman and
E. M. Stein on the boundedness of the Lusin area function $S(f)$ from
the weighted Lebesgue space $L_{\cm(v)}^2(\rn)$
to the weighted Lebesgue space $L^2_v(\rn)$, where $0\le v\in L_{\rm loc}^1(\rn)$
and $\cm$ denotes the Hardy-Littlewood maximal function.
The boundedness of these intrinsic square functions on the weighted
Lebesgue spaces $L^p_{\omega}(\rn)$, when $p\in(1,\fz)$
and $\oz$ belongs to Muckenhoupt weights $A_p(\rn)$,
was proved by Wilson \cite{w08}.
The intrinsic square functions dominate all square functions of the form
$S(f)$ (and the classical ones as well), but are not essentially bigger than any
one of them. Similar to the Fefferman-Stein and the Hardy-Littlewood
 maximal functions, their generic natures make them pointwise
equivalent to each other and extremely
easy to work with. Moreover, the intrinsic Lusin area function has the distinct
advantage of being pointwise comparable at different cone openings, which is a
property long known not to hold true for the classical Lusin area function;
see Wilson \cite{wilson07,w08,wilson10,wilson11} and also Lerner
\cite{lerner11,lerner13}.

Later, Huang and Liu in \cite{hl10} obtain the intrinsic square function
characterizations of the weighted Hardy space $H^1_\omega(\rn)$ under the
additional assumption that $f\in L_\omega^1(\rn)$,
which was further generalized to the weighted Hardy space $H^p_\omega(\rn)$
with $p\in( n/{(n+\alpha)},1)$ and $\alpha\in(0,1)$ by Wang and Liu in
\cite{wangliu}, under another additional assumption.
Very recently, Liang and Yang in \cite{lyts13} established the
$s$-order intrinsic
square function characterizations of the
Musielak-Orlicz Hardy space $H^\vz(\rn)$, which was introduced by Ky \cite{ky11}
and generalized both the Orlicz-Hardy space (see, for example,
\cite{janson80,{viviani87}}) and
the weighted Hardy space (see, for example, \cite{gc79,{st89}}), in terms of the
intrinsic Lusin area function, the intrinsic $g$-function and the intrinsic
$g_\lz^\ast$-function with the best known range
$\lz\in(2+2(\az+s)/n,\fz)$. More applications of such intrinsic square functions
were also given by Wilson \cite{wilson10,wilson11} and
Lerner \cite{lerner11,{lerner13}}.

Motivated by \cite{lyts13}, in this article,
we establish intrinsic square function
characterizations of the variable exponent Hardy space $\vhs$ introduced by Nakai
and Sawano in \cite{ns12},
including the intrinsic Littlewood-Paley $g$-function,
the intrinsic Lusin area function and the
intrinsic $g_\lz^\ast$-function by first
obtaining characterizations of $\vhs$ via the Littlewood-Paley $g$-function,
the Lusin area function and the $g_\lz^\ast$-function.
We also establish
the $p(\cdot)$-Carleson measure characterization for the dual space of
$H^{p(\cdot)}(\rn)$, the variable exponent Campanato space
$\mathcal{L}_{1,p(\cdot),s}(\mathbb R^n)$ in \cite{ns12},
in terms of the intrinsic square function.

To state the results, we begin with some notation.
In what follows, for a measurable function
$p(\cdot):\ \rn\to(0,\fz)$ and a measurable set $E$ of $\rn$, let
$$p_-(E):=\mathop{\rm ess\,inf}\limits_{x\in E}p(x)\quad {\rm and}\quad
p_+(E):=\mathop{\rm ess\,sup}\limits_{x\in E}p(x).$$
For simplicity, we let $p_-:=p_-(\rn)$, $p_+:=p_+(\rn)$ and
$p^\ast:=\min\{p_-,1\}$.
Denote by $\cp(\rn)$ the \emph{collection of all measurable functions
$p(\cdot):\ \rn\to(0,\fz)$ satisfying $0<p_-\le p_+<\fz$}.

For $p(\cdot)\in\cp(\rn)$, the \emph{space} $L^{p(\cdot)}(\rn)$
is defined to be the set of all measurable functions such that
$$\|f\|_{\vlp}:=\inf\lf\{\lz\in(0,\fz):\ \int_\rn\lf[\frac{|f(x)|}
{\lz}\r]^{p(x)}\,dx\le1\r\}<\fz.$$

\begin{remark}\label{r-vlp}
It was pointed out in \cite[p.\,3671]{ns12}
(see also \cite[Theorem 2.17]{cfbook})
that the follows hold true:
\begin{itemize}

\item[(i)] $\|f\|_{\vlp}\ge0$, and $\|f\|_{\vlp}=0$ if and only if $f(x)=0$
for almost every $x\in\rn$;

\item[(ii)] $\|\lz f\|_{\vlp}=|\lz|\|f\|_{\vlp}$ for any $\lz\in\cc$;

\item[(iii)] $\|f+g\|_{\vlp}^\ell\le \|f\|_{\vlp}^\ell+\|g\|_{\vlp}^\ell$ for all
$\ell\in(0,p^\ast]$;

\item[(iv)] for all measurable functions $f$ with $\|f\|_{\vlp}\neq0$,
$\int_\rn[{|f(x)|}/{\|f\|_{\vlp}}]^{p(x)}\,dx=1$.
\end{itemize}
\end{remark}
A function $p(\cdot)\in\cp(\rn)$ is said to satisfy the
\emph{locally log-H\"older continuous condition}
if there exists a positive constant $C$ such that, for all $x,\ y\in\rn$
and $|x-y|\le1/2$,
\begin{equation}\label{ve1}
|p(x)-p(y)|\le \frac{C}{\log(1/|x-y|)},
\end{equation}
and $p(\cdot)$ is said to satisfy the \emph{decay condition}
if there exist positive constants $C_\fz$ and $p_\fz$
such that, for all $x\in\rn$,
\begin{equation}\label{ve2}
|p(x)-p_\fz|\le \frac{C_\fz}{\log(e+|x|)}.
\end{equation}

In the whole article, we denote by $\cs(\rn)$ the \emph{space of all
Schwartz functions} and by $\cs'(\rn)$ its \emph{topological dual space}.
Let $\cs_\fz(\rn)$ denote the \emph{space of all Schwartz functions $\vz$}
satisfying $\int_\rn\vz(x)x^\beta\,dx=0$ for all multi-indices
$\beta\in\zz_+^n:=(\{0,1,\ldots\})^n$
and $\cs_\fz'(\rn)$ its \emph{topological dual space}.
For $N\in\nn:=\{1,2,\dots\}$, let
\begin{equation}\label{fnd}
\cf_N(\rn):=\lf\{\psi\in\cs(\rn):\ \sum_{\beta\in\zz_+^n,\,|\beta|\le N}
\sup_{x\in\rn}(1+|x|)^N|D^\beta\psi(x)|\le1\r\},
\end{equation}
where, for $\beta:=(\beta_1,\dots,\beta_n)\in\zz_+^n$,
$|\beta|:=\beta_1+\cdots+\beta_n$ and
$D^\beta:=(\frac\partial{\partial x_1})^{\beta_1}
\cdots(\frac\partial{\partial x_n})^{\beta_n}$.
Then, for all $f\in\cs'(\rn)$, the \emph{grand maximal function} $f_{N,+}^\ast$
of $f$ is defined by setting, for all $x\in\rn$,
$$f_{N,+}^\ast(x):=\sup\lf\{|f\ast\psi_t(x)|:\
t\in(0,\fz)\ {\rm and}\ \psi\in\cf_N(\rn)\r\},$$
where, for all $t\in(0,\fz)$ and $\xi\in\rn$, $\psi_t(\xi):=t^{-n}\psi(\xi/t)$.

For any measurable set $E\subset\rn$ and $r\in(0,\fz)$, let $L^r(E)$
be the \emph{set of all measurable functions} $f$ such that
$\|f\|_{L^r(E)}:=\lf\{\int_E|f(x)|^r\,dx\r\}^{1/r}<\fz$.
For $r\in(0,\fz)$, denote by $L_{\rm loc}^r(\rn)$ the \emph{set of all
$r$-locally integrable functions}
on $\rn$.
Recall that the \emph{Hardy-Littlewood maximal operator}
$\cm$ is defined by setting,
for all $f\in L_{\rm loc}^1(\rn)$ and $x\in\rn$,
$$\cm(f)(x):=\sup_{B\ni x}\frac1{|B|}\int_B |f(y)|\,dy,$$
where the supremum is taken over all balls $B$ of $\rn$ containing $x$.

Now we recall the notion of the Hardy space with variable exponent, $\vhs$,
introduced by Nakai and Sawano in \cite{ns12}. For simplicity, we also call
$\vhs$ the variable exponent Hardy space.
\begin{definition}\label{d-vhs}
Let $p(\cdot)\in\cp(\rn)$ satisfy \eqref{ve1} and \eqref{ve2},
and
\begin{equation}\label{1nx}
N\in\lf(\frac n{p_-}+n+1,\fz\r)\cap\nn.
\end{equation}
The \emph{Hardy space with variable exponent}
$p(\cdot)$, denoted by $H^{p(\cdot)}(\rn)$, is defined to be the set of all
$f\in\cs'(\rn)$ such that
$f_{N,+}^\ast\in \vlp$ with the \emph{quasi-norm} $\|f\|_{H^{p(\cdot)}(\rn)}:=
\|f_{N,+}^\ast\|_{\vlp}$.
\end{definition}
\begin{remark}
\begin{itemize}
\item[(i)] Independently, Cruz-Uribe and Wang in \cite{cw13}
introduced the variable exponent Hardy space, denoted by $\wz H^{p(\cdot)}$,
in the following way:
Let $p(\cdot)\in\cp(\rn)$ satisfy that there
exist $p_0\in(0,p_-)$ and a positive constant $C$,
only depending
on $n,\ p(\cdot)$ and $p_0$, such that
\begin{equation}\label{max}
\|\cm(f)\|_{L^{p(\cdot)/p_0}(\rn)}
\le C\|f\|_{L^{p(\cdot)/p_0}(\rn)}.
\end{equation}
If $N\in( n/{p_0}+n+1,\fz)$,
then the \emph{variable exponent Hardy space $\wz H^{p(\cdot)}$}
is defined to be the set of all $f\in\cs'(\rn)$ such that
$f_{N,+}^\ast\in \vlp$. In \cite[Theorem 3.1]{cw13}, it was shown that
the space $\wz H^{p(\cdot)}$ is independent of the choice of
$N\in( n/{p_0}+n+1,\fz)$.

\item[(ii)] We point out that, in \cite[Theorem 3.3]{ns12}, it was proved that
the space $\vhs$ is independent of $N$ as long as $N$ is sufficiently large.
Although the range of $N$ is not presented explicitly in \cite[Theorem 3.3]{ns12},
by the proof of \cite[Theorem 3.3]{ns12}, we see that
$N$ as in \eqref{1nx} does the work.
\end{itemize}
\end{remark}

Let $\phi\in\cs(\rn)$ be a radial real-valued function satisfying
\begin{equation}\label{function1}
{\rm supp}\,\wh\phi\subset\{\xi\in\rn:\ 1/2\le|\xi|\le2\}
\end{equation}
and
\begin{equation}\label{function2}
|\wh\phi(\xi)|\ge C\ {\rm if}\ 3/5\le|\xi|\le5/3,
\end{equation}
where $C$ denotes a positive constant independent of $\xi$ and,
for all $\phi\in\cs(\rn)$,
$\wh\phi$ denotes its \emph{Fourier transform}.
Obviously, $\phi\in\cs_\fz(\rn)$.
Then, for all $f\in\cs_\fz'(\rn)$,
the \emph{Littlewood-Paley $g$-function}, the \emph{Lusin area function}
and the \emph{$g_\lz^\ast$-function} with $\lz\in(0,\fz)$ of $f$ are,
respectively,
defined by setting, for all
$x\in\rn$,
$$g(f)(x):=\lf\{\int_0^\fz|f\ast \phi_t(x)|^2\frac{dt}t\r\}^{1/2},$$
\begin{equation*}
S(f)(x)
:=\lf\{\int_0^\fz\int_{\{y\in\rn:\ |y-x|<t\}}|\phi_t\ast f(y)|^2\frac{dy\,dt}
{t^{n+1}}\r\}^{1/2}
\end{equation*}
and
\begin{equation}\label{1gx}
g_\lz^\ast(f)(x):=\lf\{\int_0^\fz\int_\rn\lf(\frac t{t+|x-y|}\r)^{\lz n}
|\phi_t\ast f(y)|^2\,\frac{dy\,dt}{t^{n+1}}\r\}^{\frac12}.
\end{equation}
For all $f\in\cs_\fz'(\rn)$ and $\phi\in\cs(\rn)$ satisfying \eqref{function1}
and
\eqref{function2}, we let,
for all $t\in(0,\fz)$, $j\in\zz$, $a\in(0,\fz)$ and $x\in\rn$,
$$(\phi_t^*f)_a(x):=\sup_{y\in\rn}\frac{|\phi_t\ast f(x+y)|}{(1+|y|/t)^a}
\quad \text{and}\quad (\phi_j^*f)_a(x):=\sup_{y\in\rn}
\frac{|\phi_j\ast f(x+y)|}{(1+2^j|y|)^a}.$$
Then, for all $f\in\cs_\fz'(\rn)$, $a\in(0,\fz)$ and $x\in\rn$,
define
$$g_{a,\ast}(f)(x):=\lf\{\int_0^\fz\lf[(\phi_t^\ast f)_a(x)\r]^2\frac{dt}t\r\}
^{1/2}\quad \mathrm{and}\quad
\sigma_{a,\ast}(f)(x):=\lf\{\sum_{j\in\zz}\lf[(\phi_j^\ast f)_a(x)\r]
^2\r\}^{1/2}.$$

The following conclusion is the first main result of this article.

\begin{theorem}\label{t-equivalen}
Let $p(\cdot)\in\cp(\rn)$ satisfy \eqref{ve1} and \eqref{ve2}.
Then $f\in\vhs$ if and only if $f\in\cs_\fz'(\rn)$ and $S(f)\in\vlp$;
moreover, there exists a positive constant $C$, independent of $f$, such that
$C^{-1}\|S(f)\|_{\vlp}\le\|f\|_{\vhs}\le C\|S(f)\|_{\vlp}.$

The same is true if $S(f)$ is replaced, respectively, by $g(f)$, $g_{a,\ast}(f)$
and $\sigma_{a,\ast}(f)$ with $a\in(n/\min\{p_-,2\},\fz)$.
\end{theorem}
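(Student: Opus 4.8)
The plan is to deduce everything from four estimates: the elementary pointwise bounds $g(f)\le g_{a,\ast}(f)$ and $S(f)\le2^{a}g_{a,\ast}(f)$ (the latter since $(1+|y|/t)^{-a}\ge2^{-a}$ when $|y|<t$); the square-function comparison $\|g_{a,\ast}(f)\|_{\vlp}\ls\|g(f)\|_{\vlp}$ together with $\|\sigma_{a,\ast}(f)\|_{\vlp}\approx\|g_{a,\ast}(f)\|_{\vlp}$; the forward estimate $\|g_{a,\ast}(f)\|_{\vlp}\ls\|f\|_{\vhs}$; and the converse estimate $\|f\|_{\vhs}\ls\|S(f)\|_{\vlp}$ for $f\in\cs_\fz'(\rn)$. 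Granting these, $\|g(f)\|_{\vlp}\approx\|g_{a,\ast}(f)\|_{\vlp}\approx\|\sigma_{a,\ast}(f)\|_{\vlp}$, while $\|S(f)\|_{\vlp}\ls\|g_{a,\ast}(f)\|_{\vlp}\approx\|g(f)\|_{\vlp}$ and $\|f\|_{\vhs}\ls\|S(f)\|_{\vlp}\ls\|g(f)\|_{\vlp}$; combined with $\|g_{a,\ast}(f)\|_{\vlp}\ls\|f\|_{\vhs}$, all four quasi-norms come out comparable to $\|f\|_{\vhs}$. (That $f\in\cs_\fz'(\rn)$ whenever $f\in\vhs$, and that the Calder\'on reconstruction recovers the given distribution, are routine, handled as in \cite{ns12,lyts13} using $p_+<\fz$.) Two known facts are used throughout: the boundedness of $\cm$ on $\vlp$ and its vector-valued Fefferman--Stein extensions, valid under \eqref{ve1} and \eqref{ve2} (see \cite{cfbook}), and the atomic/molecular characterization of $\vhs$ from \cite{ns12}.

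For the square-function comparison, fix (using $a>n/\min\{p_-,2\}$) an exponent $r$ with $n/a<r<\min\{p_-,2\}$. By the support/size conditions \eqref{function1}--\eqref{function2} on $\phi$ and a Calder\'on reproducing formula, the standard Peetre maximal function estimate gives some $\dz\in(0,\fz)$ (arbitrarily large, since $\wh\phi$ vanishes near $0$) such that
\[
[(\phi_t^\ast f)_a(x)]^r\ls\sum_{k\in\zz}2^{-|k|\dz r}\cm\big(|\phi_{2^kt}\ast f|^r\big)(x),\qquad t\in(0,\fz),\ x\in\rn.
\]
Taking the $L^{2/r}((0,\fz),\,dt/t)$-norm in $t$ of both sides, using the triangle inequality there and the scaling identity $\|\cm(|\phi_{2^kt}\ast f|^r)(x)\|_{L^{2/r}(dt/t)}=\|\cm(|\phi_s\ast f|^r)(x)\|_{L^{2/r}(ds/s)}$, one obtains $g_{a,\ast}(f)(x)\ls(\int_0^\fz[\cm(|\phi_s\ast f|^r)(x)]^{2/r}\,ds/s)^{1/2}$. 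Raising to the power $r$ and invoking the continuous Fefferman--Stein inequality on $L^{p(\cdot)/r}(\rn)$ — applicable precisely because $r<\min\{p_-,2\}$ forces $(p(\cdot)/r)_->1$ and $2/r>1$ — yields $\|g_{a,\ast}(f)\|_{\vlp}\ls\|g(f)\|_{\vlp}$. Running the same computation with $\int_0^\fz\cdots\,dt/t$ replaced by $\sum_{j\in\zz}$, plus a routine Fourier-support comparison of the plain discrete and continuous $g$-functions, gives $\|\sigma_{a,\ast}(f)\|_{\vlp}\approx\|g_{a,\ast}(f)\|_{\vlp}$. This is the only step where the hypothesis on $a$ is needed.

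For $\|g_{a,\ast}(f)\|_{\vlp}\ls\|f\|_{\vhs}$, use the atomic decomposition of \cite{ns12}: $f=\sum_j\lz_ja_j$ in $\cs'(\rn)$ (and in $\vhs$) with each $a_j$ a $(p(\cdot),\fz,s)$-atom supported in a ball $B_j$, $s\ge\lfloor n(1/p_--1)\rfloor$, and $\|\{\sum_j(|\lz_j|\chi_{B_j}/\|\chi_{B_j}\|_{\vlp})^{p^\ast}\}^{1/p^\ast}\|_{\vlp}\ls\|f\|_{\vhs}$. Estimating $g_{a,\ast}(a_j)$ on a dilate of $B_j$ via the $L^2$-boundedness of $g_{a,\ast}$ (valid since $a>n/2$) and H\"older's inequality for $\vlp$, and off that dilate via the $s$ vanishing moments of $a_j$ and a Taylor expansion of $\phi_t$, one gets $g_{a,\ast}(a_j)(x)\ls[\cm(\chi_{B_j})(x)]^{\tz}$ with $\tz:=(n+s+1)/n$, which satisfies $\tz>\max\{1,1/p_-\}$ because of the restriction on $s$. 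Hence $g_{a,\ast}(f)\le\sum_j|\lz_j|g_{a,\ast}(a_j)\ls\sum_j|\lz_j|[\cm(\chi_{B_j})]^\tz$, and the standard summation lemma for such series — which uses Remark \ref{r-vlp}(iii) and the vector-valued maximal inequality on $\vlp$, as in \cite{ns12,cw13} — gives $\|g_{a,\ast}(f)\|_{\vlp}\ls\|\{\sum_j(|\lz_j|\chi_{B_j}/\|\chi_{B_j}\|_{\vlp})^{p^\ast}\}^{1/p^\ast}\|_{\vlp}\ls\|f\|_{\vhs}$ (first for finite sums, then by a limiting argument).

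The remaining, and hardest, part is the converse $\|f\|_{\vhs}\ls\|S(f)\|_{\vlp}$. Given $f\in\cs_\fz'(\rn)$ with $S(f)\in\vlp$, pick $\psi\in\cs_\fz(\rn)$ with $\supp\wh\psi\subset\{\xi:\ 1/2\le|\xi|\le2\}$ and $\int_0^\fz\wh\psi(t\xi)\wh\phi(t\xi)\,dt/t=1$ for $\xi\ne0$, so that $f=\int_0^\fz\psi_t\ast(\phi_t\ast f)\,dt/t$ in $\cs_\fz'(\rn)$. Since $S(f)\in\vlp$, the function $F:(y,t)\mapsto\phi_t\ast f(y)$ belongs to the variable-exponent tent space $T_2^{p(\cdot)}(\urn)$, defined by $\|F\|_{T_2^{p(\cdot)}}:=\|\ca(F)\|_{\vlp}$ with $\ca$ the usual conical square function, so that $\ca(F)=S(f)$. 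I would then establish — following \cite{lyts13} and its references — an atomic decomposition $F=\sum_j\lz_jA_j$ in $T_2^{p(\cdot)}(\urn)$ with $A_j$ supported in the tent over a ball $B_j$, $\|A_j\|_{T_2^2(\urn)}\ls|B_j|^{1/2}/\|\chi_{B_j}\|_{\vlp}$, and $\|\{\sum_j(|\lz_j|\chi_{B_j}/\|\chi_{B_j}\|_{\vlp})^{p^\ast}\}^{1/p^\ast}\|_{\vlp}\ls\|S(f)\|_{\vlp}$. Applying the synthesis operator $\pi_\psi(A):=\int_0^\fz\psi_t\ast A(\cdot,t)\,dt/t$, one has $f=\sum_j\lz_j\pi_\psi(A_j)$, and — using $\psi\in\cs_\fz(\rn)$ and the support of $A_j$ — a direct computation shows $\pi_\psi(A_j)$ is, up to a uniform constant, a molecule for $\vhs$ adapted to $B_j$ (or an atom, after truncating $\psi$). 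The molecular decomposition theorem of \cite{ns12} then gives $\|f\|_{\vhs}\ls\|S(f)\|_{\vlp}$. I expect this paragraph to be the main obstacle: one must set up the variable-exponent tent space and prove its atomic decomposition, verify the molecular estimates for $\pi_\psi(A_j)$, and carry out the final summation in $\vhs$ — and every summation step is sensitive to the non-translation-invariance of $\vlp$ (so that $\|\chi_B\|_{\vlp}$ depends on the position, not merely the size, of $B$), which is exactly why \eqref{ve1} and \eqref{ve2} are indispensable and enter through the maximal inequality on $\vlp$ and the auxiliary summation lemmas of \cite{ns12}.
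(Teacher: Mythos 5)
Your plan is viable, but it is a genuinely different and much heavier route than the paper's. The paper never re-derives the link between square functions and $\vhs$ from atoms or tent spaces: it quotes the discrete Littlewood--Paley characterization $\|f\|_{\vhs}\sim\|\sigma(f)\|_{\vlp}$ of Nakai--Sawano (\cite[Theorem 5.7]{ns12}), packaged as Proposition \ref{p-lpd} (with Corollary \ref{c-vhs-p} settling the identification modulo polynomials), and then the entire proof consists of showing $\|g(f)\|_{\vlp}\sim\|S(f)\|_{\vlp}\sim\|g_{a,\ast}(f)\|_{\vlp}\sim\|\sigma(f)\|_{\vlp}\sim\|\sigma_{a,\ast}(f)\|_{\vlp}$ for all $f\in\cs_\fz'(\rn)$ via the Peetre-type maximal estimate (Lemma \ref{l-peetre}) and the vector-valued maximal inequality (Lemma \ref{l-hlmo}); this is the part of your proposal that matches the paper (your $\sigma_{a,\ast}$ step is the paper's \eqref{equivalen-5}, done with \cite{ut11}-type arguments rather than a ``routine Fourier-support comparison''). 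One technical point there: Lemma \ref{l-hlmo} from \cite{cfmp06} is stated for sequences, which is why the paper reduces to sums over $j\in\zz$ of $\int_1^2\cdots\,dt/t$ pieces before applying it; your continuous-parameter Fefferman--Stein inequality needs at least an approximation remark. Your atomic forward estimate and tent-space/molecular converse re-prove what \cite[Theorem 5.7]{ns12} already supplies; what your route buys is independence from that theorem, at the cost of redoing machinery the paper only develops later (for Theorems \ref{t-intrinsic} and \ref{t-cm1}).

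Two steps of your proposal need repair. First, in the forward direction the asserted global pointwise bound $g_{a,\ast}(a_j)(x)\ls[\cm(\chi_{B_j})(x)]^{(n+s+1)/n}$ cannot hold on the dilate of $B_j$ (and it also drops the normalization $\|\chi_{B_j}\|_{\vlp}^{-1}$): an $L^\fz$-normalized atom may have a jump discontinuity, and then already $g(a_j)$ has a logarithmic singularity there, so near $B_j$ only an $L^2$ (or $L^q$) bound is available. The near-ball contribution must be kept in $L^q$-average form and summed via \cite[Lemma 4.11]{ns12} (Lemma \ref{l-ineq}), exactly as in the estimate of ${\rm I}_1$ in the paper's proof of Theorem \ref{t-intrinsic}; the maximal-function bound is valid only off the dilate. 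Second, in the converse you invoke ``the molecular decomposition theorem of \cite{ns12}'', but \cite{ns12} provides only an atomic decomposition, whose reconstruction is quoted for exponents $q$ as in Lemma \ref{l-ineq} (i.e., large); with tent atoms normalized only in $T_2^2(\urn)$ the synthesized pieces $\pi_\psi(A_j)$ are merely $L^2$-normalized, and that reconstruction does not directly apply. The fix is to prove the tent-space decomposition with $(p(\cdot),\fz)$-atoms, i.e. $T_2^q$-normalized for every $q\in(1,\fz)$ --- precisely Theorem \ref{t-tentad} of the paper --- and to choose $\psi$ compactly supported with sufficiently many vanishing moments in the reproducing formula, so that each $\pi_\psi(A_j)$ is, up to a uniform constant, a $(p(\cdot),q,s)$-atom for an admissible $q$; then the summation closes with Lemma \ref{l-ineq}. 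With these two repairs your argument works, though it remains considerably longer than the paper's.
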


\begin{corollary}\label{c-equivalent}
Let $p(\cdot)\in\cp(\rn)$ and $\lz\in(1+2/{\min\{2,p_-\}},\fz)$. Then
$f\in\vhs$ if and only if $f\in\cs_\fz'(\rn)$ and $g_\lz^\ast(f)\in\vlp$;
moreover, there exists a positive constant $C$, independent of $f$, such that
$C^{-1}\|g_\lz^\ast(f)\|_{\vlp}\le\|f\|_{\vhs}\le C\|g_\lz^\ast(f)\|_{\vlp}.$
\end{corollary}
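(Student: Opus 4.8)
The plan is to derive Corollary~\ref{c-equivalent} directly from Theorem~\ref{t-equivalen} by sandwiching $g_\lz^\ast(f)$ between the Lusin area function $S(f)$ (from below, pointwise) and the maximal square function $g_{a,\ast}(f)$ for a suitably chosen auxiliary exponent $a$ (from above, pointwise); no genuinely new analysis on $\vhs$ is required, and the regularity hypotheses \eqref{ve1} and \eqref{ve2} needed to invoke Theorem~\ref{t-equivalen} are understood to be in force.

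For the ``if'' part I would first record the elementary pointwise domination $S(f)\le C_\lz\,g_\lz^\ast(f)$: on the cone $\{(y,t):\ |x-y|<t\}$ one has $t/(t+|x-y|)>1/2$, hence $(t/(t+|x-y|))^{\lz n}>2^{-\lz n}$, so restricting the $(y,t)$-integral in \eqref{1gx} to this cone yields $S(f)(x)\le 2^{\lz n/2}g_\lz^\ast(f)(x)$ for every $x\in\rn$ and every $\lz\in(0,\fz)$. Consequently, if $f\in\cs_\fz'(\rn)$ and $g_\lz^\ast(f)\in\vlp$, then $S(f)\in\vlp$ with $\|S(f)\|_{\vlp}\le C\|g_\lz^\ast(f)\|_{\vlp}$, and Theorem~\ref{t-equivalen} gives $f\in\vhs$ together with $\|f\|_{\vhs}\le C\|g_\lz^\ast(f)\|_{\vlp}$.

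For the ``only if'' part, suppose $f\in\vhs$; then $f\in\cs_\fz'(\rn)$ by Theorem~\ref{t-equivalen}. Since $\lz>1+2/\min\{2,p_-\}$, the interval $(n/\min\{p_-,2\},\,n(\lz-1)/2)$ is non-empty; fix $a$ in it. The pointwise bound I would use is the standard one coming straight from the definition of $(\phi_t^\ast f)_a$: for all $x,y\in\rn$ and $t\in(0,\fz)$, $|\phi_t\ast f(y)|\le(1+|y-x|/t)^a(\phi_t^\ast f)_a(x)$. Substituting this into \eqref{1gx} and then making the change of variables $y=x+tz$ separates the $t$- and $z$-integrals and reduces the inner integral to $\int_\rn(1+|z|)^{2a-\lz n}\,dz$, which converges precisely because $2a<n(\lz-1)$. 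This produces $g_\lz^\ast(f)(x)\le C_{a,\lz}\,g_{a,\ast}(f)(x)$ for all $x\in\rn$, and then the last assertion of Theorem~\ref{t-equivalen} (applicable since $a>n/\min\{p_-,2\}$) gives $\|g_\lz^\ast(f)\|_{\vlp}\le C_{a,\lz}\|g_{a,\ast}(f)\|_{\vlp}\le C\|f\|_{\vhs}$. Combining the two directions yields the asserted equivalence of quasi-norms.

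The argument is routine once Theorem~\ref{t-equivalen} is available; the only point demanding attention is the parameter arithmetic, namely checking that one can choose a single $a$ which is simultaneously large enough for $g_{a,\ast}$ to characterize $\vhs$ and small enough for $\int_\rn(1+|z|)^{2a-\lz n}\,dz$ to be finite. That this is possible is exactly what the hypothesis $\lz>1+2/\min\{2,p_-\}$ encodes, so there is no real obstacle beyond bookkeeping.
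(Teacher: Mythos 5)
Your proposal is correct and follows essentially the same route as the paper: the lower bound $S(f)\lesssim g_\lambda^\ast(f)$ combined with Theorem \ref{t-equivalen} for the ``if'' direction, and for the converse the choice of $a\in(n/\min\{p_-,2\},\,n(\lambda-1)/2)$ (non-empty exactly because $\lambda>1+2/\min\{2,p_-\}$) followed by the pointwise bound $g_\lambda^\ast(f)\lesssim g_{a,\ast}(f)$ via the convergent integral $\int_{\mathbb{R}^n}(1+|z|)^{2a-\lambda n}\,dz$. The parameter bookkeeping you highlight is precisely the content of the paper's argument, so nothing is missing.
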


\begin{remark}\label{r-equivalen}
\begin{itemize}
\item[(i)] We point out that the conclusion of Theorem \ref{t-equivalen} is understood in
the following sense:
if $f\in\vhs$, then $f\in \cs_\fz'(\rn)$ and there exists a positive
constant $C$ such that, for all $f\in\vhs$,
$\|S(f)\|_{\vlp}\le C\|f\|_{\vhs}$;
conversely, if $f\in\cs_\fz'(\rn)$ and $S(f)\in\vlp$, then there exists a unique
extension $\wz f \in\cs'(\rn)$ such that, for all $h\in\cs_\fz(\rn)$,
$\langle \wz f,h\rangle=\langle f,h\rangle$ and
$\|\wz f\|_{\vhs}\le C\|S(f)\|_{\vlp}$
with $C$ being a positive constant independent of $f$.
In this sense, we identify $f$ with $\wz f$.

\item[(ii)] Recall that, Hou et al. \cite{hyy} characterized the
Musielak-Orlicz Hardy space $H^{\vz}(\rn)$, which was introduced by Ky \cite{ky11},
via the Lusin area function, and Liang et al. \cite{lhy} established
the Littlewood-Paley $g$-function and the $g_\lz^\ast$-function characterizations
of $H^{\vz}(\rn)$. Observe that, when
\begin{equation}\label{vz}
\vz(x,t):=t^{p(x)}\quad\mathrm{for\ all}\quad x\in\rn\quad
\mathrm{and}\quad t\in[0,\fz),
\end{equation}
then $H^\vz(\rn)=H^{p(\cdot)}(\rn)$. However, a general
Musielak-Orlicz function $\vz$ satisfying all the assumptions
in \cite{ky11} (and hence \cite{hyy,lhy}) may not have the form
as in \eqref{vz}. On the other hand, it was proved in
\cite[Remark 2.23(iii)]{yyz13} that there exists an exponent function
$p(\cdot)$ satisfying \eqref{ve1} and \eqref{ve2},
but $t^{p(\cdot)}$ is not a uniformly Muckenhoupt weight,
which was required in \cite{ky11} (and hence \cite{hyy,lhy}).
Thus, the Musielak-Orlicz Hardy space $H^\vz(\rn)$
in \cite{ky11} (and hence in \cite{hyy,lhy}) and the variable
exponent Hardy space $H^{p(\cdot)}(\rn)$ in \cite{ns12}
(and hence in the present article) can not cover each other.

Moreover, Liang et al. \cite[Theorem 4.8]{lhy} established the $g_\lz^\ast$-function
characterization of the Musielak-Orlicz Hardy space $H^\vz(\rn)$
with the best known range for $\lz$.
In particular, in the case of the classical Hardy space $H^p(\rn)$,
$\lz\in(2/{\min\{p,2\}},\fz)$; see, for example, \cite[p.\,221, Corollary 7.4]{fs82}
and \cite[p.\,91, Theorem 2]{st70}.
However, it is still unclear whether the $g_\lz^\ast$-function,
when $\lz\in(2/{\min\{p_-,2\}},1+2/{\min\{p_-,2\}}]$,
can characterize $\vhs$ or not,
since the method used in \cite[Theorem 4.8]{lhy} strongly depends on the properties of
uniformly Muckenhoupt weights, which are not satisfied by $t^{p(\cdot)}$.

Indeed, a key fact that used in the proof of \cite[Theorem 4.8]{lhy},
which may not hold in the present setting, is that,
if $\vz$ is a Musielak-Orlicz function as in \cite{lhy}, then
there exists a positive constant $C$ such that,
for all $\lz\in(0,\fz)$, $\alpha\in(0,1)$ and measurable set $E\subset\rn$,
\begin{equation*}
\int_{U(E;\alpha)}\vz(x,\lz)\,dx\le C\alpha^{nq}\int_E\vz(x,\lz)\,dx,
\end{equation*}
where $U(E;\alpha):=\{x\in\rn:\ \cm(\chi_E)(x)>\alpha\}$ and $q\in[1,\fz)$
is the uniformly Muckenhoupt weight index of $\vz$.
To see this, following \cite[Example 1.3]{ns12}, for all $x\in\rr$, let
$$p(x):=\max\lf\{1-e^{3-|x|},\min\lf(6/5,\max\{1/2,3/2-x^2\}\r)\r\}.$$
Then $p(\cdot)$ satisfies \eqref{ve1} and \eqref{ve2}.
Now, let $E:=(1,2)$, then, for all $x\in\rr$,
$$\cm(\chi_E)(x)=\chi_E(x)+\frac1{1+2|x-3/2|}\chi_{\rr\setminus E}(x).$$
It is easy to see that, for all $\lz\in(0,\fz)$,
$\int_E\lz^{p(x)}\,dx=\lz^{1/2}$
and
$$\int_{U(E;1/11)}\lz^{p(x)}\,dx=\int_{-\frac72}^{-\frac{13}2}\lz^{p(x)}\,dx
>\int_E\lz^{p(x)}\,dx+\int_{-\frac12}^{\frac12}\lz^{p(x)}\,dx=\lz^{1/2}+\lz^{6/5}.$$
Thus, we find that
$$\lim_{\lz\to\fz}\frac{\int_{U(E;1/11)}\lz^{p(x)}\,dx}{\int_E\lz^{p(x)}\,dx}=\fz,$$
which implies that
there does not exist a positive constant $C$, independent of $\lz$,
such that,
\begin{equation*}
\int_{U(E;1/11)}\lz^{p(x)}\,dx\le C\int_E\lz^{p(x)}\,dx.
\end{equation*}
Thus, the method used in the proof of \cite[Theorem 4.8]{lhy} is not suitable for the present setting.
\end{itemize}
\end{remark}

For any $s\in\zz_+$, $C^s(\rn)$ denotes the \emph{set of all functions having
continuous classical derivatives up to order not more than $s$}.
For $\alpha\in(0,1]$ and $s\in\zz_+$, let $\ccc_{\alpha,s}(\rn)$ be the
\emph{family of functions $\phi\in C^s(\rn)$} such that
supp\,$\phi\subset\{x\in\rn:\ |x|\le1\}$, $\int_\rn\phi(x)x^\gamma\,dx=0$
for all $\gamma\in\zz_+^n$ and $|\gamma|\le s$, and,
for all $x_1,\ x_2\in\rn$ and $\nu\in\zz_+^n$ with $|\nu|=s$,
\begin{equation}\label{condition1}
|D^\nu\phi(x_1)-D^\nu\phi(x_2)|\le|x_1-x_2|^\alpha.
\end{equation}

For all $f\in L_{\rm loc}^1(\rn)$ and $(y,t)\in\rr_+^{n+1}:=\rn\times(0,\fz)$,
let
$$A_{\alpha,s}(f)(y,t):=\sup_{\phi\in\ccc_{\alpha,s}(\rn)}|f\ast \phi_t(y)|.$$
Then, the \emph{intrinsic $g$-function},
the \emph{intrinsic Lusin area integral} and
the \emph{intrinsic $g_\lz^\ast$-function} of $f$ are, respectively,
defined by setting, for all $x\in\rn$ and $\lz\in(0,\fz)$,
\begin{equation*}
g_{\alpha,s}(f)(x):=\lf\{\int_0^\fz\lf[A_{\alpha,s}(f)(x,t)\r]^2\,
\frac{dt}t\r\}^{1/2},
\end{equation*}
\begin{equation*}
S_{\alpha,s}(f)(x):=\lf\{\int_0^\fz\int_{\{y\in\rn:\ |y-x|<t\}}
\lf[A_{\alpha,s}(f)(y,t)\r]^2\,\frac{dy\,dt}{t^{n+1}}\r\}^{1/2}
\end{equation*}
and
\begin{equation*}
g_{\lz,\alpha,s}^\ast(f)(x)
:=\lf\{\int_0^\fz\int_\rn\lf(\frac t{t+|x-y|}\r)^{\lz n}
[A_{\alpha,s}(f)(y,t)]^2\,\frac{dy\,dt}{t^{n+1}}\r\}^{1/2}.
\end{equation*}

We also recall another kind of similar-looking square functions,
defined via convolutions with kernels that have unbounded supports.
For $\alpha\in(0,1]$, $s\in\zz_+$ and $\epsilon\in(0,\fz)$, let
$\ccc_{(\alpha,\epsilon),s}(\rn)$ be the \emph{family of functions}
$\phi\in C^s(\rn)$
such that, for all $x\in\rn$, $\gamma\in\zz_+^n$ and $|\gamma|\le s$,
$|D^\gamma\phi(x)|\le(1+|x|)^{-n-\epsilon}$, $\int_\rn\phi(x)x^\gamma\,dx=0$
and, for all $x_1,\ x_2\in\rn$, $\nu\in\zz_+^n$ and $|\nu|=s$,
\begin{equation}\label{intrinsic-2}
|D^\nu\phi(x_1)-D^\nu\phi(x_2)|
\le|x_1-x_2|^\alpha\lf[(1+|x_1|)^{-n-\epsilon}+(1+|x_2|)^{-n-\epsilon}\r].
\end{equation}
Remark that, in what follows, the parameter $\epsilon$ usually has to be
chosen to be large enough.
For all $f$ satisfying
\begin{equation}\label{intrinsic-1}
|f(\cdot)|(1+|\cdot|)^{-n-\epsilon}\in L^1(\rn)
\end{equation}
and $(y,t)\in\urn$, let
\begin{equation}\label{intrinsic-3}
\wz A_{(\alpha,\epsilon),s}(f)(y,t):=\sup_{\phi\in\ccc_{(\az,\ez),s}(\rn)}
|f\ast\phi_t(y)|.
\end{equation}
Then, for all $x\in\rn$ and $\lz\in(0,\fz)$, we let
\begin{equation*}
\wz g_{(\az,\ez),s}(f)(x)
:=\lf\{\int_0^\fz\lf[\wz A_{(\alpha,\epsilon),s}(f)(x,t)\r]^2\,
\frac{dt}t\r\}^{1/2},
\end{equation*}
\begin{equation*}
\wz S_{(\az,\ez),s}(f)(x):=\lf\{\int_0^\fz\int_{\{y\in\rn:\ |y-x|<t\}}
\lf[\wz A_{(\alpha,\epsilon),s}(f)(y,t)\r]^2\,\frac{dy\,dt}{t^{n+1}}\r\}^{1/2}
\end{equation*}
and
\begin{equation*}
\wz g_{\lz,(\az,\ez),s}^\ast(f)(x)
:=\lf\{\int_0^\fz\int_\rn\lf(\frac t{t+|x-y|}\r)^{\lz n}
[\wz A_{(\alpha,\epsilon),s}(f)(y,t)]^2\,\frac{dy\,dt}{t^{n+1}}\r\}^{1/2}.
\end{equation*}
These intrinsic square functions, when $s=0$, were original introduced by
Wilson \cite{wilson07}, which were further generalized to $s\in\zz_+$ by
Liang and Yang \cite{lyts13}.

In what follows, for any $r\in\zz_+$, we use $\cp_r(\rn)$ to denote
the \emph{set of all polynomials on $\rn$ with order not more than $r$}.

We now recall the notion of the Campanato space with variable exponent,
which was introduced by Nakai and Sawano in \cite{ns12}.
\begin{definition}\label{d-cps}
Let $p(\cdot)\in\cp(\rn)$, $s$ be a nonnegative integer and $q\in[1,\fz)$. Then
the \emph{Campanato space} $\cl_{q,p(\cdot),s}(\rn)$ is defined to be the set of
all $f\in L_{\rm loc}^q(\rn)$ such that
$$\|f\|_{\cps}:=\sup_{Q\subset\rn}{\frac{|Q|}{\|\chi_Q\|_{\vlp}}}
\lf[\frac1{|Q|}\int_Q|f(x)-P_Q^sf(x)|^q\,dx\r]^{\frac1q}<\fz,$$
where the supremum is taken over all cubes $Q$ of $\rn$ and $P_Q^sg$ denotes the
\emph{unique polynomial $P\in\cp_s(\rn)$} such that, for all $h\in\cp_s(\rn)$,
$\int_Q[f(x)-P(x)]h(x)\,dx=0$.
\end{definition}

Now we state the second main result of this article. Recall that $f\in\cs'(\rn)$ is said to \emph{vanish weakly at infinity},
if, for every $\phi\in\cs(\rn)$, $f\ast \phi_t\to0$ in $\cs'(\rn)$
as $t\to\fz$; see, for example, \cite[p.\,50]{fs82}.

\begin{theorem}\label{t-intrinsic}
Let $p(\cdot)\in \cp(\rn)$ satisfy \eqref{ve1}, \eqref{ve2} and $p_+\in(0,1]$.
Assume that $\alpha\in(0,1]$, $s\in\zz_+$ and
$p_-\in( n/{n+\alpha+s},1]$.
Then $f\in\vhs$ if and only if $f\in(\cl_{1,p(\cdot),s}(\rn))^\ast$,
the dual space of
$\cl_{1,p(\cdot),s}(\rn)$, $f$ vanishes weakly at infinity and
$g_{\az,s}(f)\in\vlp$;
moreover, it holds true that
$$\frac 1C\|g_{\alpha,s}(f)\|_{\vlp}\le \|f\|_{\vhs}
\le C\|g_{\alpha,s}(f)\|_{\vlp}$$
with $C$ being a positive constant independent of $f$.

The same is true if $g_{\az,s}(f)$ is replaced by $\wz g_{(\az,\ez),s}(f)$
with $\ez\in(\az+s,\fz)$.
\end{theorem}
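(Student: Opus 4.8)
The plan is to reduce the intrinsic square function characterization to the already-established Lusin area function characterization (Theorem \ref{t-equivalen}) by comparing $g_{\alpha,s}(f)$ with $S(f)$ pointwise, modulo harmless constants and cone-aperture changes. The first half, namely $\|g_{\alpha,s}(f)\|_{\vlp}\lesssim\|f\|_{\vhs}$, should come from the trivial pointwise bound: up to a normalizing constant the function $\phi$ in \eqref{function1}--\eqref{function2} lies in a dilate of $\ccc_{\alpha,s}(\rn)$ after truncation is not literally true, so instead I would argue that $g(f)(x)\lesssim g_{\alpha,s}(f)(x)$ is the wrong direction; rather, the standard fact (due to Wilson \cite{wilson07} and extended in \cite{lyts13}) is that $g_{\alpha,s}(f)(x)\lesssim g_{a,\ast}(f)(x)$ pointwise for suitable $a$, because any $\phi\in\ccc_{\alpha,s}(\rn)$ can be expanded against the fixed Littlewood--Paley resolution and the $(\alpha,s)$-smoothness/cancellation controls the resulting coefficients. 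Combining this with the $\sigma_{a,\ast}$/$g_{a,\ast}$ part of Theorem \ref{t-equivalen} (valid for $a\in(n/\min\{p_-,2\},\fz)$, which is compatible with $p_-\in(n/(n+\alpha+s),1]$ after checking the arithmetic) gives $\|g_{\alpha,s}(f)\|_{\vlp}\lesssim\|g_{a,\ast}(f)\|_{\vlp}\lesssim\|f\|_{\vhs}$, and along the way one sees $f$ must vanish weakly at infinity and extend to an element of $(\cl_{1,p(\cdot),s}(\rn))^\ast$ via the atomic/molecular theory of $\vhs$ from \cite{ns12}.

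For the converse, $\|f\|_{\vhs}\lesssim\|g_{\alpha,s}(f)\|_{\vlp}$, the key pointwise comparison is the reverse one: $g(f)(x)\lesssim g_{\alpha,s}(f)(x)$, which holds because the single fixed kernel $\phi$ (suitably renormalized) belongs to $\ccc_{\alpha,s}(\rn)$ — indeed $\phi\in\cs_\fz(\rn)$ has all moments vanishing, is $C^\infty$ with rapid decay, and after multiplying by a constant its restriction obeys \eqref{condition1}; more carefully, one writes $\phi=\sum_k c_k\eta^{(k)}$ with each $\eta^{(k)}\in\ccc_{\alpha,s}(\rn)$ and $\sum|c_k|<\fz$, so that $|\phi_t\ast f(x)|\lesssim A_{\alpha,s}(f)(x,t)$. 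This yields $g(f)\lesssim g_{\alpha,s}(f)$ pointwise, hence $\|g(f)\|_{\vlp}\lesssim\|g_{\alpha,s}(f)\|_{\vlp}$, and then Theorem \ref{t-equivalen} (the $g(f)$ version) finishes it, once one knows a priori that $f\in\cs_\fz'(\rn)$ — which is exactly where the hypotheses $f\in(\cl_{1,p(\cdot),s}(\rn))^\ast$ and ``$f$ vanishes weakly at infinity'' are used, since $\cl_{1,p(\cdot),s}(\rn)$ is (by \cite{ns12}) the dual of $\vhs$ and contains $\cs_\fz(\rn)$ with continuous embedding, so $f$ restricts to a well-defined tempered distribution modulo polynomials, and vanishing weakly at infinity kills the polynomial ambiguity.

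The statement for $\wz g_{(\az,\ez),s}(f)$ with $\ez\in(\az+s,\fz)$ follows the same scheme, using instead the elementary pointwise equivalence $g_{\alpha,s}(f)(x)\le\wz g_{(\alpha,\epsilon),s}(f)(x)$ (since $\ccc_{\alpha,s}(\rn)$ after normalization embeds into $\ccc_{(\alpha,\epsilon),s}(\rn)$, the compactly supported kernels being a special case of the decaying ones) together with a reverse estimate $\wz g_{(\alpha,\epsilon),s}(f)(x)\lesssim g_{a,\ast}(f)(x)$ valid precisely when $\epsilon>\alpha+s$, obtained by decomposing a decaying kernel $\phi\in\ccc_{(\alpha,\epsilon),s}(\rn)$ into a sum of dilated, compactly supported pieces $2^{-k\epsilon'}\eta^{(k)}(2^{-k}\cdot)$ with $\eta^{(k)}\in\ccc_{\alpha,s}(\rn)$ and $\sum 2^{-k\epsilon'}<\fz$ — this geometric-series convergence is exactly what forces $\epsilon>\alpha+s$. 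The main obstacle, and the step I expect to require the most care, is this last decomposition together with making the constants in ``$\phi/\text{const}\in\ccc_{\alpha,s}(\rn)$'' and in the atom-by-atom expansions uniform, i.e. verifying that the Calderón-type reproducing-formula expansions of an arbitrary $(\alpha,s)$- or $(\alpha,\epsilon),s$-kernel against the fixed $\phi$ have $\ell^1$-summable coefficients with bounds depending only on $n,\alpha,s,\epsilon$; once that is in hand, everything else is a transcription of the classical argument (as in \cite{wilson07,lyts13,lhy}) into the $\vlp$-setting, where Remark \ref{r-vlp}(iii) and the boundedness of $\cm$ on $\vlp$ (available under \eqref{ve1}, \eqref{ve2}) replace the weighted estimates, and no Muckenhoupt-type hypothesis on $t^{p(\cdot)}$ is needed because we only ever pass through the already-proved Theorem \ref{t-equivalen}.
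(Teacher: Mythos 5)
Your sufficiency direction (deducing $f\in\vhs$ and $\|f\|_{\vhs}\ls\|g_{\az,s}(f)\|_{\vlp}$ from the three hypotheses) is essentially the paper's argument: a suitable renormalization of the fixed kernel $\phi$ belongs to $\ccc_{(\az,\ez),s}(\rn)$, so $g(f)\ls\wz g_{(\az,\ez),s}(f)\sim g_{\az,s}(f)$ pointwise, and Theorem \ref{t-equivalen} together with the weak vanishing at infinity (which removes the polynomial ambiguity, via Corollary \ref{c-vhs-p}) finishes it; the continuous embedding of $\ccc_{(\az,\ez),s}(\rn)$ and $\cs(\rn)$ into $\cl_{1,p(\cdot),s}(\rn)$ (Lemma \ref{l-sembed}) makes every object well defined. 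This part is fine.

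The necessity direction, however, has a genuine gap. You propose the pointwise bound $g_{\az,s}(f)(x)\ls g_{a,\ast}(f)(x)$ for some $a\in(n/\min\{p_-,2\},\fz)$, obtained by expanding an arbitrary $\phi\in\ccc_{\az,s}(\rn)$ against the fixed Littlewood--Paley resolution. Carry that expansion out: writing $\phi_t=\sum_j\phi_t\ast\psi_{2^{-j}}\ast\wz\psi_{2^{-j}}$, the high-frequency terms ($2^{-j}\le t$) can only be estimated through the $C^{s,\az}$-smoothness of $\phi$, which gives $|\phi_t\ast\wz\psi_{2^{-j}}(z)|\ls t^{-n}(2^{-j}/t)^{s+\az}$ on a set of diameter about $t$; pairing this with the Peetre weight $(1+2^j|z|)^{a}$ produces an extra factor $(2^jt)^{a}$, so the $j$-th coefficient has size $(2^jt)^{a-\az-s}$ and the sum over $2^jt\ge1$ converges only if $a<\az+s$. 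Since $p_+\le1$ forces $\min\{p_-,2\}=p_-$ and hence $a>n/p_-\ge n$, the two constraints are incompatible whenever $\az+s\le n$ (in particular always when $s=0$). This is exactly Wilson's point that the intrinsic square function dominates individual square functions but is not essentially controlled pointwise by any fixed one, so the inequality $\|g_{\az,s}(f)\|_{\vlp}\ls\|f\|_{\vhs}$ cannot be obtained by routing through Theorem \ref{t-equivalen} via a pointwise comparison. The paper instead uses the atomic decomposition of $\vhs$ (Lemma \ref{l-equi}): for a $(p(\cdot),q,s)$-atom $a$ supported in $Q$ one bounds $g_{\az,s}(a)$ on $2\sqrt nQ$ by the $L^q$-boundedness of $g_{\az,s}$ (Lemma \ref{l-intri-bou}) and, off $2\sqrt nQ$, by $[\cm(\chi_Q)]^{(n+\az+s)/n}\|\chi_Q\|_{\vlp}^{-1}$ using the vanishing moments and \eqref{condition1}; the pieces are then summed with Lemma \ref{l-ineq} and the vector-valued maximal inequality (Lemma \ref{l-hlmo}), and the hypothesis $p_->n/(n+\az+s)$ is used precisely at that last step. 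You would need to supply this (or an equivalent averaged) argument; the pointwise route you sketch does not close.
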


Observe that, for all $x\in\rn$, $S_{\az,s}(f)(x)$ and
$g_{\az,s}(f)(x)$ as well as
$\wz S_{(\az,\ez),s}(f)(x)$ and $\wz g_{(\az,\ez),s}(f)(x)$
are pointwise comparable
 (see \cite[Proposition 2.4]{lyts13}),
which, together with Theorem \ref{t-intrinsic}, immediately implies
the following Corollary \ref{c-intrinsic}.

\begin{corollary}\label{c-intrinsic}
Let $p(\cdot)\in \cp(\rn)$ satisfy \eqref{ve1}, \eqref{ve2} and $p_+\in(0,1]$.
Assume that $\alpha\in(0,1]$, $s\in\zz_+$ and
$p_-\in( n/{(n+\alpha+s)},1]$.
Then $f\in\vhs$ if and only if $f\in(\cl_{1,p(\cdot),s}(\rn))^\ast$,
$f$ vanishes weakly at infinity and $S_{\az,s}(f)\in\vlp$;
moreover, it holds true that
$$\frac 1C\|S_{\alpha,s}(f)\|_{\vlp}\le \|f\|_{\vhs}
\le C\|S_{\alpha,s}(f)\|_{\vlp}$$
with $C$ being a positive constant independent of $f$.

The same is true if $S_{\az,s}(f)$ is replaced by $\wz S_{(\az,\ez),s}(f)$
with $\ez\in(\az+s,\fz)$.
\end{corollary}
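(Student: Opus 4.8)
The plan is to derive Corollary \ref{c-intrinsic} directly from Theorem \ref{t-intrinsic}, combined with the pointwise equivalence between the intrinsic Lusin area function and the intrinsic $g$-function recorded in \cite[Proposition 2.4]{lyts13}. First I would invoke that proposition: there exists a positive constant $C$, depending only on $n$, $\az$ and $s$, such that, for all $f$ for which the relevant quantities make sense and all $x\in\rn$,
$$C^{-1}g_{\az,s}(f)(x)\le S_{\az,s}(f)(x)\le C g_{\az,s}(f)(x),$$
and, for each $\ez\in(0,\fz)$, a positive constant $\wz C$, depending only on $n$, $\az$, $s$ and $\ez$, such that, for all $f$ satisfying \eqref{intrinsic-1} and all $x\in\rn$,
$$\wz C^{-1}\wz g_{(\az,\ez),s}(f)(x)\le \wz S_{(\az,\ez),s}(f)(x)\le \wz C\,\wz g_{(\az,\ez),s}(f)(x).$$

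Next I would promote these pointwise comparisons to comparisons of the $\vlp$ quasi-norms. Since $\|\cdot\|_{\vlp}$ is positively homogeneous by Remark \ref{r-vlp}(ii) and monotone (if $0\le h_1\le h_2$ almost everywhere, then $\|h_1\|_{\vlp}\le\|h_2\|_{\vlp}$, which is immediate from the definition of $\|\cdot\|_{\vlp}$ via the modular $\int_\rn[\,\cdot\,]^{p(x)}\,dx$), the displayed estimates yield
$$C^{-1}\|g_{\az,s}(f)\|_{\vlp}\le\|S_{\az,s}(f)\|_{\vlp}\le C\|g_{\az,s}(f)\|_{\vlp}$$
and the analogous inequalities for $\wz g_{(\az,\ez),s}(f)$ and $\wz S_{(\az,\ez),s}(f)$. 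In particular, $S_{\az,s}(f)\in\vlp$ if and only if $g_{\az,s}(f)\in\vlp$, and $\wz S_{(\az,\ez),s}(f)\in\vlp$ if and only if $\wz g_{(\az,\ez),s}(f)\in\vlp$, with the corresponding quasi-norms mutually comparable and the comparison constants independent of $f$.

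Finally, I would note that the hypotheses on $p(\cdot)$, $\az$ and $s$ imposed in Corollary \ref{c-intrinsic} coincide exactly with those of Theorem \ref{t-intrinsic}. Applying the first assertion of Theorem \ref{t-intrinsic}, $f\in\vhs$ if and only if $f\in(\cl_{1,p(\cdot),s}(\rn))^\ast$, $f$ vanishes weakly at infinity and $g_{\az,s}(f)\in\vlp$, with $\frac1C\|g_{\az,s}(f)\|_{\vlp}\le\|f\|_{\vhs}\le C\|g_{\az,s}(f)\|_{\vlp}$; substituting the equivalence of $\|g_{\az,s}(f)\|_{\vlp}$ and $\|S_{\az,s}(f)\|_{\vlp}$ obtained in the previous step and relabelling the constant gives the $S_{\az,s}$ part of the corollary. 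For the last sentence I would instead use the ``same is true'' assertion of Theorem \ref{t-intrinsic} with $\ez\in(\az+s,\fz)$ together with the equivalence of $\|\wz g_{(\az,\ez),s}(f)\|_{\vlp}$ and $\|\wz S_{(\az,\ez),s}(f)\|_{\vlp}$. Since the argument is merely a concatenation of a cited pointwise estimate with an already proved theorem, there is no genuine obstacle; the only point requiring a little care is to track that every comparison constant produced along the way is independent of $f$, so that the final quasi-norm equivalence in the corollary holds with a constant independent of $f$.
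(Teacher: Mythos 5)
Your proposal is correct and follows exactly the paper's route: the paper also deduces Corollary \ref{c-intrinsic} from Theorem \ref{t-intrinsic} together with the pointwise comparability of $S_{\az,s}(f)$ with $g_{\az,s}(f)$ (and of $\wz S_{(\az,\ez),s}(f)$ with $\wz g_{(\az,\ez),s}(f)$) cited from \cite[Proposition 2.4]{lyts13}. Your extra remarks on the monotonicity and homogeneity of $\|\cdot\|_{\vlp}$ just make explicit what the paper treats as immediate.
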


\begin{theorem}\label{t-intrinsic-1}
Let $p(\cdot)\in \cp(\rn)$ satisfy \eqref{ve1}, \eqref{ve2} and $p_+\in(0,1]$.
Assume that $\alpha\in(0,1]$, $s\in\zz_+$,
$p_-\in( n/{(n+\alpha+s)},1]$ and $\lz\in(3+2(\az+s)/n,\fz)$.
Then $f\in\vhs$ if and only if $f\in(\cl_{1,p(\cdot),s}(\rn))^\ast$,
$f$ vanishes weakly at infinity and $g_{\lz,\az,s}^\ast(f)\in\vlp$;
moreover, it holds true that
$$\frac 1C\|g_{\lz,\alpha,s}^\ast(f)\|_{\vlp}\le \|f\|_{\vhs}
\le C\|g_{\lz,\alpha,s}^\ast(f)\|_{\vlp}$$
with $C$ being a positive constant independent of $f$.

The same is true if $g^\ast_{\lz,\az,s}(f)$ is replaced by
$\wz g^\ast_{\lz,(\az,\ez),s}(f)$
with $\ez\in(\az+s,\fz)$.
\end{theorem}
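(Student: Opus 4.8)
The plan is to deduce Theorem \ref{t-intrinsic-1} by combining the intrinsic Lusin area function characterization already recorded in Corollary \ref{c-intrinsic} with a good-$\lambda$ / subordination argument comparing $g_{\lz,\az,s}^\ast(f)$ to $S_{\az,s}(f)$ directly in the scale of variable exponent Lebesgue spaces. The easy direction is the pointwise bound $S_{\az,s}(f)(x)\le C\, g_{\lz,\az,s}^\ast(f)(x)$, valid for every $\lz\in(0,\fz)$ and every $x\in\rn$, obtained simply by restricting the integration in $g_{\lz,\az,s}^\ast$ to the cone $\{(y,t):|y-x|<t\}$, on which $(t/(t+|x-y|))^{\lz n}\gtrsim1$. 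Hence $\|S_{\az,s}(f)\|_{\vlp}\le C\|g_{\lz,\az,s}^\ast(f)\|_{\vlp}$, and if moreover $g_{\lz,\az,s}^\ast(f)\in\vlp$ together with the membership $f\in(\cl_{1,p(\cdot),s}(\rn))^\ast$ and the weak vanishing at infinity, Corollary \ref{c-intrinsic} yields $f\in\vhs$ with $\|f\|_{\vhs}\le C\|g_{\lz,\az,s}^\ast(f)\|_{\vlp}$.

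For the reverse inequality, the main work is to prove $\|g_{\lz,\az,s}^\ast(f)\|_{\vlp}\le C\|S_{\az,s}(f)\|_{\vlp}$ when $\lz\in(3+2(\az+s)/n,\fz)$; combined with Corollary \ref{c-intrinsic} this gives $\|g_{\lz,\az,s}^\ast(f)\|_{\vlp}\le C\|f\|_{\vhs}$ for $f\in\vhs$, and also shows that every $f\in\vhs$ vanishes weakly at infinity and lies in $(\cl_{1,p(\cdot),s}(\rn))^\ast$ (these last two facts are part of Corollary \ref{c-intrinsic}). The standard route is to slice the half-space into the ``on-diagonal'' cone and the dyadic annuli $\{(y,t):2^{k-1}t\le|x-y|<2^kt\}$ for $k\in\nn$. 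On the $k$-th annulus the factor $(t/(t+|x-y|))^{\lz n}$ is of size $2^{-k\lz n}$, and one rewrites the corresponding piece of $[g_{\lz,\az,s}^\ast(f)]^2$ as an average, over $2^kt$-dilated cones, of $[A_{\az,s}(f)(y,t)]^2$. Using the elementary covering estimate $\int_{\{|y-x|<2^kt\}}(\cdots)\,dy \lesssim 2^{kn}\cM\big([S_{\az,s}(f)]^\theta\big)(x)^{2/\theta}$ for a suitable $\theta\in(0,1)$ — in fact any $\theta<p_-$ works — one obtains the pointwise inequality
\begin{equation*}
g_{\lz,\az,s}^\ast(f)(x)\le C\lf\{\sum_{k=0}^{\fz}2^{-k(\lz-2)n}
2^{kn}\lf[\cM\lf(\lf[S_{\az,s}(f)\r]^\theta\r)(x)\r]^{2/\theta}\r\}^{1/2}
= C\lf[\cM\lf(\lf[S_{\az,s}(f)\r]^\theta\r)(x)\r]^{1/\theta},
\end{equation*}
where the geometric series converges precisely because $\lz>3$ gives $(\lz-3)n>0$; the sharper threshold $\lz>3+2(\az+s)/n$ appears when one tracks the extra decay $2^{-2k(\az+s)}$ coming from the vanishing moments of $A_{\az,s}$ against the growth caused by enlarging the cone, rather than using the crude $2^{kn}$ bound. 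Choosing $\theta\in(0,p_-)$ with $\theta<2$ so that $2/\theta>1$, one then invokes the boundedness of the Hardy--Littlewood maximal operator on $L^{p(\cdot)/\theta}(\rn)$ (valid since $p(\cdot)$ satisfies \eqref{ve1} and \eqref{ve2}, so $p(\cdot)/\theta$ has lower bound greater than $1$), together with the identity $\|g\|_{\vlp}=\|\,|g|^\theta\,\|_{L^{p(\cdot)/\theta}(\rn)}^{1/\theta}$, to conclude $\|g_{\lz,\az,s}^\ast(f)\|_{\vlp}\le C\|S_{\az,s}(f)\|_{\vlp}$.

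Finally, the statement about replacing $g^\ast_{\lz,\az,s}(f)$ by $\wz g^\ast_{\lz,(\az,\ez),s}(f)$ with $\ez\in(\az+s,\fz)$ follows from the pointwise comparability of $\wz S_{(\az,\ez),s}(f)$ and $\wz g^\ast_{\lz,(\az,\ez),s}(f)$ in one direction (as in the cone-restriction argument above) and, for the other direction, from the same dyadic-annulus decomposition applied to the kernels with unbounded support; here one must check that the decay $(1+|x|)^{-n-\ez}$ built into $\ccc_{(\az,\ez),s}(\rn)$ is strong enough — this is exactly where $\ez>\az+s$ is used — after which one reduces to $\wz S_{(\az,\ez),s}(f)$ and applies the last assertion of Corollary \ref{c-intrinsic}. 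The main obstacle is the second paragraph: getting the geometric series to converge with the optimal exponent $\lz>3+2(\az+s)/n$ requires carefully exploiting the cancellation of the intrinsic kernels on each annulus (a Taylor-expansion / moment estimate for $\phi\in\ccc_{\az,s}(\rn)$), not merely the trivial volume bound; everything else is a routine transplantation of the classical $g_\lz^\ast$ argument into the variable-exponent framework, relying only on the maximal-function boundedness on $L^{p(\cdot)/\theta}(\rn)$ and the norm identity for powers.
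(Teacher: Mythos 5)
Your sufficiency argument (restricting the integral defining $g^\ast_{\lz,\az,s}(f)$ to the cone to get $S_{\az,s}(f)\ls g^\ast_{\lz,\az,s}(f)$ pointwise, then invoking Corollary \ref{c-intrinsic}) is fine and matches the paper in spirit. The gap is in the reverse inequality, which is the heart of the theorem. First, your ``elementary covering estimate'' $\int_{\{|y-x|<2^kt\}}(\cdots)\,dy\ls 2^{kn}[\cm([S_{\az,s}(f)]^\theta)(x)]^{2/\theta}$ with $\theta<p_-\le1$ is neither elementary nor justified: since $[\cm(g^\theta)]^{1/\theta}$ decreases as $\theta$ decreases, your claim with $\theta<1$ is strictly stronger than the already delicate $\theta=2$ version, and even that version fails as a pointwise bound because the averaging ball $B(x,(2^k+1)t)$ depends on $t$, so the $t$-integration cannot be pulled outside the spatial average (this is precisely why the classical change-of-aperture argument for $p\le2$ goes through norm or good-$\lambda$ inequalities rather than a pointwise maximal bound). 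Second, and more seriously, your accounting of the threshold is backwards: your crude bound already makes the series converge for $\lz>3$, and you claim the vanishing moments supply ``extra decay $2^{-2k(\az+s)}$'' producing the threshold $3+2(\az+s)/n$ --- but extra decay could only lower the threshold below $3$, whereas $3+2(\az+s)/n>3$. If your scheme were correct it would prove the theorem for all $\lz>3$, contradicting the paper's explicit remark that the range $\lz\in(2+2(\az+s)/n,\,3+2(\az+s)/n]$ remains open.

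The idea you are missing is the dilation property of the intrinsic kernel classes (Lemma \ref{l-claim}(ii)): if $\psi\in\ccc_{(\az,\ez),s}(y,t_1)$ and $t_1<t_2$, then $(t_1/t_2)^{n+s+\az}\psi\in\ccc_{(\az,\ez),s}(y,t_2)$, whence $\wz A_{(\az,\ez),s}(f)(y,2^{-k}t)\le 2^{k(n+s+\az)}\wz A_{(\az,\ez),s}(f)(y,t)$. Decomposing $[\wz g^\ast_{\lz,(\az,\ez),s}(f)(x)]^2$ over the annuli $2^{k-1}t\le|y-x|<2^kt$, substituting $t\mapsto 2^{k}t$ (which costs $2^{kn}$), and applying this rescaling inequality (which costs $2^{2k(n+s+\az)}$) bounds the $k$-th piece by $2^{-k[\lz n-3n-2(s+\az)]}[\wz S_{(\az,\ez),s}(f)(x)]^2$; the series converges exactly when $\lz>3+2(\az+s)/n$, yielding the purely pointwise inequality $\wz g^\ast_{\lz,(\az,\ez),s}(f)\ls\wz S_{(\az,\ez),s}(f)$ with no maximal function at all. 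The exponent $2(\az+s)/n$ is thus a \emph{loss} incurred when rescaling intrinsic kernels from scale $2^{-k}t$ to scale $t$, not a gain from cancellation. With this pointwise comparison in hand, Corollary \ref{c-intrinsic} finishes the necessity, and the trivial bound $g^\ast_{\lz,\az,s}(f)\ls\wz g^\ast_{\lz,(\az,\ez),s}(f)$ takes care of the compactly supported version.
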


\begin{remark}
\begin{itemize}
\item[(i)] We point out that there exists a positive constant $C$ such that,
for all $\phi\in\ccc_{\az,s}(\rn)$, $C\phi\in\ccc_{(\az,\ez),s}(\rn)$
and hence $\phi\in\cl_{1,p(\cdot),s}(\rn)$; see Lemma \ref{l-sembed} below.
Thus, the intrinsic square functions are well defined for functionals in
$(\cl_{1,p(\cdot),s}(\rn))^\ast$. Observe that,
if $\phi\in\cs(\rn)$, then $\phi\in\cl_{1,p(\cdot),s}(\rn)$;
see also Lemma \ref{l-sembed} below. Therefore,
if $f\in(\cl_{1,p(\cdot),s}(\rn))^\ast$, then
$f\in\cs'(\rn)$ and $f$ vanishing weakly at infinity makes sense.

\item[(ii)] Recall that Liang and Yang \cite{lyts13} characterized
the Musielak-Orlicz Hardy space $H^\vz(\rn)$ in terms of the intrinsic square
functions original introduced by Wilson \cite{wilson07}.
Moreover, Liang and Yang \cite{lyts13} established the intrinsic
$g_\lz^\ast$-functions
$g_{\lz,\az,s}^\ast$ and $\wz g_{\lz,(\az,\ez),s}^\ast$ with the best known range
$\lz\in(2+2(\az+s)/n,\fz)$ via some argument similar to that used in the
proof of \cite[Theorem 4.8]{lhy}. However,
it is still unclear whether the intrinsic $g_\lz^\ast$-functions
$g_{\lz,\az,s}^\ast$ and $\wz g_{\lz,(\az,\ez),s}^\ast$,
when $\lz\in(2+2(\az+s)/n,3+2(\az+s)/n]$, can characterize
$H^{p(\cdot)}(\rn)$ or not.
Based on the same reason as in Remark \ref{r-equivalen}(ii),
we see that the method used in the proof of \cite[Theorem 1.8]{lyts13}
is not available for the present setting.

\item[(iii)] Let $p\in(0,1]$. When
\begin{equation}\label{1.x1}
p(x):=p\quad{\rm for\ all}\quad x\in\rn,
\end{equation}
then
$H^{p(\cdot)}(\rn)=H^p(\rn)$. In this case,
Theorem \ref{t-intrinsic} and Corollary \ref{c-intrinsic} coincide with
the corresponding results of the classical Hardy space $H^p(\rn)$;
see \cite[Theorem 1.6]{lyts13} and \cite[Corollary 1.7]{lyts13}.

\item[(iv)] We also point out that the method used in this article
does not work for the variable exponent Hardy space
investigated by Cruz-Uribe and Wang in
\cite{cw13}, since it strongly depends on
the locally log-H\"older continuity condition \eqref{ve1} and the
decay condition \eqref{ve2} of $p(\cdot)$.
Thus, it is still unknown whether the variable exponent Hardy space
in \cite{cw13} has any intrinsic square
function characterizations or not.
\end{itemize}
\end{remark}

\begin{definition}
Let $p(\cdot)\in\cp(\rn)$. A measure $d\mu$ on $\rr_+^{n+1}$ is called
a $p(\cdot)$-\emph{Carleson measure} if
$$\|d\mu\|_{p(\cdot)}:=\sup_{Q\subset\rn}\frac{|Q|^{1/2}}{\|\chi_Q\|_{\vlp}}
\lf\{\int_{\widehat{Q}}|d\mu(x,t)|\r\}^{1/2}<\fz,$$
where the supremum is taken over all cubes $Q\subset \rn$ and
 $\wh Q$ denotes the
\emph{tent} over $Q$, namely, $\wh Q:=\{(y,t)\in\rr_+^{n+1}:\ B(x,t)\subset Q\}$.
\end{definition}

\begin{theorem}\label{t-cm1}
Let $p(\cdot)\in\cp(\rn)$ satisfy \eqref{ve1} and \eqref{ve2}.
Assume that $p_+\in(0,1]$, $s\in\zz_+$,
$p_-\in( n/{(n+s+1)},1]$ and $\phi\in\cs(\rn)$ is a
radial function satisfying
\eqref{function1} and \eqref{function2}.
\begin{itemize}
\item[{\rm(i)}] If $b\in\cl_{1,p(\cdot),s}(\rn)$,
then $d\mu(x,t):=|\phi_t\ast b(x)|^2\,\frac{dxdt}t$ for all $(x,t)\in\rr_+^{n+1}$
is a $p(\cdot)$-Carleson measure
on $\rr_+^{n+1}$; moreover, there exists a positive constant $C$,
independent of $b$, such that
$\|d\mu\|_{p(\cdot)}\le C\|b\|_{\cl_{1,p(\cdot),s}(\rn)}$.

\item[\rm(ii)] If $b\in L_{\rm loc}^2(\rn)$ and
$d\mu(x,t):=|\phi_t\ast b(x)|^2\,\frac{dxdt}t$ for all $(x,t)\in\rr_+^{n+1}$
is a $p(\cdot)$-Carleson measure on $\rr_+^{n+1}$,
then $b\in\cl_{1,p(\cdot),s}(\rn)$
and, moreover,
there exists a positive constant $C$, independent of $b$, such that
$\|b\|_{\cl_{1,p(\cdot),s}(\rn)}\le C\|d\mu\|_{p(\cdot)}$.
\end{itemize}
\end{theorem}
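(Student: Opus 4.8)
The plan is to prove the two directions of Theorem~\ref{t-cm1} separately, following the classical Carleson measure argument adapted to the variable exponent setting, with the technical heart being the replacement of weighted-measure estimates by the quasi-norm $\|\chi_Q\|_{\vlp}$.

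\textbf{Proof of (i).} Fix a cube $Q\subset\rn$ with centre $x_Q$ and side length $\ell(Q)$. The goal is to estimate $\int_{\wh Q}|\phi_t\ast b(x)|^2\,\frac{dx\,dt}{t}$ and show it is bounded by $C\|b\|_{\cps}^2\,\|\chi_Q\|_{\vlp}^2/|Q|$. First I would decompose $b$ relative to a fixed dilate of $Q$, say $\wz Q:=8\sqrt n\,Q$, writing $b=(b-P^s_{\wz Q}b)\chi_{\wz Q}+(b-P^s_{\wz Q}b)\chi_{\rn\setminus\wz Q}+P^s_{\wz Q}b=:b_1+b_2+b_3$. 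Since $\phi\in\cs_\fz(\rn)$ by \eqref{function1}, $\phi_t\ast b_3=0$ because $b_3$ is a polynomial of order at most $s$ (here one uses that $\phi$ has all moments vanishing, so in particular those up to order $s$; note $p_-\in(n/(n+s+1),1]$ forces $s\ge0$ and the moment condition is exactly what is needed). For the term $b_1$, I would use that the square function $g(\cdot)$ associated with $\phi$ is bounded on $L^2(\rn)$ (a standard Littlewood--Paley estimate via Plancherel and \eqref{function1}), together with the $\cps$ estimate for $q=2$; here I need the equivalence of the Campanato norms $\cl_{q,p(\cdot),s}$ for different $q\in[1,\fz)$, or at least the John--Nirenberg-type bound controlling the $L^2$ mean oscillation by $\|b\|_{\cps}\|\chi_Q\|_{\vlp}/|Q|$, which is available from \cite{ns12}. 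This yields
\[
\int_{\rn}|\phi_t\ast b_1(x)|^2\,\frac{dx\,dt}{t}\ls\|b_1\|_{L^2(\rn)}^2\ls\|b\|_{\cps}^2\,\frac{\|\chi_{\wz Q}\|_{\vlp}^2}{|\wz Q|}.
\]
For $b_2$, since the integration in $(x,t)$ is over $\wh Q$ we have $t\le\ell(Q)$ and $x\in Q$, so for $y\in\rn\setminus\wz Q$ we have $|x-y|\gtrsim\ell(Q)+|x_Q-y|\gtrsim t+|x_Q-y|$; using the size estimate $|\phi_t(x-y)|\ls t^{-n}(1+|x-y|/t)^{-n-s-1}$ and the moment cancellation of $\phi$ to gain an extra factor, a standard annular decomposition $\rn\setminus\wz Q=\bigcup_{k\ge1}(2^{k+1}\wz Q\setminus 2^k\wz Q)$ together with the $\cps$ estimate on each $2^k\wz Q$ (and the doubling-type comparison $\|\chi_{2^kQ}\|_{\vlp}\ls 2^{kn/p_-}\|\chi_Q\|_{\vlp}$, valid because $p(\cdot)\in\cp(\rn)$) gives a convergent sum, again bounded by the desired quantity. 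Finally I would compare $\|\chi_{\wz Q}\|_{\vlp}/|\wz Q|^{1/2}$ with $\|\chi_Q\|_{\vlp}/|Q|^{1/2}$ up to a constant, again via the doubling property, completing direction (i).

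\textbf{Proof of (ii).} Here I am given that $d\mu=|\phi_t\ast b|^2\,\frac{dx\,dt}{t}$ is a $p(\cdot)$-Carleson measure and $b\in L^2_{\rm loc}(\rn)$, and I must bound $\|b\|_{\cps}$. Fix a cube $Q$. Choose an auxiliary radial $\wz\phi\in\cs(\rn)$ supported in the annulus as in \eqref{function1} such that the Calder\'on reproducing formula $b=\int_0^\fz\wz\phi_t\ast\phi_t\ast b\,\frac{dt}{t}$ holds in an appropriate sense modulo polynomials (this uses $\wh{\wz\phi}\,\wh\phi$ summing to $1$ suitably, guaranteed by \eqref{function2}). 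Split the $t$-integral at $\ell(Q)$: the part $\int_0^{\ell(Q)}$ I estimate by testing against a suitable function and using duality $(L^2)^*=L^2$ on $Q$, exploiting that for $(y,t)$ with $t<\ell(Q)$ and $y$ near $Q$ the point $(y,t)$ lies in the tent over a bounded dilate of $Q$, so $\int\int|\phi_t\ast b(y)|^2\,\frac{dy\,dt}{t}\ls\|d\mu\|_{p(\cdot)}^2\,\|\chi_{\wz Q}\|_{\vlp}^2/|\wz Q|$; combined with the $L^2$-boundedness of the relevant square/maximal operators this controls $\frac1{|Q|}\int_Q|b(x)-P^s_Qb(x)|^2\,dx$ by $\|d\mu\|_{p(\cdot)}^2\|\chi_Q\|_{\vlp}^2/|Q|^2$. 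For the part $\int_{\ell(Q)}^\fz$, the cancellation $\wz\phi\in\cs_\fz(\rn)$ of order $\ge s$ makes this contribution, after subtracting its order-$s$ Taylor polynomial at $x_Q$, small on $Q$: one estimates $|\wz\phi_t\ast g(x)-(\text{Taylor poly})|\ls (\ell(Q)/t)^{s+1}\,t^{-n/2}\|g(\cdot,t)\|_{L^2}$-type bounds and integrates, using again the Carleson bound over dyadic dilates $2^kQ$ and the doubling estimate for $\|\chi_{2^kQ}\|_{\vlp}$ to sum over $k$ (convergence needs $p_-(n+s+1)>n$, exactly the hypothesis). Taking square roots and the supremum over $Q$ gives $\|b\|_{\cl_{2,p(\cdot),s}(\rn)}\ls\|d\mu\|_{p(\cdot)}$, and then $\|b\|_{\cps}\ls\|b\|_{\cl_{2,p(\cdot),s}(\rn)}$ by the norm equivalence.

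\textbf{Main obstacle.} The routine parts are the Littlewood--Paley $L^2$ estimates and the Taylor-remainder bounds; the genuine difficulty is bookkeeping the variable exponent quantities $\|\chi_Q\|_{\vlp}$ under dilation and under passage between the $L^1$ and $L^2$ Campanato norms. Concretely, I expect the crux to be establishing and correctly invoking the comparison $\|\chi_{2^kQ}\|_{\vlp}\ls 2^{kn/p_-}\|\chi_Q\|_{\vlp}$ (and the reverse-type lower bound $\|\chi_{2^kQ}\|_{\vlp}\gs 2^{kn/p_+}\|\chi_Q\|_{\vlp}$ where needed), verifying that the resulting geometric series in the annular decompositions converge under the stated constraint $p_-\in(n/(n+s+1),1]$, and checking that the equivalence $\cl_{1,p(\cdot),s}(\rn)=\cl_{2,p(\cdot),s}(\rn)$ with comparable norms—which is the variable-exponent analogue of the John--Nirenberg inequality for this Campanato scale—is legitimately available from \cite{ns12} in the form required here. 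Once these structural facts are in hand, both directions follow the template above.
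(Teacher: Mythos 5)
Your part (i) is essentially the paper's proof: the same three-term splitting $b=P^s_{\wz Q}b+(b-P^s_{\wz Q}b)\chi_{\wz Q}+(b-P^s_{\wz Q}b)\chi_{\rn\setminus\wz Q}$, annihilation of the polynomial piece by the vanishing moments of $\phi$, the $L^2$ bound for the $g$-function on the local piece combined with the equivalence of the $q=1$ and $q=2$ Campanato norms, and a decay estimate for the far piece. (Two small remarks: the paper obtains the far-piece bound through its Lemma \ref{l-cm3}, whose proof is exactly your annular decomposition; and the norm equivalence $\cl_{1,p(\cdot),s}(\rn)=\cl_{2,p(\cdot),s}(\rn)$ is not simply quoted from \cite{ns12} but is proved in this paper via a John--Nirenberg inequality, Lemma \ref{l-jnineq} and Corollary \ref{c-jnineq}.)

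For part (ii) you take a genuinely different route. The paper argues by duality: for $f\in L^{\fz,s}_{\rm comp}(\rn)$ it writes $|\int f\overline b|\sim|\int_{\urn}\phi_t\ast f\,\overline{\phi_t\ast b}\,\frac{dx\,dt}{t}|$, decomposes $\phi_t\ast f$ into $(p(\cdot),\fz)$-atoms of the variable tent space $T^{p(\cdot)}_2(\urn)$ (Theorem \ref{t-tentad}, a main new tool of the paper), uses the Carleson bound on each tent $\wh{Q_j}$ together with $\sum_j\lz_j\le\ca^\ast(\{\lz_j\},\{Q_j\})$ (Remark \ref{r-adp-1}, where $p_+\le1$ enters), and concludes $\|b\|_{\cpss}\ls\|d\mu\|_{p(\cdot)}$ from the duality $(H^{p(\cdot)}(\rn))^\ast=\cl_{1,p(\cdot),s}(\rn)$ of \cite{ns12}. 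Your direct reproducing-formula estimate of the $L^2$ mean oscillation on each cube avoids the tent-space atomic decomposition and the duality theorem altogether, and it makes the role of $p_->n/(n+s+1)$ completely transparent (it is exactly the convergence condition for the dyadic sum, as you note); the price is a set of local estimates the paper never has to face. Three points in your sketch need repair before it is a proof. First, in the range $t\le\ell(Q)$ the function $\wz\phi_t\ast g$ (with $g\in L^2(Q)$ the duality test function) is not supported near $Q$, so the $(y,t)$-integration is not confined to the tent over a bounded dilate of $Q$; you need the same annular decomposition there, paying Schwartz decay of $\wz\phi$ against the growth $\|\chi_{2^kQ}\|_{\vlp}\ls2^{kn/p_-}\|\chi_Q\|_{\vlp}$ from Lemma \ref{l-bigsball}. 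Second, the far-part bound you quote involves the global norm $\|\phi_t\ast b\|_{L^2(\rn)}$, which may be infinite; it must be replaced by Cauchy--Schwarz over dyadic regions $\{t\sim2^m\ell(Q),\,|y-x_Q|\ls2^{\max(k,m)}\ell(Q)\}$, each controlled by the Carleson bound over the tent of the corresponding dilated cube, which then yields the geometric factor $2^{m(n/p_- -n-s-1)}$ and the convergence you anticipate. Third, the Calder\'on reproducing formula modulo polynomials for a function that is merely in $L^2_{\rm loc}(\rn)$ and satisfies the Carleson condition requires a justification of the sense of convergence (for large $t$ it converges only after subtracting the Taylor polynomials, and for small $t$ only in a local $L^2$ sense); this is precisely the kind of issue the paper's duality argument sidesteps by pairing $b$ against compactly supported functions with vanishing moments and using the density of $L^{\fz,s}_{\rm comp}(\rn)$ in $H^{p(\cdot)}(\rn)$. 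With these repairs your route does work, but as written these are genuine gaps rather than routine omissions.
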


In what follows, for $\az\in(0,1]$, $s\in\zz_+$, $\ez\in(0,\fz)$
and $b\in\cpss$,
the \emph{measure $\mu_b$ on}
$\rr_+^{n+1}$ is defined by setting, for all $(x,t)\in\rr_+^{n+1}$,
\begin{equation}\label{cmin}
d\mu_b(x,t):=[{\wz A_{(\az,\ez),s}(b)(x,t)}]^2\,\frac{dxdt}{t},
\end{equation}
where { $\wz A_{(\az,\ez),s}(b)$} is as in
{\eqref{intrinsic-3}} with $f$ replaced by $b$.
\begin{theorem}\label{t-cm2}
Let $\az\in(0,1]$, $s\in\zz_+$, $\ez\in(\az+s,\fz)$,
$p(\cdot)\in\cp(\rn)$ satisfy \eqref{ve1}, \eqref{ve2}, $p_+\in(0,1]$ and
$p_-\in( n/{(n+\az+s)},1]$.

\begin{itemize}
\item[\rm(i)] If $b\in\cl_{1,p(\cdot),s}(\rn)$, then
$d\mu_b$ as in \eqref{cmin} is a $p(\cdot)$-Carleson
measure on $\rr_+^{n+1}$; moreover, there exists a positive
constant $C$, independent of $b$, such that
$\|d\mu_b\|_{p(\cdot)}\le C\|b\|_{\cl_{1,p(\cdot),s}(\rn)}$.

\item[\rm(ii)] If $b\in L_{\rm loc}^2(\rn)$ and $d\mu_b$ as in \eqref{cmin} is a $p(\cdot)$-Carleson measure on $\rr_+^{n+1}$, then it follows that $b\in\cl_{1,p(\cdot),s}(\rn)$;
moreover, there exists a positive constant $C$, independent of $b$, such that
$$\|b\|_{\cl_{1,p(\cdot),s}(\rn)}\le C\|d\mu_b\|_{p(\cdot)}.$$
\end{itemize}
\end{theorem}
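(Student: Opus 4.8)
I would prove the two parts of Theorem~\ref{t-cm2} separately. For part (ii), observe first that the fixed radial $\phi$ satisfying \eqref{function1} and \eqref{function2} lies in $\cs_\fz(\rn)$ and has rapidly decaying derivatives, so by Lemma~\ref{l-sembed} (or a direct verification) there exists a positive constant $C_0$ with $C_0\phi\in\ccc_{(\az,\ez),s}(\rn)$. Hence $\wz A_{(\az,\ez),s}(b)(x,t)\ge C_0|\phi_t\ast b(x)|$ for all $(x,t)\in\urn$, so that, as measures on $\urn$, $d\mu_b\ge C_0^2|\phi_t\ast b(x)|^2\,\frac{dx\,dt}t=:C_0^2\,d\nu$; consequently $d\nu$ is a $p(\cdot)$-Carleson measure with $\|d\nu\|_{p(\cdot)}\le C_0^{-1}\|d\mu_b\|_{p(\cdot)}$. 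Since $\az\in(0,1]$ forces $n/(n+\az+s)\ge n/(n+s+1)$, the hypothesis $p_-\in(n/(n+\az+s),1]$ yields $p_-\in(n/(n+s+1),1]$, so Theorem~\ref{t-cm1}(ii) applies to $b$ and $d\nu$ and gives $b\in\cpss$ with $\|b\|_{\cpss}\ls\|d\nu\|_{p(\cdot)}\ls\|d\mu_b\|_{p(\cdot)}$, as desired.

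For part (i), fix a cube $Q\subset\rn$, put $\wz Q:=2\sqrt n\,Q$, and split $b=b_1+b_2+P_{\wz Q}^sb$, where $b_1:=(b-P_{\wz Q}^sb)\chi_{\wz Q}$ and $b_2:=(b-P_{\wz Q}^sb)\chi_{\rn\setminus\wz Q}$. Since every $\phi\in\ccc_{(\az,\ez),s}(\rn)$ has vanishing moments up to order $s$, one has $\phi_t\ast(P_{\wz Q}^sb)\equiv0$, hence $\wz A_{(\az,\ez),s}(P_{\wz Q}^sb)\equiv0$, so by the sublinearity of $\wz A_{(\az,\ez),s}$ it suffices to bound $\int_{\wh Q}[\wz A_{(\az,\ez),s}(b_i)(x,t)]^2\,\frac{dx\,dt}t$ for $i\in\{1,2\}$. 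For $i=1$, Fubini's theorem bounds this by a constant times $\|\wz S_{(\az,\ez),s}(b_1)\|_{L^2(\rn)}^2$; the $L^2(\rn)$-boundedness of the intrinsic Lusin area function (Wilson~\cite{wilson07} for $s=0$, Liang--Yang~\cite{lyts13} for general $s$) bounds this by a constant times $\|b_1\|_{L^2(\rn)}^2=\int_{\wz Q}|b-P_{\wz Q}^sb|^2\,dx$; and the norm equivalence $\cpss=\cl_{2,p(\cdot),s}(\rn)$ (Nakai--Sawano~\cite{ns12}), together with $|\wz Q|\approx|Q|$ and $\|\chi_{\wz Q}\|_{\vlp}\ls\|\chi_Q\|_{\vlp}$, bounds the latter by a constant times $|Q|\,(\|\chi_Q\|_{\vlp}/|Q|)^2\|b\|_{\cpss}^2$, which is exactly the bound needed for the $b_1$-term.

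The core estimate is for $b_2$. For $(x,t)\in\wh Q$ one has $x\in Q$, $0<t<\ell(Q)$, and $|x-y|\gs\ell(Q)>t$ whenever $y\notin\wz Q$, so $|\phi_t(x-y)|\ls t^\ez|x-y|^{-n-\ez}$ for every $\phi\in\ccc_{(\az,\ez),s}(\rn)$. Decomposing $\rn\setminus\wz Q$ into the dyadic annuli $2^{k+1}\wz Q\setminus2^k\wz Q$, $k\ge1$, writing $b-P_{\wz Q}^sb=(b-P_{2^{k+1}\wz Q}^sb)+(P_{2^{k+1}\wz Q}^sb-P_{\wz Q}^sb)$ on the $k$-th annulus, and using the Campanato average bound on the first summand together with the standard telescoping estimate $\|P_{2^{k+1}\wz Q}^sb-P_{\wz Q}^sb\|_{L^\fz(2^{k+1}\wz Q)}\ls\sum_{j=1}^{k+1}2^{(k+1-j)s}\frac{\|\chi_{2^j\wz Q}\|_{\vlp}}{|2^j\wz Q|}\|b\|_{\cpss}$ on the second, and invoking $\|\chi_{2^j\wz Q}\|_{\vlp}\ls2^{jn/p_-}\|\chi_{\wz Q}\|_{\vlp}$, I would sum the resulting geometric series — convergent precisely because $p_-\in(n/(n+\az+s),1]$ and $\ez>\az+s$ give $n/p_--n<\az+s<\ez$ — to obtain
\[
\wz A_{(\az,\ez),s}(b_2)(x,t)\ls t^\ez\,\ell(Q)^{-n-\ez}\,\|\chi_{\wz Q}\|_{\vlp}\,\|b\|_{\cpss}\qquad\text{for all }(x,t)\in\wh Q.
\]
Since $\wh Q\subset Q\times(0,\ell(Q))$, this yields
\[
\int_{\wh Q}[\wz A_{(\az,\ez),s}(b_2)]^2\,\frac{dx\,dt}t\ls\ell(Q)^{-2n-2\ez}\|\chi_{\wz Q}\|_{\vlp}^2\|b\|_{\cpss}^2\int_Q\int_0^{\ell(Q)}t^{2\ez-1}\,dt\,dx\ls|Q|^{-1}\|\chi_{\wz Q}\|_{\vlp}^2\|b\|_{\cpss}^2,
\]
which is the bound needed for the $b_2$-term. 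Combining the $b_1$- and $b_2$-estimates and taking the supremum over all cubes $Q$ completes the proof of part (i).

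The hard part will be the $b_2$-estimate: one must simultaneously extract the genuine decay $t^\ez$ as $t\to0$ — which is indispensable, since $\int_{\wh Q}\frac{dx\,dt}t$ diverges without it — and control the polynomial corrections $P_{2^{k+1}\wz Q}^sb-P_{\wz Q}^sb$ across the dyadic annuli, verifying that the resulting geometric series in $k$ (and in the auxiliary index $j$) converge; this is precisely where the hypotheses $\ez>\az+s$ and $p_-\in(n/(n+\az+s),1]$ enter. The remaining ingredients — the $L^2(\rn)$-boundedness of $\wz S_{(\az,\ez),s}$ and the self-improvement $\cpss=\cl_{2,p(\cdot),s}(\rn)$ — are quoted from the literature, and part (ii) is essentially immediate once Theorem~\ref{t-cm1} is available.
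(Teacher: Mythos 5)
Your proposal is correct and follows essentially the same route as the paper: part (ii) by comparing $|\phi_t\ast b|$ with $\wz A_{(\az,\ez),s}(b)$ and invoking Theorem \ref{t-cm1}(ii), and part (i) by the three-fold splitting $b=P^s_{\wz Q}b+(b-P^s_{\wz Q}b)\chi_{\wz Q}+(b-P^s_{\wz Q}b)\chi_{\rn\setminus\wz Q}$, with vanishing moments killing the polynomial, the $L^2$-boundedness of the intrinsic square function plus the John--Nirenberg self-improvement $\cpss=\cl_{2,p(\cdot),s}(\rn)$ handling the local piece, and the kernel decay handling the tail. The only cosmetic difference is that you carry out the dyadic-annuli/telescoping estimate for the tail by hand, whereas the paper packages exactly that computation into Lemma \ref{l-cm3} (via Lemma \ref{l-cm2}); the convergence conditions you identify, $\ez>\az+s>n/p_--n$, are the same ones the paper uses.
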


\begin{remark}
\begin{itemize}
\item[(i)] Fefferman and Stein \cite{fs72} shed some light on the tight connection
between BMO-functions and Carleson measures, which is the case of Theorem
\ref{t-cm1} when $s=0$ and $p(x):=1$ for all $x\in\rn$.

\item[(ii)] When $p(\cdot)$ is as in \eqref{1.x1} with $p\in(0,1]$,
Theorem \ref{t-cm1} is already known (see \cite[Theorem 4.2]{lyjmaa}).

\item[(iii)] When $p(\cdot)$ is as in \eqref{1.x1} with $p\in(0,1]$,
Theorem \ref{t-cm2} was obtained in \cite[Theorem 1.11]{lyts13}
with $p\in( n/{(n+\az+s)},1]$. Thus, the range of
$p_-$ in Theorem \ref{t-cm2} is reasonable and the best known
possible, even in the case that $p(\cdot)$ being as
in \eqref{1.x1} with $p\in(0,1]$.
\end{itemize}
\end{remark}
This article is organized as follows.

Section \ref{s2} is devoted to the proofs of Theorems
\ref{t-equivalen}, \ref{t-intrinsic}, \ref{t-intrinsic-1},
\ref{t-cm1} and \ref{t-cm2}.
To prove Theorem \ref{t-equivalen}, we establish an equivalent characterization
of $\vhs$ via the discrete Littlewood-Paley $g$-function
(see Proposition \ref{p-lpd} below)
by using the nontangential maximal function characterization of $\vhs$ obtained
in this article and the Littlewood-Paley
decomposition of $\vhs$ which was proved in \cite{ns12}. In the proof of
Theorem \ref{t-equivalen}, we also borrow some ideas
from the proofs of \cite[Theorem 2.8]{ut11} (see also \cite[Theorem 3.2]{lsuyy}).

The key tools used to prove Theorem \ref{t-intrinsic} are
the Littlewood-Paley $g$-function characterization of $\vhs$
in Theorem \ref{t-equivalen},
the atomic decomposition of $\vhs$ established in \cite{ns12}
(see also Lemma \ref{l-equi} below),
the dual space of $\vhs$, $\cl_{1,p(\cdot),s}(\rn)$, given in
\cite{ns12} and the fact that the intrinsic square functions are
pointwise comparable proved in \cite{lyts13}.
As an application of Theorems \ref{t-equivalen} and \ref{t-intrinsic},
we give the proof of Theorem \ref{t-intrinsic-1} via showing that,
for all $x\in\rn$,
the intrinsic square functions $\wz S_{(\az,\ez),s}(f)(x)$ and
$\wz g_{\lz,(\az,\ez),s}^\ast(f)(x)$ are
pointwise comparable under the assumption $\lz\in(3+2(\az+s)/n,\fz)$.

The proof of Theorem \ref{t-cm1} is similar to that of
\cite[Theorem 4.2]{lyjmaa}, which depends on atomic decomposition of the
tent space with variable exponent, the fact that the dual space of $\vhs$
is $\cpss$ (see \cite[Theorem 7.5]{ns12})
and some properties of $\cpss$. To complete the proof of Theorem \ref{t-cm1},
we first introduce the tent space with
variable exponent and obtain its atomic decomposition
in Theorem \ref{t-tentad} below. Then we give an equivalent norm of $\cpss$
via establishing a John-Nirenberg inequality for functions in $\cpss$.
At the end of Section \ref{s2}, we give the proof of Theorem \ref{t-cm2} by using
Theorem \ref{t-cm1} and some ideas from the proof of
\cite[Theorem 1.11]{lyts13}.

Finally, we make some conventions on notation.
Throughout the paper, we denote by $C$ a \emph{positive constant}
which is independent of the main parameters, but it may vary from line to line.
The \emph{symbol} $A\ls B$
means $A\le CB$. If $A\ls B$ and $B\ls A$, then we write $A\sim B$.
If $E$ is a subset of $\rn$, we denote by $\chi_E$ its
\emph{characteristic function}.
For any $x\in\rn$ and $r\in(0,\fz)$, let $B(x,r):=\{y\in\rn:\ |x-y|<r\}$
be the ball. For $\beta:=(\beta_1,\dots,\beta_n)\in\zz_+^n$,
let $\beta !:=\beta_1!\cdots\beta_n !$.
For $\alpha\in\rr$, we use $\lfloor \alpha\rfloor$
to denote the maximal integer not more than $\alpha$.
For a measurable function $f$, we use $\overline{f}$ to denote
its \emph{conjugate function}.

\section{Proofs of main results\label{s2}}

In what follows, for all $f\in\cs'(\rn)$ and $N\in\nn$,
the \emph{nontangential maximal function} $f_N^\ast$
of $f$ is defined by setting, for all $x\in\rn$,
\begin{equation}\label{d-nmf}
f_{N}^\ast(x):=\sup_{\psi\in\cf_N(\rn)}
\sup_{\gfz{t\in(0,\fz)}{|y-x|<t}}|f\ast\psi_t(y)|,
\end{equation}
where $\cf_N(\rn)$ is as in \eqref{fnd}.

The following proposition is an equivalent characterization of $\vhs$.
\begin{proposition}\label{p-vhs-p}
Let $p(\cdot)\in\cp(\rn)$ satisfy \eqref{ve1} and \eqref{ve2}, and $N$ be as in
\eqref{1nx}. Then $f\in \vhs$ if and only if $f\in\cs'(\rn)$
and $f^\ast_N\in\vlp$;
moreover, there exists a positive
constant $C$ such that, for all $f\in \vhs$,
$$C^{-1}\|f\|_{\vhs}\le\|f^\ast_N\|_{\vlp}\le C\|f\|_{\vhs}.$$
\end{proposition}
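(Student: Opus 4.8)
The plan is to compare the nontangential maximal function $f_N^\ast$ with the grand maximal function $f_{N,+}^\ast$ used in Definition \ref{d-vhs}, and then invoke the fact that the space $\vlp$ is a quasi-Banach lattice together with the boundedness of the Hardy--Littlewood maximal operator $\cm$ on an appropriate power of $\vlp$. One direction is essentially trivial: since
$f_{N,+}^\ast(x)=\sup\{|f\ast\psi_t(x)|:\ t\in(0,\fz),\ \psi\in\cf_N(\rn)\}$
only takes the supremum over $y=x$ while $f_N^\ast(x)$ in \eqref{d-nmf} enlarges this to all $y$ with $|y-x|<t$, we trivially have $f_{N,+}^\ast(x)\le f_N^\ast(x)$ pointwise. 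Hence $\|f\|_{\vhs}=\|f_{N,+}^\ast\|_{\vlp}\le\|f_N^\ast\|_{\vlp}$ by the lattice property of $\vlp$ (Remark \ref{r-vlp}), which gives the left inequality and shows in particular that $f_N^\ast\in\vlp$ forces $f\in\vhs$.

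For the reverse inequality, the main point is the standard pointwise estimate relating the nontangential maximal function to a radial-type maximal function composed with $\cm$. The key step is to establish that there exist $N'\in\nn$ (with $N'\le N$, guaranteed by \eqref{1nx}) and $r\in(0,p_-)$ such that, for all $f\in\cs'(\rn)$ and all $x\in\rn$,
\begin{equation*}
f_N^\ast(x)\lesssim\lf\{\cm\lf(\lf[f_{N',+}^\ast\r]^r\r)(x)\r\}^{1/r}.
\end{equation*}
This is a classical argument going back to Fefferman--Stein \cite{fs72} and Stein \cite{stein93}: one uses that $f\ast\psi_t(y)$, for $|y-x|<t$, can be dominated by the grand maximal function at points of a ball $B(x,2t)$, and then a subharmonicity/mean-value type inequality (or, more precisely, the Peetre-type estimate for distributions of finite order, exploiting $N$ large as in \eqref{1nx}) converts this supremum into the $L^r$-average of $f_{N',+}^\ast$ over $B(x,2t)$, which is bounded by $\cm([f_{N',+}^\ast]^r)(x)^{1/r}$. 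Here the condition \eqref{1nx}, namely $N>n/p_-+n+1$, is exactly what is needed so that $r$ can be chosen in $(n/(N-n),p_-)$, making the tail integrals in the Peetre estimate converge.

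Granted this pointwise bound, I would finish as follows. Choose $r\in(0,p_-)$ as above; then $p(\cdot)/r\in\cp(\rn)$ with $(p/r)_->1$, so by the boundedness of $\cm$ on $L^{p(\cdot)/r}(\rn)$ (see \cite{cfbook,dhr11}; note that \eqref{ve1} and \eqref{ve2} are precisely the hypotheses making $\cm$ bounded there) we obtain
\begin{equation*}
\lf\|f_N^\ast\r\|_{\vlp}\lesssim\lf\|\lf\{\cm\lf(\lf[f_{N',+}^\ast\r]^r\r)\r\}^{1/r}\r\|_{\vlp}
=\lf\|\cm\lf(\lf[f_{N',+}^\ast\r]^r\r)\r\|_{L^{p(\cdot)/r}(\rn)}^{1/r}
\lesssim\lf\|\lf[f_{N',+}^\ast\r]^r\r\|_{L^{p(\cdot)/r}(\rn)}^{1/r}
=\lf\|f_{N',+}^\ast\r\|_{\vlp},
\end{equation*}
using the homogeneity $\|\,|g|^r\|_{L^{p(\cdot)/r}}=\|g\|_{\vlp}^r$ from Remark \ref{r-vlp}(ii),(iv). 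Finally, since $\vhs$ is independent of the choice of the admissible index (Remark after Definition \ref{d-vhs}(ii), i.e. \cite[Theorem 3.3]{ns12}), $\|f_{N',+}^\ast\|_{\vlp}\sim\|f_{N,+}^\ast\|_{\vlp}=\|f\|_{\vhs}$, which yields $\|f_N^\ast\|_{\vlp}\lesssim\|f\|_{\vhs}$ and completes the proof. The main obstacle is the pointwise estimate in the second paragraph: one must be careful to track how the order of the distribution $f$ enters the Peetre-type inequality and to verify that the constraint on $r$ is compatible with both $r<p_-$ (needed for the maximal inequality after passing to $p(\cdot)/r$) and $r>n/(N-n)$ (needed for convergence), which is exactly where \eqref{1nx} is used.
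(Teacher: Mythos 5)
Your proposal reaches the right conclusion, but it takes a much heavier route than the paper, and the extra machinery is not needed here. The paper's proof of $\|f_N^\ast\|_{\vlp}\ls\|f_{N,+}^\ast\|_{\vlp}$ is a direct \emph{pointwise} comparison: for $\Phi\in\cf_N(\rn)$, $t\in(0,\fz)$ and $|y-x|<t$, the translate $\psi(z):=\Phi(z+(y-x)/t)$ satisfies $2^{-N}\psi\in\cf_N(\rn)$ (the shift has length $<1$, so the weight $(1+|z|)^N$ only costs a factor $2^N$), whence $|f\ast\Phi_t(y)|=|f\ast\psi_t(x)|\le 2^N f_{N,+}^\ast(x)$ and therefore $f_N^\ast(x)\le 2^N f_{N,+}^\ast(x)$ for every $x$. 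This is precisely the advantage of the \emph{grand} maximal function: the family $\cf_N(\rn)$ is stable under bounded translations, so the nontangential supremum collapses to the radial one with no appeal to $\cm$, to Peetre-type estimates, or to the independence of $\vhs$ of $N$. Your route — $f_N^\ast(x)\ls\{\cm([f_{N',+}^\ast]^r)(x)\}^{1/r}$ followed by the boundedness of $\cm$ on $L^{p(\cdot)/r}(\rn)$ — is not wrong (the claimed pointwise bound is true; indeed it is strictly weaker than the direct bound above, since $g\le\{\cm(g^r)\}^{1/r}$ pointwise for the lower semicontinuous function $g=f_{N,+}^\ast\le f_{N',+}^\ast$), but as written it leaves the key Fefferman--Stein/Peetre step unproven, and it does not verify that $N'$ can be chosen to satisfy both $N'\le N$ and \eqref{1nx} so that the final appeal to the $N$-independence of $\vhs$ applies (taking $N'=N$ fixes this). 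Your own first observation, that $f\ast\psi_t(y)$ for $|y-x|<t$ is controlled by grand maximal data near $x$, is exactly where the short translation argument lives; pushing on it directly would have saved you the entire second half of the argument.
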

\begin{proof}
Let $f\in\cs'(\rn)$ and $f^\ast_N\in\vlp$.
Observing that, for all $x\in\rn$, $f_{N,+}^\ast(x)\le f_N^\ast(x)$,
we then conclude that
$\|f\|_{\vhs}=\|f_{N,+}^\ast\|_{\vlp}\le\|f_N^\ast\|_{\vlp}$ and
hence $f\in\vhs$. This finishes the proof of the sufficiency of Proposition \ref{p-vhs-p}.

To prove the necessity, we need to show that, for all
$f\in\vhs$, $\|f_N^\ast\|_{\vlp}\ls\|f\|_{\vhs}$. To this end,
for all $\Phi\in\cf_N(\rn)$, $x\in\rn$, $t\in(0,\fz)$ and $y\in\rn$ with
$|y-x|<t$, let, for all $z\in\rn$,
$\psi(z):=\Phi(z+(y-x)/t).$
Then we see that
\begin{eqnarray*}
\sum_{\beta\in\zz_+^n,\,|\beta|\le N}
\sup_{z\in\rn}(1+|z|)^N|D^\beta\psi(z)|=\sum_{\beta\in\zz_+^n,\,|\beta|\le N}
\sup_{z\in\rn}\lf(1+\lf|z-\frac{y-z}{t}\r|\r)^N|D^\beta\Phi(z)|
\le2^N,
\end{eqnarray*}
which implies that $2^{-N}\psi\in \cf_N(\rn)$. From this, we deduce that
\begin{eqnarray*}
|f\ast\Phi_t(y)|=|f\ast\psi_t(x)|\le 2^N f_{N,+}^\ast(x),
\end{eqnarray*}
and hence $f_{N}^\ast(x)\ls f_{N,+}^\ast(x)$ for all $x\in\rn$,
which further implies that
$$\|f_N^\ast\|_{\vlp}\ls\|f_{N,+}^\ast\|_{\vlp}\sim\|f\|_{\vhs}.$$
This finishes the proof of the necessity part and hence
Proposition \ref{p-vhs-p}.
\end{proof}

\begin{corollary}\label{c-vhs-p}
Let $p(\cdot)$ be as in Proposition \ref{p-vhs-p} and $f\in \vhs$.
Then $f$ vanishes weakly at infinity.
\end{corollary}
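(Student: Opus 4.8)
The plan is to show that, for every $\phi\in\cs(\rn)$, the family $\{f\ast\phi_t\}_{t>0}$ tends to $0$ in $\cs'(\rn)$ as $t\to\fz$, by testing against an arbitrary $h\in\cs(\rn)$ and exploiting the fact that $f\in\vhs$ forces a decay estimate on $f\ast\phi_t$ through the nontangential maximal function controlled in Proposition \ref{p-vhs-p}. First I would fix $\phi,h\in\cs(\rn)$ and write $\langle f\ast\phi_t,h\rangle=\langle f,\wz\phi_t\ast h\rangle$ where $\wz\phi(x):=\phi(-x)$, so the task reduces to estimating $|\langle f,\wz\phi_t\ast h\rangle|$ for large $t$. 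For $t$ large, $\wz\phi_t\ast h$ is, up to a normalizing dilation, close to a fixed Schwartz function: more precisely, after rescaling one recognizes that $t^{n}(\wz\phi_t\ast h)(t\,\cdot)$ behaves like $(\wz\phi\ast h_{1/t})(\cdot)$, which converges in $\cs(\rn)$ to $\lf(\int_\rn h\r)\wz\phi$ as $t\to\fz$; in particular the seminorms appearing in the definition \eqref{fnd} of $\cf_N(\rn)$ of this rescaled function stay bounded. Hence there is a constant $c>0$, depending on $\phi$, $h$ and $N$, such that $c\,t^{-n}\,\wz\phi_t\ast h\lf(t(\cdot-x)+x\r)$, viewed as a test function, lies in $\cf_N(\rn)$ for all $t$ large and all $x$; this will let me bound $|f\ast\phi_t(x)|$ pointwise by $c^{-1}t^{n}\,f^\ast_N(x)$ type quantities, but one must be careful because the center of the bump wanders.

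The cleaner route I would actually follow is to pair directly: for $x\in\rn$ and $t>1$, the function $z\mapsto c\,\phi_t(x-z)$ (after the dilation $z=x+t\,w$) becomes $c\,t^{-n}\phi(-w)$, and for $c$ small enough depending only on $\phi$ this is a member of $\cf_N(\rn)$; thus $|f\ast\phi_t(x)|\le c^{-1}t^{-n}\sup_{|y-x|<t}|f\ast(c\,\phi)_t(y)|\le c^{-1}t^{-n}f_N^\ast(x)$ for all $t\in(0,\fz)$, i.e.\ $|f\ast\phi_t(x)|\ls t^{-n}f_N^\ast(x)$ uniformly. Next I would integrate against $h$: $|\langle f\ast\phi_t,h\rangle|\le\int_\rn|f\ast\phi_t(x)|\,|h(x)|\,dx\ls t^{-n}\int_\rn f_N^\ast(x)|h(x)|\,dx$. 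To see the last integral is finite, apply the generalized Hölder inequality for variable Lebesgue spaces (see \cite[Theorem 2.17]{cfbook} or \cite[Lemma 2.1]{ns12}): $\int_\rn f_N^\ast(x)|h(x)|\,dx\ls\|f_N^\ast\|_{\vlp}\|h\|_{L^{p'(\cdot)}(\rn)}$, and since $h\in\cs(\rn)$ decays rapidly it lies in the associate space $L^{p'(\cdot)}(\rn)$ (here one uses $0<p_-\le p_+<\fz$ together with the log-Hölder and decay conditions \eqref{ve1}, \eqref{ve2} to guarantee $L^{p(\cdot)}(\rn)$ is a Banach function space with nontrivial associate space, or more elementarily splits the integral over $|x|\le1$ and $|x|>1$ using $\|f_N^\ast\chi_{B}\|_{L^1}\ls\|f_N^\ast\|_{\vlp}\|\chi_B\|_{L^{p'(\cdot)}}$ and the rapid decay of $h$). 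By Proposition \ref{p-vhs-p}, $\|f_N^\ast\|_{\vlp}\sim\|f\|_{\vhs}<\fz$, so the integral is a finite constant independent of $t$, and therefore $|\langle f\ast\phi_t,h\rangle|\ls t^{-n}\|f\|_{\vhs}\|h\|_{L^{p'(\cdot)}(\rn)}\to0$ as $t\to\fz$.

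Since $h\in\cs(\rn)$ was arbitrary, this proves $f\ast\phi_t\to0$ in $\cs'(\rn)$, i.e.\ $f$ vanishes weakly at infinity. The main obstacle, and the step needing the most care, is the membership claim that a suitably normalized and recentered copy of $\phi_t$ lies in $\cf_N(\rn)$ with a constant uniform in $t$ and in the base point $x$: one must track how the weight $(1+|z|)^N|D^\beta\psi(z)|$ transforms under the translation $z\mapsto z+(y-x)/t$ with $|y-x|<t$, exactly as in the computation already carried out in the proof of Proposition \ref{p-vhs-p}, which shows such translations cost only a factor $2^N$. A secondary technical point is justifying $\int_\rn f_N^\ast|h|\,dx<\fz$; the quickest self-contained argument decomposes $\rn=\bigcup_{k\ge0}A_k$ with $A_0:=B(0,1)$ and $A_k:=B(0,2^k)\setminus B(0,2^{k-1})$, bounds $\int_{A_k}f_N^\ast|h|\ls\|f_N^\ast\|_{\vlp}\,\|\chi_{B(0,2^k)}\|_{L^{p'(\cdot)}(\rn)}\,\sup_{x\in A_k}|h(x)|$, notes $\|\chi_{B(0,2^k)}\|_{L^{p'(\cdot)}(\rn)}$ grows at most polynomially in $2^k$ (a consequence of \eqref{ve2}), and uses the rapid decay of $h$ to sum over $k$. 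With these two points in hand the corollary follows.
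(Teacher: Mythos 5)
There is a genuine gap, and it sits at the heart of your argument: the pointwise bound $|f\ast\phi_t(x)|\ls t^{-n}f_N^\ast(x)$ is not justified and is in fact false in general. The normalization $t^{-n}$ is already built into the dilation $\psi_t(\xi):=t^{-n}\psi(\xi/t)$ appearing in the definitions of $f_{N,+}^\ast$ and $f_N^\ast$; once you observe that $c\,\phi\in\cf_N(\rn)$ for a suitable constant $c$, the definition of the grand maximal function gives only $|f\ast\phi_t(x)|=c^{-1}|f\ast(c\phi)_t(x)|\le c^{-1}f_{N,+}^\ast(x)\le c^{-1}f_N^\ast(x)$, uniformly in $t$ and with no decay. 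Your rescaling $z=x+tw$ simply re-derives the factor $t^{-n}$ that is already accounted for; extracting it a second time is the error. One can also see the claimed inequality cannot hold with a constant independent of $f$ by a scaling test: replacing $f$ by $f_\lz:=\lz^nf(\lz\cdot)$ and letting $\lz\to0$ in the inequality would force $f\ast\phi_s\equiv0$. Without the $t^{-n}$, your final estimate reads $|\langle f\ast\phi_t,h\rangle|\ls\int_\rn f_N^\ast|h|\,dx$, which is uniform in $t$ but does not tend to zero, so the proof does not close. A secondary problem is the H\"older step $\int_\rn f_N^\ast|h|\,dx\ls\|f_N^\ast\|_{\vlp}\|h\|_{L^{p'(\cdot)}(\rn)}$: since $p_-$ may be less than $1$, the exponent $p'(\cdot)$ and the associate-space duality are unavailable, and indeed $f_N^\ast\in\vlp$ does not even guarantee local integrability of $f_N^\ast$, so that integral can be infinite.

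The decay in $t$ has to come from the size of the ball $B(x,t)$, not from the normalization of $\phi_t$, and this is exactly what the paper's proof does. The key observation is that the \emph{nontangential} maximal function satisfies $|f\ast\phi_t(x)|\ls f_N^\ast(y)$ for \emph{every} $y\in B(x,t)$ (the point $(x,t)$ lies in the cone over each such $y$). Taking the infimum of $\min\{[f_N^\ast(y)]^{p_+},[f_N^\ast(y)]^{p_-}\}$ over the large ball $B(x,t)$, bounding the infimum by the average, and invoking the unit-modular identity of Remark \ref{r-vlp}(iv) yields
$$\min\lf\{|f\ast\phi_t(x)|^{p_+},|f\ast\phi_t(x)|^{p_-}\r\}\ls|B(x,t)|^{-1}
\max\lf\{\|f_N^\ast\|_{\vlp}^{p_-},\|f_N^\ast\|_{\vlp}^{p_+}\r\}\to0$$
as $t\to\fz$, uniformly in $x$, which (after pairing with $h\in\cs(\rn)$) gives the weak vanishing at infinity. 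If you want to repair your argument, this infimum-over-the-ball trick is the missing ingredient.
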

\begin{proof}
Observe that, for any $f\in \vhs$ with $\|f\|_{\vhs}\neq0$, $\phi\in\cs(\rn)$, $x\in\rn$, $t\in(0,\fz)$ and
$y\in B(x,t)$, $|f\ast \phi_t(x)|\ls f_N^\ast (y)$,
where $f_N^\ast$ is as in \eqref{d-nmf} with $N$ as in \eqref{1nx}.
By this and Remark \ref{r-vlp}(iv), we see that
\begin{eqnarray*}
\min\{|f\ast \phi_t(x)|^{p_+},\,|f\ast \phi_t(x)|^{p_-}\}
&&\ls{\inf_{y\in B(x,t)}\min\{[f_N^\ast (y)]^{p_+},\,[f_N^\ast (y)]^{p_-}\}}\\
&&\ls\int_{B(x,t)}\min\{[f_N^\ast (y)]^{p_+},\,[f_N^\ast (y)]^{p_-}\}
\,dy|B(x,t)|^{-1}\\
&&\ls\int_{B(x,t)}[f_N^\ast(y)]^{p(y)}\,dy|B(x,t)|^{-1}\\
&&\ls\int_\rn\lf[\frac{f_N^\ast (y)}{\|f_N^\ast\|_{\vlp}}\r]^{p(y)}
\|f_N^\ast\|_{\vlp}^{p(y)}\,dy|B(x,t)|^{-1}\\
&&\ls|B(x,t)|^{-1}\max\{\|f_N^\ast\|_{\vlp}^{p_-},\,\|f_N^\ast\|_{\vlp}^{p_+}\}
\to0,
\end{eqnarray*}
as $t\to\fz$, which implies that $f$ vanishes weakly at infinity. This finishes
the proof of Corollary \ref{c-vhs-p}.
\end{proof}
In what follows, denote by $P_{\rm poly}(\rn)$ the \emph{set of all polynomials}
on $\rn$. For $f\in\cs_\fz'(\rn)$ and
$\phi\in\cs(\rn)$ satisfying \eqref{function1} and \eqref{function2},
let
$$\sigma(f)(x):=\lf[\sum_{j\in\zz}|\phi_j\ast f(x)|^2\r]^{1/2}$$
and
$$H_{\sigma}^{p(\cdot)}(\rn):=\lf\{f\in\cs_\fz'(\rn):\
\|f\|_{H_{\sigma}^{p(\cdot)}(\rn)}:=\lf\|\sigma(f)\r\|_{\vlp}<\fz\r\}.$$

\begin{proposition}\label{p-lpd}
Let $p(\cdot)\in\cp(\rn)$ satisfy \eqref{ve1} and \eqref{ve2}. Then
$H^{p(\cdot)}(\rn)=H_{\sigma}^{p(\cdot)}(\rn)$ in the following sense:
if $f\in\vhs$, then $f\in H_{\sigma}^{p(\cdot)}(\rn)$ and there exists a positive
constant $C$ such that, for all $f\in\vhs$,
$\|f\|_{H_{\sigma}^{p(\cdot)}(\rn)}\le C\|f\|_{\vhs}$;
conversely, if $f\in H_{\sigma}^{p(\cdot)}(\rn)$, then there exists a unique
extension $\wz f \in\cs'(\rn)$ such that, for all $h\in\cs_\fz(\rn)$,
$\langle \wz f,h\rangle=\langle f,h\rangle$ and
$\|\wz f\|_{\vhs}\le C\|f\|_{H_{\sigma}^{p(\cdot)}(\rn)}$
with $C$ being a positive constant independent of $f$.
\end{proposition}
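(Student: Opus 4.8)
The plan is to prove the two implications separately, reducing in each case to the nontangential maximal function via Proposition \ref{p-vhs-p} and to a vector-valued maximal inequality on $\vlp$. First I fix, once and for all, a radial $\psi\in\cs(\rn)$ with $\supp\wh\psi\subset\{\xi\in\rn:\ 1/2\le|\xi|\le2\}$, bounded below on $\{3/5\le|\xi|\le5/3\}$, and $\sum_{k\in\zz}\wh\phi(2^{-k}\xi)\wh\psi(2^{-k}\xi)=1$ for all $\xi\in\rn\setminus\{0\}$; such a $\psi$ exists because $\phi$ satisfies \eqref{function1} and \eqref{function2}, and then $\psi\in\cs_\fz(\rn)$ and the Calder\'on reproducing formula $h=\sum_{k\in\zz}\psi_k\ast\phi_k\ast h$ holds in $\cs_\fz'(\rn)$ for every $h\in\cs_\fz'(\rn)$ (note that $\phi_j\ast h$ makes sense for $h\in\cs_\fz'(\rn)$ and is independent of the chosen $\cs'(\rn)$-extension, since $\phi_j$ has all vanishing moments). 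Throughout I shall also use the Fefferman--Stein $\ell^u$-valued maximal inequality on $L^{q(\cdot)}(\rn)$ ($1<u\le\fz$), which is available here: by \eqref{ve1} and \eqref{ve2}, $p(\cdot)/r$ is globally log-H\"older continuous for every $r\in(0,p_-)$, so the inequality applies with $q(\cdot):=p(\cdot)/r$ for $r\in(0,\min\{1,p_-\})$ chosen small; this is exactly the device that absorbs the possibility $p_-\le1$.

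\emph{Step 1 ($\vhs\subset H_\sigma^{p(\cdot)}$).} Let $f\in\vhs$. By the Littlewood--Paley decomposition of $\vhs$ proved in \cite{ns12} there is a Schwartz family $\{\varphi_k\}_{k\in\zz}$, with $\wh\varphi$ supported in a fixed annulus and bounded below on a smaller one, such that $\|(\sum_k|\varphi_k\ast f|^2)^{1/2}\|_{\vlp}\ls\|f\|_{\vhs}$ and $f=\sum_k\wz\varphi_k\ast(\varphi_k\ast f)$ for the associated partner $\wz\varphi$. Convolving with $\phi_j$ and using that $\wh\phi$, $\wh\varphi$, $\wh{\wz\varphi}$ are supported in fixed annuli (hence the almost-orthogonality decay $2^{-|j-k|L}$ with $L$ arbitrary), I obtain $|\phi_j\ast f(x)|\ls\sum_k2^{-|j-k|L}\cm(\varphi_k\ast f)(x)$ for all $x\in\rn$, whence $\sigma(f)(x)\ls\{\sum_k[\cm(\varphi_k\ast f)(x)]^2\}^{1/2}$; the $\ell^{2/r}$-valued Fefferman--Stein inequality on $L^{p(\cdot)/r}$ then gives $\|\sigma(f)\|_{\vlp}\ls\|(\sum_k|\varphi_k\ast f|^2)^{1/2}\|_{\vlp}\ls\|f\|_{\vhs}$, so $f\in H_\sigma^{p(\cdot)}(\rn)$. (Alternatively, the same inequality can be derived from the atomic decomposition of $\vhs$ in \cite{ns12} by the usual size and cancellation estimates for $\sigma$ on atoms.)

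\emph{Step 2 (the extension and $H_\sigma^{p(\cdot)}\subset\vhs$).} Conversely, let $f\in\cs_\fz'(\rn)$ with $\sigma(f)\in\vlp$. I would set $\wz f:=\sum_{k\in\zz}\psi_k\ast(\phi_k\ast f)$, now read in $\cs'(\rn)$: for $\zeta\in\cs(\rn)$ one has $\langle\psi_k\ast\phi_k\ast f,\zeta\rangle=\langle\phi_k\ast f,\wz\psi_k\ast\zeta\rangle$ with $\wz\psi(\cdot):=\psi(-\cdot)$, and since $\phi_k\ast f$ grows at most polynomially while $\wz\psi$ is Schwartz with all vanishing moments, one obtains a summable bound $|\langle\phi_k\ast f,\wz\psi_k\ast\zeta\rangle|\ls2^{-|k|L}$ (for $k\to+\fz$ from the decay and smoothness of $\zeta$, for $k\to-\fz$ from the vanishing moments of $\psi$), so the series converges in $\cs'(\rn)$. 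Moreover $\langle\wz f,h\rangle=\langle f,h\rangle$ for $h\in\cs_\fz(\rn)$, and $\wz f$ is the unique extension of $f$ belonging to $\vhs$ (equivalently, the unique one vanishing weakly at infinity), because any two extensions differ by a polynomial and no nonzero polynomial lies in $\vhs$ or vanishes weakly at infinity. Next, adapting the Peetre maximal function machinery of \cite[Theorem 2.8]{ut11} (see also \cite[Theorem 3.2]{lsuyy}), I would establish the pointwise bound $\wz f^\ast_N(x)\ls\sigma_{a,\ast}(f)(x)$ for $N$, $a$ large: for $\Theta\in\cf_N(\rn)$, $s\in(0,\fz)$ and $|y-x|<s$, write $\Theta_s\ast\wz f(y)=\sum_k\Theta_s\ast\psi_k\ast(\phi_k\ast f)(y)$, and combine the almost-orthogonality estimate — which, from the vanishing moments of $\psi$ together with a re-expansion of $\phi_k\ast f$ by the reproducing formula, controls $\Theta_s\ast\psi_k\ast(\phi_k\ast f)(y)$ by $\min\{(s2^k)^{L},(s2^k)^{-L}\}(\phi_k^\ast f)_a(x)$ with $L$ arbitrarily large — to get $|\Theta_s\ast\wz f(y)|\ls\sum_k\min\{(s2^k)^{L},(s2^k)^{-L}\}(\phi_k^\ast f)_a(x)\ls\sigma_{a,\ast}(f)(x)$ by the Cauchy--Schwarz inequality. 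Finally, since each $\phi_k\ast f$ is band-limited to frequencies $\ls2^k$, the standard estimate $(\phi_k^\ast f)_a(x)\ls[\cm(|\phi_k\ast f|^r)(x)]^{1/r}$ holds for $ar>n$; taking such $a$ and $r\in(0,\min\{1,p_-\})$ and using Fefferman--Stein on $L^{p(\cdot)/r}$ gives $\|\sigma_{a,\ast}(f)\|_{\vlp}\ls\|(\sum_k|\phi_k\ast f|^2)^{1/2}\|_{\vlp}=\|\sigma(f)\|_{\vlp}$. Combining, $\|\wz f^\ast_N\|_{\vlp}\ls\|\sigma(f)\|_{\vlp}$, and Proposition \ref{p-vhs-p} then yields $\wz f\in\vhs$ with $\|\wz f\|_{\vhs}\ls\|\sigma(f)\|_{\vlp}$, as required; when $f$ itself comes from $\vhs$, Corollary \ref{c-vhs-p} shows that this $\wz f$ coincides with the original $f$.

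\emph{Where the difficulty lies.} The genuinely delicate point is the almost-orthogonality estimate in Step 2: a test function $\Theta\in\cf_N(\rn)$ carries no cancellation of its own, so extracting the full decay $\min\{(s2^k)^{L},(s2^k)^{-L}\}$ in the ``coarse'' regime $s2^k\le1$ forces one to re-expand $\phi_k\ast f$ through the reproducing formula and exploit the cancellation of $\psi\ast\phi$, after which the $k$-sum must be shown to be $\ell^2$-summable uniformly in $s$; this is the part borrowed and adapted from \cite{ut11,lsuyy}. The second nontrivial issue, pervading both steps, is that the $\ell^2$-valued maximal bounds live naturally in $L^2$ or in $L^{p(\cdot)}$ only when $p_->1$, so everything must be pushed down to $L^{p(\cdot)/r}$ with $r<\min\{1,p_-\}$, and one must check — using precisely \eqref{ve1} and \eqref{ve2} — that $p(\cdot)/r$ still satisfies the hypotheses of the Fefferman--Stein inequality. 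The $\cs_\fz'(\rn)$-versus-$\cs'(\rn)$ bookkeeping in Step 2 is a further technical, but by now standard, point.
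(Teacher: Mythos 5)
Your argument is correct in outline, but it takes a much longer road than the paper does, so it is worth recording the difference. The paper's proof of this proposition is essentially a citation: both inequalities $\|\sigma(f)\|_{\vlp}\ls\|f\|_{\vhs}$ and $\|\wz f\|_{\vhs}\ls\|\sigma(f)\|_{\vlp}$ are taken directly from \cite[Theorem 5.7]{ns12}, the existence of an $\cs'(\rn)$-extension of an element of $\cs_\fz'(\rn)$ modulo polynomials is taken from \cite[Proposition 2.3.25]{gra08}, and the only work actually done is the uniqueness argument (two extensions differ by a polynomial, and nonzero polynomials fail to vanish weakly at infinity, so by Corollary \ref{c-vhs-p} at most one extension can lie in $\vhs$) --- which you reproduce. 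What you do instead is re-prove the cited Littlewood--Paley characterization from scratch: your Step 1 is the standard almost-orthogonality argument showing that the discrete square-function quasi-norm is independent of the chosen Littlewood--Paley family, and your Step 2 is the Peetre-maximal-function scheme of \cite{ut11,lsuyy} (bound $\wz f_N^\ast$ pointwise by $\sigma_{a,\ast}(f)$, then bound $\|\sigma_{a,\ast}(f)\|_{\vlp}$ by $\|\sigma(f)\|_{\vlp}$ via Lemma \ref{l-peetre} and Lemma \ref{l-hlmo}, and finish with Proposition \ref{p-vhs-p}). This is exactly the machinery the paper deploys, but only later and for a different purpose, in the chain \eqref{equivalen-3}--\eqref{equivalen-5} of the proof of Theorem \ref{t-equivalen}. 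So your proof is self-contained where the paper leans on \cite{ns12}, at the cost of duplicating work the paper postpones.

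One step of your Step 2 needs repair. In proving that $\wz f=\sum_{k}\psi_k\ast\phi_k\ast f$ converges in $\cs'(\rn)$, you attribute the summability of the coarse-scale tail ($k\to-\fz$) to the vanishing moments of $\psi$. That is not the operative mechanism: for a generic $\zeta\in\cs(\rn)$ with $\int_\rn\zeta(x)\,dx\neq0$, the quantity $\|\wz\psi_k\ast\zeta\|_{L^1(\rn)}$ does not decay at all as $k\to-\fz$ (the moments of $\psi$ only pay off at fine scales, where one Taylor-expands $\zeta$). Likewise, ``$\phi_k\ast f$ grows at most polynomially'' holds for each fixed $k$ but with constants that are a priori uncontrolled in $k$ for a general $f\in\cs_\fz'(\rn)$. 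The decay as $k\to-\fz$ must come from the quantitative hypothesis $\sigma(f)\in\vlp$: since $\phi_k\ast f$ is band-limited to $|\xi|\sim2^k$ and $\|\phi_k\ast f\|_{\vlp}\le\|\sigma(f)\|_{\vlp}$ uniformly in $k$, a Plancherel--P\'olya--Nikolskii type inequality gives $\|\phi_k\ast f\|_{L^\fz(\rn)}\ls2^{kn/p_-}\|\sigma(f)\|_{\vlp}$, which tends to $0$ geometrically and makes the pairing summable. With that correction your construction goes through; alternatively the whole issue can be sidestepped, as the paper does, by taking an abstract extension from \cite[Proposition 2.3.25]{gra08} and noting that $\phi_j\ast\wz f=\phi_j\ast f$ for every $j$.
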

\begin{proof}
Let $f\in \vhs$.
Then $f\in\cs'(\rn)\subset\cs_\fz'(\rn)$ and, by
\cite[Theorem 5.7]{ns12} (see also \cite[Theorem 3.1]{sawa13}), we see that
$\|f\|_{H_{\sigma}^{p(\cdot)}(\rn)}\ls\|f\|_{\vhs}$ and hence
$f\in H_{\sigma}^{p(\cdot)}(\rn)$.

Conversely, let $f\in H_{\sigma}^{p(\cdot)}(\rn)$. Then $f\in\cs_\fz'(\rn)$.
From \cite[Proposition 2.3.25]{gra08},
we deduce that there exists $\wz f\in\cs'(\rn)$
such that $f-\wz f\in P_{\rm poly}(\rn)$.
By \cite[Theorem 5.7]{ns12} and the fact that
$\phi_j\ast f=\phi_j\ast \wz f$ for all $j\in\zz$ and $\phi$ as in definition
of $\sigma(f)$, we know that
\begin{equation*}
\|\wz f\|_{\vhs}\ls\|\sigma(\wz f)\|_{\vlp}\sim\|\sigma(f)\|_{\vlp}
\sim\|f\|_{H_{\sigma}^{p(\cdot)}(\rn)},
\end{equation*}
which implies that $\wz f\in \vhs$.

Suppose that there exists another extension of $f$, for example, $\wz g\in\vhs$.
Then $\wz g\in\cs'(\rn)$ and $\wz g=f$ in $\cs_\fz'(\rn)$,
which, together with \cite[Proposition 2.3.25]{gra08}, implies $\wz g-\wz f\in P_{\rm poly}(\rn)$.
From this, $\wz g-\wz f\in\vhs$ and
Corollary \ref{c-vhs-p}, we deduce that $\wz g=\wz f$ since nonzero polynomials
fail to vanish weakly at infinity. Therefore, $\wz f$ is the unique extension
of $f\in H_{\sigma}^{p(\cdot)}(\rn)$, which completes the proof of Proposition
\ref{p-lpd}.
\end{proof}

The following estimate is a special case of \cite[Lemma 3.5]{lsuyy}, which is further traced back to \cite[(2.29)]{ut11} and the argument used in the proof
of \cite[Theorem 2.6]{ut11} (see also \cite[Theorem 3.2]{lsuyy}),
\begin{lemma}\label{l-peetre}
Let $f\in\cs_\fz'(\rn)$, $N_0\in\nn$ and $\Phi\in\cs(\rn)$
satisfy \eqref{function1}
and \eqref{function2}. Then, for all $t\in[1,2]$, $a\in(0,N_0]$, $l\in\zz$
and $x\in\rn$,
it holds true that
\begin{equation*}
\lf[(\Phi_{2^{-l}t}^\ast f)_a(x)\r]^r
\le C_{(r)}\sum_{k=0}^\fz2^{-kN_0r}2^{(k+l)n}
\int_\rn\frac{|(\Phi_{k+l})_t\ast f(y)|^r}{(1+2^l|x-y|)^{ar}}\,dy,
\end{equation*}
where $r$ is an arbitrary fixed positive number and $C_{(r)}$ a positive
constant independent of $\Phi,\ f,\ l,\ t$, but may depend on $r$.
\end{lemma}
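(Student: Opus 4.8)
This is a special case of \cite[Lemma 3.5]{lsuyy}, and we indicate its proof, which follows the argument in \cite[Theorem 2.6]{ut11}. The plan is to combine a discrete Calder\'on reproducing formula adapted to $\Phi$ with an almost orthogonality estimate and the Plancherel--P\'olya inequality for band-limited functions.

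First I would set up the reproducing formula. Using \eqref{function1} and \eqref{function2}, together with the fact that the dilated annuli $\{\xi\in\rn:\ 3/5\le 2^{-m}|\xi|\le5/3\}$, $m\in\zz$, cover $\rn\setminus\{0\}$, one constructs $\Psi\in\cs(\rn)$ with $\wh\Psi$ compactly supported and vanishing at the origin to a sufficiently high (finite) order, such that $\sum_{m\in\zz}\wh\Psi(2^{-m}\xi)\wh\Phi(2^{-m}\xi)=1$ for all $\xi\in\rn\setminus\{0\}$. This gives, for every $f\in\cs_\fz'(\rn)$ and every $t\in[1,2]$,
\begin{equation*}
f=\sum_{m\in\zz}(\Psi_m)_t\ast(\Phi_m)_t\ast f\quad\text{in}\quad\cs_\fz'(\rn),
\end{equation*}
where $(\Psi_m)_t:=\Psi_{2^{-m}t}$ and $(\Phi_m)_t:=\Phi_{2^{-m}t}$. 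Convolving this identity with $(\Phi_l)_t$ and invoking the Fourier support of $\Phi$, only finitely many terms with $m<l$ survive; these are of scale comparable to $2^{-l}$ and are controlled by a routine estimate, so that it suffices to treat the tail, namely the terms with $m=l+k$, $k\in\zz_+$.

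Next I would exploit almost orthogonality. Writing $(\Phi_l)_t\ast f=\sum_{k\in\zz_+}K_{l,k}\ast[(\Phi_{l+k})_t\ast f]$ with $K_{l,k}:=(\Phi_l)_t\ast(\Psi_{l+k})_t$, the vanishing moments of $\Psi$ and the Schwartz decay of $\Psi$ and $\Phi$ yield, for every $L\in(0,\fz)$ and all $k\in\zz_+$, $l\in\zz$, $t\in[1,2]$ and $u\in\rn$,
\begin{equation*}
|K_{l,k}(u)|\ls 2^{-kN_1}\,2^{ln}\,(1+2^l|u|)^{-L},
\end{equation*}
with an implicit constant independent of $k,l,t$, where $N_1\in(0,\fz)$ can be taken as large as we wish by prescribing $\wh\Psi$ to vanish to sufficiently high order at the origin, and where $t\sim1$ has been used to replace $2^{-l}t$ by $2^{-l}$. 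Inserting this into the series gives the pointwise bound
\begin{equation*}
|(\Phi_l)_t\ast f(x+y)|\ls\sum_{k\in\zz_+}2^{-kN_1}\,2^{ln}\int_\rn\frac{|(\Phi_{l+k})_t\ast f(z)|}{(1+2^l|x+y-z|)^L}\,dz .
\end{equation*}

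The last, and main, step is to convert this into the asserted estimate. Since $(\Phi_{l+k})_t\ast f$ has Fourier transform supported in a ball of radius comparable to $2^{l+k}$, the Plancherel--P\'olya inequality for band-limited functions lets one replace the pointwise value $|(\Phi_{l+k})_t\ast f(z)|$ inside the integral by the corresponding $L^r$-average at the scale $2^{-(l+k)}$, which produces both the density factor $2^{(k+l)n}$ and the exponent $r$ on the right-hand side. Performing this substitution, taking the supremum over $y$ by means of the elementary inequality
\begin{equation*}
1+2^l|x-\tau|\le(1+2^l|y|)(1+2^l|x+y-z|)(1+2^l|z-\tau|),\qquad x,y,z,\tau\in\rn,
\end{equation*}
raising the resulting inequality to the power $r$ (using $(\sum_i|b_i|)^r\le\sum_i|b_i|^r$ when $r\in(0,1]$ and H\"older's inequality when $r\in(1,\fz)$, the loss in the latter case being absorbed by enlarging $N_1$), and finally choosing $N_1\ge N_0$ and the decay exponent $L$ large enough in terms of $n$, $a$ and $r$, one obtains
\begin{equation*}
\lf[(\Phi_{2^{-l}t}^\ast f)_a(x)\r]^r\ls\sum_{k=0}^\fz 2^{-kN_0r}\,2^{(k+l)n}\int_\rn\frac{|(\Phi_{k+l})_t\ast f(y)|^r}{(1+2^l|x-y|)^{ar}}\,dy,
\end{equation*}
as desired. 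I expect this last step to be the delicate one: it requires combining the almost orthogonality estimate with the Plancherel--P\'olya inequality so as to generate simultaneously the density $2^{(k+l)n}$, the weight $(1+2^l|x-y|)^{-ar}$ and the geometric decay $2^{-kN_0r}$, and thus forces one to balance the exponent $r$ (which may be smaller than $1$), the Peetre exponent $a\in(0,N_0]$ and the decay exponent $L$ at the same time; it is exactly here that the hypothesis $a\le N_0$ and the freedom in the number of vanishing moments of $\Psi$ enter.
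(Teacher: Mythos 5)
The paper does not actually prove this lemma: it is quoted as a special case of \cite[Lemma 3.5]{lsuyy}, which in turn rests on \cite[(2.29)]{ut11} and the proof of \cite[Theorem 2.6]{ut11}. Your sketch follows the same Rychkov--Ullrich route as those references, so in outline you are on the right track; but the two steps you describe as routine are precisely the crux of that argument, and as written they do not go through.

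First, the coarser scales. Since $\wh\Phi$ and (effectively) $\wh\Psi$ are supported in the annulus $\{1/2\le|\xi|\le2\}$, convolving the reproducing formula with $\Phi_{2^{-l}t}$ leaves exactly the terms $m\in\{l-2,\dots,l+2\}$, and the terms $m=l-1,\,l-2$ contribute $\Phi_{2^{-(l-1)}t}\ast f$ and $\Phi_{2^{-(l-2)}t}\ast f$ to the pointwise bound. These are data at scales strictly coarser than $2^{-l}t$ and do not occur on the right-hand side of the asserted inequality, which involves only $(\Phi_{l+k})_t\ast f$ with $k\ge0$; no ``routine estimate'' converts $2^{(l-1)n}\int_\rn|(\Phi_{l-1})_t\ast f(y)|^r(1+2^l|x-y|)^{-ar}\,dy$ into the stated sum, and applying the lemma recursively at level $l-1$ only pushes the problem to ever coarser scales. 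You must either organize the decomposition so that no coarser scales appear at all, or replace this part of the argument by the sub-mean-value (Plancherel--P\'olya) property of the band-limited function $\Phi_{2^{-l}t}\ast f$ itself; the latter is what the quoted sources effectively do.

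Second, the range of $a$. The lemma allows any $a\in(0,N_0]$, in particular $ar\le n$. The Plancherel--P\'olya substitution you invoke yields the weight $(1+2^l|x-y|)^{-ar}$ only when $ar>n$; for $ar\le n$ there is no inequality of the form $|g(z)|\le C\,[2^{jn}\int_\rn|g(w)|^r(1+2^j|z-w|)^{-ar}\,dw]^{1/r}$ for band-limited $g$, and one must instead run Rychkov's division-by-the-maximal-function argument: write $|g|=|g|^r|g|^{1-r}$, extract the factor $[(\Phi_{2^{-l}t}^\ast f)_a(x)]^{1-r}$, and divide it back out --- which in turn requires an a priori proof that this Peetre maximal function is finite (first for a large exponent $A$ determined by the order of $f\in\cs_\fz'(\rn)$, then an iteration bringing $A$ down to the given $a$). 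Your closing remarks acknowledge that balancing $r$, $a$ and the decay exponent is delicate, but the sketch contains no mechanism for the case $ar\le n$, which is exactly the case in which the full sum over $k$, rather than the single term $k=0$, is needed. Both points are carried out in \cite{ut11} and \cite{lsuyy}, which is why the paper is content to cite them rather than reprove the estimate.
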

We point out that Lemma \ref{l-peetre} plays an important
role in the proof of Theorem
\ref{t-equivalen}.

The following vector-valued inequality on the boundedness of the
Hardy-Littlewood maximal operator $\cm$
on the variable Lebesgue space $\vlp$ was obtained in \cite[Corollary 2.1]{cfmp06}.

\begin{lemma}\label{l-hlmo}
Let $r\in(1,\fz)$.
Assume that $p(\cdot):\ \rn\to[0,\fz)$ is a measurable function satisfying
\eqref{ve1}, \eqref{ve2} and $1<p_-\le p_+<\fz$, then there exists a positive
constant $C_0$ such that, for all sequences $\{f_j\}_{j=1}^\fz$ of measurable
functions,
$$\lf\|\lf[\sum_{j=1}^\fz (\cm f_j)^r\r]^{1/r}\r\|_{\vlp}
\le C_0\lf\|\lf(\sum_{j=1}^\fz|f_j|^r\r)^{1/r}\r\|_{\vlp}.$$
\end{lemma}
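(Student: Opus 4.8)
The plan is to derive Lemma~\ref{l-hlmo} from the Rubio de Francia extrapolation method on variable exponent Lebesgue spaces, combined with the classical weighted vector-valued maximal inequality. The starting point is the \emph{scalar} boundedness of $\cm$ on $\vlp$: under \eqref{ve1}, \eqref{ve2} and $1<p_-\le p_+<\fz$, the operator $\cm$ is bounded on $\vlp$ (see \cite{cfbook}). First I would record that these hypotheses are stable under the two operations on exponents that occur below: passing to the conjugate exponent $p'(\cdot)$, and, for any fixed $p_0\in(1,p_-)$, passing to $(p(\cdot)/p_0)'$. Indeed, $1/p(\cdot)$ being locally log-H\"older continuous with a limit at infinity forces the same for $p_0/p(\cdot)$ and then for $1-p_0/p(\cdot)$, while the strict inequality $p_0<p_-$ keeps $p(\cdot)/p_0$ bounded below by $p_-/p_0>1$, so $(p(\cdot)/p_0)'$ again lies in the admissible class; hence $\cm$ is bounded on $L^{(p(\cdot)/p_0)'}(\rn)$.

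Next I would invoke the classical Andersen--John weighted estimate: for all $p_0\in(1,\fz)$, $r\in(1,\fz)$ and $w\in A_{p_0}(\rn)$,
$$\int_\rn\lf[\sum_{j=1}^\fz(\cm f_j)^r\r]^{p_0/r}w(x)\,dx\le C\int_\rn\lf[\sum_{j=1}^\fz|f_j|^r\r]^{p_0/r}w(x)\,dx,$$
with $C$ depending only on $n$, $r$, $p_0$ and the $A_{p_0}$ constant of $w$. Fixing $p_0\in(1,p_-)$, I would then apply the variable-exponent extrapolation theorem (as in \cite{cfmp06}) to the family of pairs $(F,G)$ with $F:=[\sum_j(\cm f_j)^r]^{1/r}$ and $G:=[\sum_j|f_j|^r]^{1/r}$: the hypothesis that $\int_\rn F^{p_0}w\ls\int_\rn G^{p_0}w$ for every $w\in A_{p_0}(\rn)$ is exactly the displayed inequality, and the side condition that $\cm$ be bounded on $L^{(p(\cdot)/p_0)'}(\rn)$ was verified above, so the extrapolation theorem yields $\|F\|_{\vlp}\le C\|G\|_{\vlp}$, which is precisely the assertion of Lemma~\ref{l-hlmo}. (As usual, one may first assume $\|F\|_{\vlp}<\fz$ by truncating the sums in $j$ and then letting the number of terms tend to $\fz$.)

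The heart of the matter is the extrapolation step, i.e.\ running the Rubio de Francia iteration inside $\vlp$. This rests on three facts, all available under \eqref{ve1}, \eqref{ve2} and $1<p_-\le p_+<\fz$: that $\vlp$ is a Banach function space whose associate space is, up to equivalent norms, $L^{p'(\cdot)}(\rn)$, together with the generalized H\"older inequality $\int_\rn|fg|\,dx\ls\|f\|_{\vlp}\|g\|_{L^{p'(\cdot)}(\rn)}$; the construction, from a nonnegative $h\in L^{(p(\cdot)/p_0)'}(\rn)$, of a weight $\mathcal{R}h\ge h$ lying in $A_1(\rn)$ with $\|\mathcal{R}h\|_{L^{(p(\cdot)/p_0)'}(\rn)}\ls\|h\|_{L^{(p(\cdot)/p_0)'}(\rn)}$, obtained from the series $\mathcal{R}h:=\sum_{k=0}^\fz(2\|\cm\|)^{-k}\cm^k h$ where $\|\cm\|$ is the operator norm of $\cm$ on $L^{(p(\cdot)/p_0)'}(\rn)$; and, finally, H\"older's inequality in the exponent $p(\cdot)/p_0$ combined with the weighted inequality of the second paragraph applied to $w=\mathcal{R}h\in A_1(\rn)\subset A_{p_0}(\rn)$. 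The only genuinely delicate point in the bookkeeping is the strict inequality $p_0<p_-$, which is what ensures $(p(\cdot)/p_0)'$ stays within the class on which $\cm$ is bounded; everything else is routine once the scalar maximal theorem and the weighted vector-valued inequality are in hand. This is in fact how \cite[Corollary 2.1]{cfmp06} is deduced from the extrapolation theorem proved there.
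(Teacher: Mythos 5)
Your argument is correct. The paper does not prove this lemma at all; it simply quotes it as \cite[Corollary 2.1]{cfmp06}, and your Rubio de Francia extrapolation argument (scalar boundedness of $\cm$ on $L^{(p(\cdot)/p_0)'}(\rn)$, the Andersen--John weighted vector-valued inequality, and the iteration $\mathcal{R}h=\sum_k(2\|\cm\|)^{-k}\cm^kh$ producing an $A_1$ majorant) is precisely the derivation used in that reference, so there is nothing to compare beyond noting that you have supplied the proof the paper omits.
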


\begin{proof}[Proof of Theorem \ref{t-equivalen}]
We first prove that, for all $f\in\cs_\fz'(\rn)$,
\begin{eqnarray}\label{equivalen-2}
\|g(f)\|_{\vlp}
&&\sim\|S(f)\|_{\vlp}\sim\|g_{a,\ast}(f)\|_{\vlp}\\
&&\sim\|\sigma(f)\|_{\vlp}
\sim\|\sigma_{a,\ast}(f)\|_{\vlp}.\noz
\end{eqnarray}

To prove \eqref{equivalen-2}, we first show that, for all $f\in\cs'_\fz(\rn)$,
\begin{equation}\label{equivalen-3}
\|g(f)\|_{\vlp}\sim\|g_{a,\ast}(f)\|_{\vlp}\
{\rm and}\ \|\sigma(f)\|_{\vlp}\sim\|\sigma_{a,\ast}(f)\|_{\vlp}.
\end{equation}
For similarity, we only give the proof for the first equivalence.
By definitions, we easily see that
$\|g(f)\|_{\vlp}\le\|g_{a,\ast}(f)\|_{\vlp}$.
Conversely, we show that $\|g_{a,\ast}(f)\|_{\vlp}\ls\|g(f)\|_{\vlp}$.
Since $a\in( n/{\min\{p_-,2\}},\fz)$,
it follows that there exists $r\in(0,\min\{p_-,2\})$ such that
$a\in(n/r,\fz)$. By Lemma \ref{l-peetre} and the Minkowski integral inequality, we find that
\begin{eqnarray*}
g_{a,\ast}(f)(x)
&&=\lf\{\sum_{j\in\zz}\int_1^2[(\phi_{2^{-j}t}^\ast f)_a(x)]^2\,
\frac{dt}t\r\}^{1/2}\\
&&\ls\lf\{\sum_{j\in\zz}\int_1^2
\lf[\sum_{k=0}^\fz2^{-kN_0r}2^{(k+j)n}
\int_\rn\frac{|(\phi_{k+j})_t\ast f(y)|^r}
{(1+2^j|x-y|)^{ar}}\,dy\r]^{\frac2r}\,\frac{dt}t\r\}^{1/2}\\
&&\ls\lf\{\sum_{j\in\zz}\lf[\sum_{k=0}^\fz2^{-kN_0r}2^{(k+j)n}
\int_\rn\frac{[\int_1^2|(\phi_{k+j})_t\ast f(y)|^2\,\frac{dt}t]^\frac r2}
{(1+2^j|x-y|)^{ar}}\,dy\r]^{\frac2r}\r\}^{1/2},
\end{eqnarray*}
which, together with the Minkowski series inequality and
Remark \ref{r-vlp}(iii), implies that
\begin{eqnarray}\label{equivalen-1}
&&\|g_{a,\ast}(f)\|_{\vlp}^r\\
&&\hs\ls\lf\|\sum_{k=0}^\fz2^{-k(N_0r-n)}\lf(\sum_{j\in\zz}2^{j\frac{2n}r}\lf[
\int_\rn\frac{[\int_1^2|(\phi_{k+j})_t\ast f(y)|^2\,\frac{dt}t]^\frac r2}
{(1+2^j|\cdot-y|)^{ar}}\,dy\r]^\frac2r\r)^\frac r2\r
\|_{L^{\frac{p(\cdot)}r}(\rn)}\noz\\
&&\hs\ls\sum_{k=0}^\fz2^{-k(N_0r-n)}\lf\|\lf\{\sum_{j\in\zz}2^{j\frac{2n}r}\lf[
\int_\rn\frac{[\int_1^2|(\phi_{k+j})_t\ast f(y)|^2\,\frac{dt}t]^\frac r2}
{(1+2^j|\cdot-y|)^{ar}}\,dy\r]^\frac2r\r\}^\frac12\r\|_{\vlp}^r\noz\\
&&\hs\ls\sum_{k=0}^\fz2^{-k(N_0r-n)}\lf\|\lf\{\sum_{j\in\zz}2^{j\frac {2n}r}\lf(
\sum_{i=0}^\fz2^{-iar}\r.\r.\r.\noz\\
&&\hs\hs\times\lf.\lf.\lf.\int_{|\cdot-y|\sim2^{i-j}}
\lf[\int_1^2|(\phi_{k+j})_t\ast f(y)|^2\,\frac{dt}t\r]^\frac r2
\,dy\r)^\frac2r\r\}^\frac12\r\|_{\vlp}^r,\noz
\end{eqnarray}
where $N_0\in\nn$ is sufficiently large and $|x-y|\sim 2^{i-j}$ means that
$|x-y|<2^{-j}$ if $i=0$, or
$2^{i-j-1}\le|x-y|<2^{i-j}$ if $i\in\nn$.
Applying the Minkowski inequality and Lemma \ref{l-hlmo},
we conclude that
\begin{eqnarray*}
&&\|g_{a,\ast}(f)\|_{\vlp}^r\\
&&\hs\ls\sum_{k=0}^\fz2^{-kN_0r+kn}\sum_{i=0}^\fz2^{-iar+in}
\lf\|\lf\{\sum_{j\in\zz}\lf[\cm\lf(\lf[\int_1^2
|(\phi_{k+j})_t\ast f|^2\,\frac{dt}t\r]
^\frac r2\r)\r]^{\frac 2r}\r\}^{\frac r2}\r\|
_{L^{\frac{p(\cdot)}{r}}(\rn)}\\
&&\hs\ls\sum_{k=0}^\fz2^{-kN_0r+kn}\sum_{i=0}^\fz2^{-iar+in}
\lf\|\lf\{\sum_{j\in\zz}\lf[\int_1^2
|(\phi_{k+j})_t\ast f|^2\,\frac{dt}t\r]^2\r\}^{\frac 12}\r\|
_{L^{p(\cdot)}(\rn)}^r\ls\|g(f)\|_{\vlp}^r,
\end{eqnarray*}
which completes the
proof of \eqref{equivalen-3}.

Next we prove that
\begin{equation}\label{equivalen-4}
\|S(f)\|_{\vlp}\sim\|g_{a,\ast}(f)\|_{\vlp}.
\end{equation}
It suffices to show that
$\|g_{a,\ast}(f)\|_{\vlp}\ls\|S(f)\|_{\vlp},$
since the inverse inequality holds true trivially.
From \cite[(3.9)]{lsuyy}, we deduce that
\begin{eqnarray*}
&&\|g_{a,\ast}(f)\|_{\vlp}\\
&&\hs=\lf\|\lf\{\sum_{j\in\zz}
\lf[\sum_{k=0}^\fz2^{-kN_0r+2(k+j)n}\r.\r.\r.\\
&&\hs\hs\times\lf.\lf.\lf.\sum_{i=0}^\fz\int_{|\cdot-y|\sim 2^{i-j}}
\lf(\int_1^2\int_{|z|<2^{-(k+j)}t}|(\phi_{k+j})_t\ast
f(y+z)|^2\,\frac{dzdt}{t}\r)
^{\frac r2}\,dy\r]^{\frac2r}\r\}\r\|_{\vlp},
\end{eqnarray*}
where $N_0\in\nn$ is sufficiently large and $|\cdot-y|\sim2^{i-j}$
is the same as
in \eqref{equivalen-1}. Then, by an argument similar to that used
in the proof of \eqref{equivalen-3}, we
conclude that $\|g_{a,\ast}(f)\|_{\vlp}\ls\|S(f)\|_{\vlp}$, which
completes the proof of \eqref{equivalen-4}.

By arguments similar to those used in the proofs of \eqref{equivalen-3},
\eqref{equivalen-4} and \cite[Theorem 2.8]{ut11}, we
conclude that
\begin{equation}\label{equivalen-5}
\|\sigma(f)\|_{\vlp}\ls\|g_{a,\ast}(f)\|_{\vlp}\ls\|\sigma_{a,\ast}(f)\|_{\vlp}.
\end{equation}

Now, from \eqref{equivalen-3}, \eqref{equivalen-4} and \eqref{equivalen-5},
we deduce that \eqref{equivalen-2} holds true, which, together with
Proposition \ref{p-lpd}, implies that
$f\in\vhs$ if and only if
$f\in\cs_\fz'(\rn)$ and $S(f)\in\vlp$; moreover,
$\|f\|_{\vhs}\sim\|S(f)\|_{\vlp}$.
This finishes the proof of Theorem \ref{t-equivalen}.
\end{proof}

\begin{proof}[Proof of Corollary \ref{c-equivalent}]
Assume $f\in\cs_\fz'(\rn)$ and $g_\lz^\ast(f)\in\vlp$.
It is easy to see that, for all $\lz\in(1,\fz)$ and $x\in\rn$,
$S(f)(x)\ls g_\lz^\ast(f)(x)$,
which, together with Theorem \ref{t-equivalen}, implies that
$f\in \vhs$ and $\|f\|_{\vhs}\sim\|S(f)\|_{\vlp}\ls\|g_\lz^\ast(f)\|_{\vlp}$.

Conversely, let $f\in\vhs$. Then $f\in\cs'(\rn)\subset\cs_\fz'(\rn)$.
By the fact that $\lz\in(1+2/{\min\{2,p_-\}},\fz)$, we see that there exists
$a\in( n/{\min\{2,p_-\}},\fz)$ such that $\lz\in(1+2a/n,\fz)$. Then, by this,
we further find that, for all $x\in\rn$,
\begin{eqnarray*}
g_\lz^\ast(f)(x)
&&=\lf\{\int_0^\fz\int_\rn\lf(\frac t{t+|x-y|}\r)^{\lz n}
|\phi_t\ast f(y)|^2\,dy\,\frac{dt}{t^{n+1}}\r\}^{1/2}\\
&&\ls\lf\{\int_0^\fz\lf[(\phi_t^\ast f)_a(x)\r]^2
\int_\rn\lf(1+\frac{|x-y|}{t}\r)^{2a-\lz n}\,dy\,\frac{dt}{t^{n+1}}\r\}^{1/2}\\
&&\sim\lf\{\int_0^\fz\lf[(\phi_t^\ast f)_a(x)\r]^2\,\frac{dt}{t}\r\}^{1/2}
\sim g_{a,\ast}(f)(x).
\end{eqnarray*}
From this and Theorem \ref{t-equivalen}, we deduce that
$$\|g_\lz^\ast(f)\|_\vlp\ls\|g_{a,\ast}(f)\|_\vlp
\sim\|f\|_{\vhs},$$
which completes the proof of Corollary \ref{c-equivalent}.
\end{proof}

To prove Theorem \ref{t-intrinsic}, we need more preparations.
The following technical lemma is essentially contained in \cite{ns12}.
\begin{lemma}\label{l-bigsball}
Let $p(\cdot)\in\cp(\rn)$ satisfy \eqref{ve1} and \eqref{ve2}.
Then there exists a positive
constant $C$ such that, for all cubes $Q_1\subset Q_2$,
\begin{equation}\label{bigsball1}
\|\chi_{Q_1}\|_{\vlp}\le C\lf(\frac{|Q_1|}{|Q_2|}\r)^{1/p_+}\|\chi_{Q_2}\|_{\vlp}
\end{equation}
and
\begin{equation*}
\|\chi_{Q_2}\|_{\vlp}\le C\lf(\frac{|Q_2|}{|Q_1|}\r)^{1/p_-}\|\chi_{Q_1}\|_{\vlp}.
\end{equation*}
\end{lemma}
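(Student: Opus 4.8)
The plan is to reduce both inequalities to the homogeneity and the membership properties of $\|\cdot\|_{\vlp}$ recorded in Remark \ref{r-vlp}, together with the elementary observation that for a characteristic function the quasi-norm can be computed more explicitly. First I would note that for any cube $Q$ and any $\lz\in(0,\fz)$,
$$\int_\rn\lf[\frac{\chi_Q(x)}{\lz}\r]^{p(x)}\,dx=\int_Q\lz^{-p(x)}\,dx,$$
so that $\|\chi_Q\|_{\vlp}$ is the unique $\lz$ for which $\int_Q\lz^{-p(x)}\,dx=1$ (when $\lz\ge1$ use $\lz^{-p(x)}\le\lz^{-p_-}$, when $\lz\le1$ use $\lz^{-p(x)}\le\lz^{-p_+}$ to see the integral is monotone and crosses $1$). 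This gives the two-sided bound
$$|Q|^{1/p_+}\ls\|\chi_Q\|_{\vlp}\ls|Q|^{1/p_-}\quad\text{if }|Q|\le1,$$
with the reversed roles of $p_-$ and $p_+$ when $|Q|\ge1$, but the key point for the lemma is not the absolute size; it is the \emph{comparison} of two nested cubes.

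For the comparison, let $Q_1\subset Q_2$ and set $\lz_2:=\|\chi_{Q_2}\|_{\vlp}$, so $\int_{Q_2}\lz_2^{-p(x)}\,dx=1$ by Remark \ref{r-vlp}(iv). I want to find $\lz_1$ with $\int_{Q_1}\lz_1^{-p(x)}\,dx=1$ and control $\lz_1/\lz_2$. Writing $\lz_1=\tau\lz_2$, the condition becomes $\int_{Q_1}\tau^{-p(x)}\lz_2^{-p(x)}\,dx=1$. Since $\int_{Q_1}\lz_2^{-p(x)}\,dx\le\int_{Q_2}\lz_2^{-p(x)}\,dx=1$, and since a direct estimate controls $\int_{Q_1}\lz_2^{-p(x)}\,dx$ from below by $(|Q_1|/|Q_2|)$ times something comparable to $1$ — here is where the locally log-Hölder continuity \eqref{ve1} and the decay condition \eqref{ve2} enter, exactly as in \cite{ns12}, to guarantee that $\lz_2^{-p(\cdot)}$ does not oscillate too wildly across $Q_2$ and that $\lz_2$ itself is comparable to $|Q_2|^{1/p(x_{Q_2})}$ up to harmless factors — one deduces that the required $\tau$ satisfies $\tau\ls(|Q_2|/|Q_1|)^{1/p_-}$ and $\tau\gs(|Q_2|/|Q_1|)^{1/p_+}$. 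Unwinding, $\|\chi_{Q_1}\|_{\vlp}=\tau\|\chi_{Q_2}\|_{\vlp}$ yields both \eqref{bigsball1} and its companion. Alternatively, and perhaps more cleanly, one can cite the fact from \cite{ns12} (or \cite[Lemma 2.2]{ns12}-type statements) that $\|\chi_Q\|_{\vlp}\sim|Q|\,\|\chi_Q\|_{\vlp}^{-1}\cdots$; but since the lemma is stated as ``essentially contained in \cite{ns12}'', the honest route is to extract it from the estimates there.

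I expect the main obstacle to be the uniform-in-$Q$ control of the oscillation of $\lz^{-p(\cdot)}$ needed to pass from ``$\int_{Q_1}\lz_2^{-p(x)}\,dx\le1$'' to a genuine lower bound of the form $(|Q_1|/|Q_2|)^{1/p_-}\ls\int_{Q_1}\lz_2^{-p(x)}\,dx$ when $\lz_2$ is large (the regime forcing the $p_+$ exponent in \eqref{bigsball1}) versus $\lz_2$ small. The conditions \eqref{ve1} and \eqref{ve2} are precisely what make this work: \eqref{ve1} handles small cubes (where $|x-y|\le1/2$ and the $\log$ bound kicks in) and \eqref{ve2} handles large cubes (controlling $p(x)$ by $p_\fz$ at spatial infinity). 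Once that uniform control is in hand, the rest is the bookkeeping of splitting into the cases $\lz_2\ge1$ and $\lz_2<1$ and reading off the exponent $1/p_+$ or $1/p_-$ according to whether we are shrinking or enlarging the cube; this bookkeeping is routine and I would not belabor it.
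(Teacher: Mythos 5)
There is a genuine gap at the only nontrivial point of your argument. After setting $\lz_2:=\|\chi_{Q_2}\|_{\vlp}$ and writing $\lz_1=\tau\lz_2$, proving \eqref{bigsball1} amounts exactly to showing a uniform lower bound of the type $\int_{Q_1}\lz_2^{-p(x)}\,dx\gs(|Q_1|/|Q_2|)\int_{Q_2}\lz_2^{-p(x)}\,dx$ (equivalently, that the average of $\lz_2^{-p(\cdot)}$ over $Q_1$ is comparable, with constants uniform in the cubes, to a power-type ratio), and your text never establishes this: the conclusion ``one deduces that the required $\tau$ satisfies $\tau\ls(|Q_2|/|Q_1|)^{1/p_-}$ and $\tau\gs(|Q_2|/|Q_1|)^{1/p_+}$'' is simply asserted, and you yourself then identify this very estimate as ``the main obstacle,'' deferring it to ``exactly as in \cite{ns12}.'' Since this is where \eqref{ve1} and \eqref{ve2} must actually be used (the paper's own Remark after Corollary 1.3 shows that such uniform comparisons can fail without structural hypotheses on $p(\cdot)$), the proposal reduces the lemma to an unproved statement that carries all of its content. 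The soft modular facts you do prove (monotonicity in $\lz$, Remark \ref{r-vlp}(iv), and crude bounds with the global exponents $p_\pm$) cannot yield \eqref{bigsball1}, because they lose the ratio structure; incidentally, your displayed two-sided bound is reversed: for $|Q|\le1$ one has $|Q|^{1/p_-}\ls\|\chi_Q\|_{\vlp}\ls|Q|^{1/p_+}$, not the other way around (harmless here, since you do not use it, but it signals the direction-of-inequality bookkeeping is delicate).

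The paper's proof is the route you mention only in passing: it quotes \cite[Lemma 2.2]{ns12}, which gives $\|\chi_Q\|_{\vlp}\sim|Q|^{1/p(z)}$ for any $z\in Q$ when $\ell(Q)\le1$ and $\|\chi_Q\|_{\vlp}\sim|Q|^{1/p_\fz}$ when $\ell(Q)\ge1$, and then verifies \eqref{bigsball1} in three cases: $\ell(Q_2)\le1$, $\ell(Q_1)\ge1$, and the mixed case $\ell(Q_1)<1<\ell(Q_2)$, where one compares $|Q_1|^{1/p(z_0)}$ with $|Q_2|^{1/p_\fz}$ and uses $|Q_1|<1\le|Q_2|$ together with $p(z_0),p_\fz\le p_+$. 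If you take that route, you must state the cited lemma and carry out this case analysis; note in particular that the mixed case does not reduce to your proposed dichotomy ``$\lz_2\ge1$ versus $\lz_2<1$,'' since it is the side lengths of the two cubes relative to $1$, not the size of the norm, that governs which asymptotic formula applies. As written, the proposal is an outline of two possible strategies rather than a proof of either.
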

\begin{proof}
For similarity, we only show \eqref{bigsball1}.
Let $z_0\in Q_1$.
If $\ell(Q_2)\le1$, then, by \cite[Lemma 2.2(1)]{ns12} and its proof, we see that
\begin{eqnarray*}
\frac{\|\chi_{Q_1}\|_{\vlp}}{\|\chi_{Q_2}\|_{\vlp}}
\sim\lf(\frac{|Q_1|}{|Q_2|}\r)^{\frac1{p(z_0)}}
\ls\lf(\frac{|Q_1|}{|Q_2|}\r)^{\frac1{p_+}}.
\end{eqnarray*}
If $\ell(Q_1)\ge1$, then by \cite[Lemma 2.2(2)]{ns12}, we find that
\begin{eqnarray*}
\frac{\|\chi_{Q_1}\|_{\vlp}}{\|\chi_{Q_2}\|_{\vlp}}
\sim\lf(\frac{|Q_1|}{|Q_2|}\r)^{\frac1{p_\fz}}
\ls\lf(\frac{|Q_1|}{|Q_2|}\r)^{\frac1{p_+}},
\end{eqnarray*}
where $p_\fz$ is as in \eqref{ve2}.
If $\ell(Q_1)<1<\ell(Q_2)$, then by \cite[Lemma 2.2]{ns12}, we know that
\begin{eqnarray*}
\frac{\|\chi_{Q_1}\|_{\vlp}}{\|\chi_{Q_2}\|_{\vlp}}
\sim\frac{|Q_1|^{1/p(z_0)}}{|Q_2|^{1/p_\fz}}
\ls\lf(\frac{|Q_1|}{|Q_2|}\r)^{\frac1{p_+}},
\end{eqnarray*}
which completes the proof of \eqref{bigsball1} and hence Lemma \ref{l-bigsball}.
\end{proof}
The following Lemma \ref{l-cm1} comes from \cite[p.38]{mw80}.
\begin{lemma}\label{l-cm1}
Let $g\in L_{\rm loc}^1(\rn)$, $s\in\zz_+$ and $Q$ be a cube in $\rn$. Then
there exists a positive constant $C$, independent of $g$ and $Q$, such that
$$\sup_{x\in Q}|P_Q^sg(x)|\le \frac C{|Q|}\int_Q|g(x)|\,dx.$$
\end{lemma}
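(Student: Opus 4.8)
The plan is to normalize the cube by an affine change of variables and then use that, on a single fixed cube, $P_{Q}^{s}g$ is the orthogonal projection of $g$ onto the finite-dimensional space $\cp_{s}(\rn)$ inside $L^{2}$, on which the supremum norm is controlled by the $L^{1}$ norm.

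First I would fix the reference cube $Q_{0}:=[-1/2,1/2]^{n}$ and, for an arbitrary cube $Q$ with center $x_{Q}$ and side length $\ell(Q)$, introduce the affine bijection $\phi_{Q}:\ Q_{0}\to Q$, $\phi_{Q}(y):=x_{Q}+\ell(Q)y$, together with $\wz g:=g\circ\phi_{Q}\in L^{1}(Q_{0})$. Since both $h\mapsto h\circ\phi_{Q}$ and $h\mapsto h\circ\phi_{Q}^{-1}$ preserve $\cp_{s}(\rn)$, a change of variables $x=\phi_{Q}(y)$ in the orthogonality relation defining $P_{Q}^{s}g$ shows that $(P_{Q}^{s}g)\circ\phi_{Q}$ is a polynomial of degree at most $s$ satisfying $\int_{Q_{0}}[\wz g(y)-(P_{Q}^{s}g)(\phi_{Q}(y))]h(y)\,dy=0$ for all $h\in\cp_{s}(\rn)$; by the uniqueness built into the definition of $P_{Q_{0}}^{s}$ this forces $(P_{Q}^{s}g)\circ\phi_{Q}=P_{Q_{0}}^{s}\wz g$. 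Consequently $\sup_{x\in Q}|P_{Q}^{s}g(x)|=\sup_{y\in Q_{0}}|P_{Q_{0}}^{s}\wz g(y)|$, while the same change of variables gives $\int_{Q_{0}}|\wz g(y)|\,dy=|Q|^{-1}\int_{Q}|g(x)|\,dx$.

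It then remains to treat the single cube $Q_{0}$, where $P_{Q_{0}}^{s}$ is the orthogonal projection of $L^{2}(Q_{0})$ onto the finite-dimensional subspace $\cp_{s}(\rn)$. Fixing an $L^{2}(Q_{0})$-orthonormal basis $\{e_{\gamma}\}_{\gamma}$ of $\cp_{s}(\rn)$, one has $P_{Q_{0}}^{s}\wz g=\sum_{\gamma}\lf(\int_{Q_{0}}\wz g\,e_{\gamma}\r)e_{\gamma}$, so that, since each polynomial $e_{\gamma}$ is bounded on $Q_{0}$,
$$\sup_{y\in Q_{0}}\lf|P_{Q_{0}}^{s}\wz g(y)\r|\le\lf(\sum_{\gamma}\|e_{\gamma}\|_{L^{\fz}(Q_{0})}^{2}\r)\int_{Q_{0}}|\wz g(y)|\,dy=:C\int_{Q_{0}}|\wz g(y)|\,dy,$$
with $C$ depending only on $n$ and $s$. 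Combining this with the two identities at the end of the previous paragraph yields the asserted estimate with a constant independent of $g$ and $Q$. There is essentially no obstacle here; the only point requiring a little care is the affine covariance $(P_{Q}^{s}g)\circ\phi_{Q}=P_{Q_{0}}^{s}\wz g$, where the uniqueness clause in the definition of the projection is precisely what is used, and everything else is a routine finite-dimensional computation.
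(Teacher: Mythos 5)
Your proof is correct. The paper does not actually prove this lemma (it is quoted from Taibleson--Weiss, p.~38), and your argument---affine rescaling to the reference cube $Q_0$ followed by the orthonormal-basis expansion of the projection onto the finite-dimensional space $\cp_s(\rn)$---is exactly the standard one; the only point worth making explicit is that the formula $P_{Q_0}^s\wz g=\sum_\gamma(\int_{Q_0}\wz g\,e_\gamma)e_\gamma$ is being applied to $\wz g\in L^1(Q_0)$ rather than $L^2(Q_0)$, but since each $e_\gamma$ is bounded this still defines a polynomial in $\cp_s(\rn)$ satisfying the defining orthogonality relations, so the uniqueness clause identifies it with $P_{Q_0}^s\wz g$ and the rest goes through verbatim.
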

\begin{lemma}\label{l-sembed}
Let $\az\in(0,1]$, $s\in\zz_+$ and
$\ez\in(\az+s,\fz)$. Assume that $p(\cdot)\in\cp(\rn)$
satisfies \eqref{ve1}, \eqref{ve2}
and $p_-\in(n/(n+\az+s),1]$.
If $f\in\ccc_{(\az,\ez),s}(\rn)$ or $\cs(\rn)$,
then $f\in\cl_{1,p(\cdot),s}(\rn)$.

\end{lemma}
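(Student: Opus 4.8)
The plan is to bound the Campanato seminorm $\|f\|_{\cps}$ directly from the definition, treating small and large cubes separately, and exploiting that an element $f$ of $\ccc_{(\az,\ez),s}(\rn)$ (or of $\cs(\rn)$) has both pointwise decay $|f(x)|\ls(1+|x|)^{-n-\ez}$ and vanishing moments up to order $s$. Fix a cube $Q=Q(x_0,r)$ with center $x_0$ and side length $r$. By Definition \ref{d-cps} we must estimate $\frac{|Q|}{\|\chi_Q\|_{\vlp}}\frac1{|Q|}\int_Q|f(x)-P_Q^sf(x)|\,dx$, and by Lemma \ref{l-cm1} the polynomial term is controlled by the average of $|f|$ over $Q$, so it suffices to bound $\frac{1}{\|\chi_Q\|_{\vlp}}\int_Q|f(x)|\,dx$ by a constant independent of $Q$. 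The case $f\in\cs(\rn)$ follows from the case $f\in\ccc_{(\az,\ez),s}(\rn)$ up to a dilation constant, since any Schwartz function, after normalization, lies in some $\ccc_{(\az,\ez),s}(\rn)$-type class for the decay estimate and has the needed smoothness; alternatively one argues for $\cs(\rn)$ separately using rapid decay, which is only easier. So I focus on $f\in\ccc_{(\az,\ez),s}(\rn)$.

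First I would handle cubes $Q$ with, say, $r\ge1$ (or $Q$ far from the origin). Using $|f(x)|\ls(1+|x|)^{-n-\ez}$, one splits $\rn$ into dyadic annuli and gets $\int_Q|f(x)|\,dx\ls\int_{\rn}(1+|x|)^{-n-\ez}\,dx\ls1$, a finite constant since $\ez>0$. On the other hand, by Lemma \ref{l-bigsball} applied to $Q_1=Q(x_0,1)$ (or any fixed unit cube inside $Q$ when $r\ge1$) and $Q_2=Q$, one has $\|\chi_Q\|_{\vlp}\gs\|\chi_{Q_1}\|_{\vlp}$ up to a constant depending only on comparing with a fixed unit cube, and $\|\chi_{Q_1}\|_{\vlp}$ is bounded below by a positive constant (this uses $p_+<\fz$, or the explicit lower bounds in \cite[Lemma 2.2]{ns12}). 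Hence $\frac{1}{\|\chi_Q\|_{\vlp}}\int_Q|f|\ls1$ in this regime. The subtlety here is getting a uniform lower bound on $\|\chi_{Q}\|_{\vlp}$ that does not degenerate as $Q\to\rn$; this is where \eqref{ve2} (the decay condition on $p(\cdot)$) enters, guaranteeing that $\|\chi_Q\|_{\vlp}\sim|Q|^{1/p_\fz}$ for large $Q$, which grows, so the quotient is well-behaved.

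Next, for small cubes $Q$ with $r<1$, the decay bound alone is not enough — one must use the vanishing moments of $f$ together with its $C^s$-smoothness and the Hölder-type condition \eqref{intrinsic-2}. Pick the Taylor polynomial $T$ of $f$ at $x_0$ of order $s$; the moment condition is not directly what one uses here, rather one writes $f(x)-T(x)$ via the integral form of the Taylor remainder and applies \eqref{intrinsic-2} to the top-order derivatives to get $|f(x)-T(x)|\ls r^{\,s+\az}(1+|x_0|)^{-n-\ez}$ for $x\in Q$. Since $P_Q^sf$ is the $L^2(Q)$-best approximation, $\int_Q|f-P_Q^sf|\ls\int_Q|f-T|\ls r^{\,n+s+\az}(1+|x_0|)^{-n-\ez}$. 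Meanwhile, by Lemma \ref{l-bigsball} (comparing $Q$ with a unit cube containing it) together with \cite[Lemma 2.2]{ns12}, $\|\chi_Q\|_{\vlp}\gs r^{n/p_-}(1+|x_0|)^{-(n+\ez)\cdot 0}$ — more precisely $\|\chi_Q\|_{\vlp}\sim|Q|^{1/p(z_0)}$ for a point $z_0\in Q$ and $1/p(z_0)\le1/p_-$. Then the quantity to bound is $\ls\frac{r^{n+s+\az}(1+|x_0|)^{-n-\ez}}{r^{n/p_-}}=r^{\,n+s+\az-n/p_-}(1+|x_0|)^{-n-\ez}$, and the exponent $n+s+\az-n/p_-$ is nonnegative precisely because of the hypothesis $p_-\in(n/(n+\az+s),1]$, i.e. $n/p_-\le n+\az+s$. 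Hence this term stays bounded (in fact tends to $0$) as $r\to0$, uniformly in $x_0$ because of the extra decay factor $(1+|x_0|)^{-n-\ez}$. The main obstacle, and the step requiring the most care, is precisely the bookkeeping of the $\|\chi_Q\|_{\vlp}$ lower bounds across the three regimes $\ell(Q)<1$, $\ell(Q)\ge1$, and the transitional case — one leans on \cite[Lemma 2.2]{ns12} and Lemma \ref{l-bigsball} here — together with verifying that the exponent arithmetic closes exactly under $p_-\in(n/(n+\az+s),1]$ and $\ez>\az+s$; once those are in hand, taking the supremum over all cubes $Q$ yields $\|f\|_{\cps}<\fz$.
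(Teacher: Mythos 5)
Your proof is correct and follows essentially the same route as the paper's: Taylor expansion together with the H\"older condition \eqref{intrinsic-2} on the top-order derivatives for small cubes, the pointwise decay of $f$ for large cubes, and the norm comparisons of Lemma \ref{l-bigsball} and \cite[Lemma 2.2]{ns12} (the paper merely organizes the cases by $|x_0|+r\lessgtr1$ and $|x_0|\lessgtr2r$ rather than by $r\lessgtr1$, which is a cosmetic difference). One caveat: your opening reduction to bounding $\|\chi_Q\|_{\vlp}^{-1}\int_Q|f|$ is not usable for small cubes (that quantity blows up as $|Q|\to0$ wherever $p(\cdot)<1$), but you correctly abandon it there in favor of the cancellation argument, so nothing is lost.
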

\begin{proof}
For similarity, we only give the proof for $\ccc_{(\az,\ez),s}(\rn)$.
For any $f\in\ccc_{(\az,\ez),s}(\rn)$,  $x\in\rn$ and cube $Q:=Q(x_0,r)\subset\rn$
with $(x_0,r)\in\rr_+^{n+1}$, let
$$p_Q(x):=\sum_{|\beta|\le s}
\frac{D^\beta f(x_0)}{\beta !}(x-x_0)^\beta\in\cp_s(\rn).$$
Then, from Lemma \ref{l-cm1} and Taylor's remainder theorem,
we deduce that, for any $x\in Q$, there exists $\xi(x)\in Q$ such that
\begin{eqnarray}\label{sembed1}
\int_Q|f(x)-P_Q^sf(x)|\,dx
&&\le\int_Q|f(x)-p_Q(x)|\,dx+\int_Q|P_Q^s(p_Q-f)(x)|\,dx\\
&&\ls\int_Q|f(x)-p_Q(x)|\,dx\noz\\
&&{\sim}\int_Q\lf|\sum_{|\beta|=s}\frac{D^\beta f(\xi(x))-D^\beta f(x_0)}{\beta !}
(x-x_0)^\beta\r|\,dx.\noz
\end{eqnarray}

Now, if $|x_0|+r\le1$, namely, $Q\subset Q(0,\sqrt{n})$,
then, by Lemma \ref{l-bigsball},
\eqref{sembed1}, \eqref{intrinsic-2} and the fact that $p_-\in(n/(n+\az+s),1]$,
we see that
\begin{eqnarray}\label{sembed2}
&&\frac1{\|\chi_Q\|_{\vlp}}\int_Q|f(x)-P_Q^sf(x)|\,dx\\
&&\hs\ls\lf\{\sup_{x,\,y\in\rn,x\neq y}\sum_{|\beta|=s}
\frac{|D^\beta f(x)-D^\beta f(y)|}{|x-y|^\alpha}\r\}
\frac{1}{\|\chi_Q\|_{\vlp}}\int_Q|\xi(x)-x_0|^\az|x-x_0|^s\,dx\noz\\
&&\hs\ls|Q|^{1+(\az+s)/n-1/p_-}\frac{|Q(0,\sqrt{n})|^{1/p_-}}
{\|\chi_{Q(0,\sqrt n)}\|_{\vlp}}\ls1.\noz
\end{eqnarray}

If $|x_0|+r>1$ and $|x_0|\le 2r$,
then $r>1/3$ and $|Q|\sim|Q(0,\sqrt n(|x_0|+r))|$.
From Lemma \ref{l-cm1} and $|f(x)|\le(1+|x|)^{-n-\ez}$ for all $x\in\rn$, we deduce that
\begin{eqnarray}\label{sembed3}
\quad\quad&&\frac1{\|\chi_Q\|_{\vlp}}\int_Q|f(x)-P_Q^sf(x)|\,dx\\
&&\hs\ls\frac1{\|\chi_Q\|_{\vlp}}\int_Q|f(x)|\,dx
\!\ls\!\sup_{y\in\rn}\lf[(1+|y|)^{n+\vez}|f(y)|\r]\frac1{\|\chi_Q\|_{\vlp}}
\int_Q\frac{1}{(1+|x|)^{n+\vez}}\,dx\noz\\
&&\hs\ls\lf[\frac{|Q(0,\sqrt n(|x_0|+r))|}{|Q|}\r]^{1/p-}
\frac1{\|\chi_{Q(0,\sqrt n(|x_0|+r))}\|_{\vlp}}\ls1.\noz
\end{eqnarray}

If $|x_0|+r>1$ and $|x_0|>2r$, then, for all $x\in Q$, it holds that
$1\ls|x|\sim|x_0|$. By this, \eqref{sembed1}, Lemma \ref{l-bigsball}
and \eqref{intrinsic-2},
we find that
\begin{eqnarray}\label{sembed4}
&&\frac1{\|\chi_Q\|_{\vlp}}\int_Q|f(x)-P_Q^sf(x)|\,dx\\
&&\hs\ls\frac{1}{\|\chi_Q\|_{\vlp}}
\int_Q|\xi(x)-x_0|^\az(1+|x_0|)^{-n-\ez}
|x-x_0|^s\,dx\noz\\
&&\hs\ls\frac{|Q|^{1+\frac{\az+s}n}}{\|\chi_Q\|_{\vlp}}(|x_0|+r)^{-n-\ez}\noz\\
&&\hs\ls\frac{|Q|^{1+(\az+s)/n}}{(|x_0|+r)^{n+\ez}}
\lf(\frac{|Q(0,\sqrt n(|x_0|+r))|}{|Q|}\r)^{\frac1{p_-}}
\frac1{\|\chi_{Q(0,\sqrt n(|x_0|+r))}\|_{\vlp}}\ls1.\noz
\end{eqnarray}

Combining \eqref{sembed2}, \eqref{sembed3}
and \eqref{sembed4}, we see that $f\in\cl_{1,p(\cdot),s}(\rn)$, which
completes the proof of Lemma \ref{l-sembed}.
\end{proof}
\begin{remark}
{ We point out that, from the proof of Lemma \ref{l-sembed},
we know that $\ccc_{(\alpha,\vez),s}(\rn)$ and $\cs(\rn)$
are continuously embedding into $\cl_{1,p(\cdot),s}(\rn)$,
which, in the case of $s=0$ and $p(x):=1$ for all $x\in\rn$,
was proved in \cite[Proposition 2.1]{ny11}.
Indeed, by the proof of Lemma \ref{l-sembed}, we see that,
for all $f\in\ccc_{(\alpha,\vez),s}(\rn)$ or $\cs(\rn)$,
\begin{eqnarray*}
\|f\|_{\cl_{1,p(\cdot),s}(\rn)}
&&\ls \sup_{x\in\rn}(1+|x|)^{n+\vez}|f(x)|\\
&&\quad+\sup_{x,\,y\in\rn,x\neq y}\sum_{|\beta|=s}
\lf\{\lf[\frac1{(1+|x|)^{n+\vez}}+\frac1{(1+|y|)^{n+\vez}}\r]^{-1}
\frac{|D^\beta f(x)-D^\beta f(y)|}{|x-y|^\alpha}\r\};
\end{eqnarray*}
moreover, if $f\in\ccc_{(\alpha,\vez),s}(\rn)$, then
$\|f\|_{\cl_{1,p(\cdot),s}(\rn)}\ls 1$; if $f\in\cs(\rn)$, then
$$\|f\|_{\cl_{1,p(\cdot),s}(\rn)}
\ls \sup_{x\in\rn}\sum_{\beta\in\zz_+^n,\,|\beta|\le s+1}
(1+|x|)^{n+\vez}|D^\beta f(x)|.$$
In this sense, $\ccc_{(\alpha,\vez),s}(\rn)$ and $\cs(\rn)$
are continuously embedding into $\cl_{1,p(\cdot),s}(\rn)$.}
\end{remark}

Now we recall the atomic Hardy space with variable exponent introduced
by Nakai and Sawano \cite{ns12}.
Let $p(\cdot)\in \cp(\rn)$, $s\in( n/{p_-}-n-1,\fz)\cap\zz_+$ and
$q\in[1,\fz]$ satisfy that $q\in[p_+,\fz)$.
Recall that a measurable function $a$ on $\rn$ is
called a $(p(\cdot),q,s)$-\emph{atom} if
it satisfies the following three conditions:

\begin{itemize}
\item[(i)] supp\,$a\subset Q$ for some $Q\subset\rn$;
\item[(ii)] $\|a\|_{\vlp}\le\frac{|Q|^{1/q}}{\|\chi_Q\|_{\vlp}}$;
\item[(iii)] $\int_\rn a(x)x^\beta\,dx=0$ for any $\beta\in\zz_+^n$ and $|\beta|\le s$.
\end{itemize}

The \emph{atomic Hardy space with variable} $p(\cdot)$,
denoted by $H_{\rm atom}^{p(\cdot),q}(\rn)$, is
defined to be the set of all $f\in\cs'(\rn)$ that can be represented as a sum of
multiples of $(p(\cdot),q,s)$-atoms, namely, $f=\sum_{j}\lz_j a_j$ in $\cs'(\rn)$,
where, for each $j$, $\lz_j$ is a nonnegative number and $a_j$ is a
$(p(\cdot),q,s)$-atom supported in some cube $Q_j$ with the property
$$\int_\rn\lf\{\sum_j\lf(\frac{\lz_j\chi_{Q_j}(x)}{\|\chi_{Q_j}\|_{\vlp}}\r)^
{p^\ast}\r\}^{\frac{p(x)}{p^\ast}}\,dx<\fz$$
with $p^\ast:=\min\{p_-,1\}$.
The \emph{norm} of $f\in H_{\rm atom}^{p(\cdot),q}(\rn)$ is defined by
$$\|f\|_{H_{\rm atom}^{p(\cdot),q}(\rn)}
:=\inf\lf\{\ca(\{\lz_j\}_{j},\{Q_j\}_{j}):\
f=\sum_{j}\lz_ja_j\ {\rm in}\ \cs'(\rn)\r\},$$
where the infimum is taken over all decompositions of $f$ as above
and
$$\ca(\{\lz_j\}_{j},\{Q_j\}_{j})
:=\inf\lf\{\lz\in(0,\fz):\ \int_\rn\lf(\sum_j\lf[\frac{\lz_j
\chi_{Q_j}(x)}{\lz\|\chi_{Q_j}\|_{\vlp}}\r]^{p^\ast}
\r)^{\frac{p(x)}{p^\ast}}\,dx\le1\r\}.$$

The following conclusion is just \cite[Lemma 4.11]{ns12}.

\begin{lemma}\label{l-ineq}
Let $p(\cdot)\in\cp(\rn)$ satisfy \eqref{ve1} and \eqref{ve2}.
Then there exist $\beta_0\in(0,1)$ and a positive constant $C$ such that, if
$q\in(0,\fz)$ satisfies $1/q\in(0,-{\log_2\beta_0}/{(n+1)})$,
then, for all sequences $\{\lz_j\}_j$ of nonnegative numbers,
measurable functions $\{b_j\}_j$ and cubes $\{Q_j\}_j$
satisfying supp $b_j\subset Q_j$ and $\|b_j\|_{L^{q}(Q_j)}\neq0$ for each $j$,
$$\lf\|\lf\{\sum_j\lf(\frac{\lz_j|b_j||Q_j|^{1/q}}
{\|b_j\|_{L^{q}(Q_j)}\|\chi_{Q_j}\|_{\vlp}}\r)^{p^\ast}\r\}^\frac1{p^\ast}
\r\|_{\vlp}\le C\ca(\{\lz_j\}_j,\{Q_j\}_j).$$
\end{lemma}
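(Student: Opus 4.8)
The plan is to deduce the asserted vector-valued inequality, via duality in the Banach function space $L^{p(\cdot)/p^\ast}(\rn)$, from the boundedness of the Hardy--Littlewood maximal operator $\cm$ on a suitable variable Lebesgue space, the admissible range of $q$ being precisely what forces the lower index of that space to exceed $1$; this is the role of the constant $\beta_0$.

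First I would normalize. Since both sides scale linearly in $\{\lz_j\}_j$ and the quasi-norm behaves well under the power $p^\ast$, it suffices to show
\begin{equation*}
\lf\|\sum_j\lf[\frac{\lz_j|b_j||Q_j|^{1/q}}
{\|b_j\|_{L^q(Q_j)}\|\chi_{Q_j}\|_{\vlp}}\r]^{p^\ast}\r\|_{L^{p(\cdot)/p^\ast}(\rn)}
\ls\lf\|\sum_j\lf[\frac{\lz_j\chi_{Q_j}}{\|\chi_{Q_j}\|_{\vlp}}\r]^{p^\ast}
\r\|_{L^{p(\cdot)/p^\ast}(\rn)}=\lf[\ca(\{\lz_j\}_j,\{Q_j\}_j)\r]^{p^\ast}.
\end{equation*}
Because $p^\ast=\min\{p_-,1\}\le p_-$, the exponent $p(\cdot)/p^\ast$ has lower index at least $1$, so $L^{p(\cdot)/p^\ast}(\rn)$ is a Banach function space and its norm is comparable to the supremum of $\int_\rn(\cdot)\,g$ over nonnegative $g$ with $\|g\|_{(L^{p(\cdot)/p^\ast}(\rn))'}\le1$. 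Fixing such a $g$ that nearly realizes the left-hand norm, pairing the sum against $g$, expanding over $j$, and applying H\"older's inequality on each $Q_j$ with exponents $q/p^\ast$ and $(q/p^\ast)'$, one obtains
\begin{equation*}
\int_{Q_j}|b_j(x)|^{p^\ast}g(x)\,dx\ls\|b_j\|_{L^q(Q_j)}^{p^\ast}|Q_j|^{1-p^\ast/q}
\inf_{y\in Q_j}\lf[\cm\lf(g^{(q/p^\ast)'}\r)(y)\r]^{1/(q/p^\ast)'}.
\end{equation*}
Substituting this back, all powers of $|Q_j|$ and all the factors $\|b_j\|_{L^q(Q_j)}$ cancel, leaving $\int_\rn\sum_j[\lz_j\chi_{Q_j}/\|\chi_{Q_j}\|_{\vlp}]^{p^\ast}\,[\cm(g^{(q/p^\ast)'})]^{1/(q/p^\ast)'}\,dx$. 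A second use of H\"older's inequality, now in $L^{p(\cdot)/p^\ast}(\rn)$ against its associate space, bounds this by $[\ca(\{\lz_j\}_j,\{Q_j\}_j)]^{p^\ast}\,\|[\cm(g^{(q/p^\ast)'})]^{1/(q/p^\ast)'}\|_{(L^{p(\cdot)/p^\ast}(\rn))'}$.

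It then remains to absorb the last factor, i.e.\ to check that $\|[\cm(g^{(q/p^\ast)'})]^{1/(q/p^\ast)'}\|_{(L^{p(\cdot)/p^\ast}(\rn))'}\ls\|g\|_{(L^{p(\cdot)/p^\ast}(\rn))'}\le1$. Since the associate space is, up to equivalent norms, $L^{(p(\cdot)/p^\ast)'}(\rn)$, this reduces to the boundedness of $\cm$ on $L^{(p(\cdot)/p^\ast)'/(q/p^\ast)'}(\rn)$. That exponent inherits \eqref{ve1} and \eqref{ve2} from $p(\cdot)$, and a short computation shows its lower index exceeds $1$ once $q$ is large enough (crudely, $q>p_+$ already works, but asking the sharper quantitative bound $1/q<-\log_2\beta_0/(n+1)$ with $\beta_0\in(0,1)$ fixes a margin that keeps the resulting constant independent of $q$). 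Under this restriction the scalar maximal inequality on variable Lebesgue spaces---the scalar counterpart of Lemma \ref{l-hlmo}---applies; assembling the above estimates and taking $p^\ast$-th roots yields the lemma.

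The main obstacle is this last step: identifying the precise admissible range of $q$, i.e.\ pinning down $\beta_0$ (and the harmless factor $n+1$), and, in the regime $p_-\le1$ relevant here, dealing with the associate space $L^{(p(\cdot)/p^\ast)'}(\rn)$, whose upper index is $+\infty$, so that one must use the forms of H\"older's inequality and of the maximal inequality that do not require an upper bound on the exponent. An alternative route avoiding the associate space is to first establish the single-function estimate $\|b\chi_Q\|_{\vlp}\ls\|b\|_{L^q(Q)}|Q|^{-1/q}\|\chi_Q\|_{\vlp}$ for $b$ supported in a cube $Q$ (writing $\|b\chi_Q\|_{\vlp}=\||b|^q\chi_Q\|_{L^{p(\cdot)/q}(\rn)}^{1/q}$, using generalized H\"older against $L^1$, and the comparison $\|\chi_Q\|_{L^{p(\cdot)/(q-p(\cdot))}(\rn)}\sim\|\chi_Q\|_{\vlp}^q|Q|^{-1}$, which follows from Lemma \ref{l-bigsball}), and then to pass to the vector-valued statement by invoking the Fefferman--Stein inequality of Lemma \ref{l-hlmo} to absorb the overlaps of the cubes $Q_j$.
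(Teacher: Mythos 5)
The paper itself contains no proof of this lemma: it is quoted verbatim from \cite{ns12} (Lemma 4.11 there), so there is no internal argument to compare yours against, only the original one of Nakai and Sawano. In that source the peculiar shape of the hypothesis $1/q\in(0,-\log_2\beta_0/(n+1))$ reflects the mechanism of the proof: $\beta_0$ comes from a norm-decay estimate of the type $\|\chi_S\|_{\vlp}\le\beta_0\|\chi_Q\|_{\vlp}$ for measurable $S\subset Q$ with $|S|\le 2^{-(n+1)}|Q|$, and the lemma is obtained by a layer-cake decomposition of each $b_j$ on $Q_j$ (Chebyshev gives $|\{x\in Q_j:\ |b_j(x)|>2^k|Q_j|^{-1/q}\|b_j\|_{L^q(Q_j)}\}|\le2^{-kq}|Q_j|$), the condition on $q$ being exactly what makes the resulting geometric series converge. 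Your route is genuinely different: dualize $L^{p(\cdot)/p^\ast}(\rn)$, apply H\"older on each $Q_j$ with the pair $(q/p^\ast,(q/p^\ast)')$, and absorb $[\cm(g^{(q/p^\ast)'})]^{1/(q/p^\ast)'}$ by the boundedness of $\cm$ on $L^{(p(\cdot)/p^\ast)'/(q/p^\ast)'}(\rn)$. The computation you outline is correct (the powers of $|Q_j|$ do cancel, and the final H\"older against the associate space produces $[\ca(\{\lz_j\}_j,\{Q_j\}_j)]^{p^\ast}$), and since the lemma only asserts the \emph{existence} of some $\beta_0$, re-encoding ``$q$ exceeds $p_+$ by a fixed margin'' as a choice of $\beta_0$ is admissible; the margin is indeed needed, since as $q\downarrow p_+$ the lower index of your auxiliary exponent tends to $1$ and the maximal constant blows up. What your approach buys is brevity and standard maximal-function technology; what it costs is that, in the relevant regime $p_-\le1$, the conjugate exponent $(p(\cdot)/p^\ast)'$ is unbounded (infinite where $p(x)=p^\ast$), so you must use the norm conjugate formula and the maximal theorem for exponents with $s_+=\fz$ and log-H\"older $1/s(\cdot)$ (Cruz-Uribe--Fiorenza, Diening et al.); this is fillable from the literature but lies outside the tools this paper actually invokes (Lemma \ref{l-hlmo} assumes $p_+<\fz$), so those citations and the verification that $1/s(\cdot)$ inherits \eqref{ve1} and \eqref{ve2} uniformly in admissible $q$ must be written out.

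The one piece I would not accept as written is the closing ``alternative route'': the single-cube estimate $\|b\chi_Q\|_{\vlp}\ls\|b\|_{L^q(Q)}|Q|^{-1/q}\|\chi_Q\|_{\vlp}$ is fine, but ``invoking the Fefferman--Stein inequality of Lemma \ref{l-hlmo} to absorb the overlaps of the cubes $Q_j$'' is not an argument: $|b_j|$ is not pointwise dominated by $\cm$ of anything carrying the $L^q$ normalization, and passing from the individual cube estimates to the $\ell^{p^\ast}$-valued inequality is precisely the content of the lemma. Drop that remark and present the duality proof in full.
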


Let $q\in[1,\fz]$ and $s\in\zz_+$.
Denote by $L_{\rm comp}^{q,s}(\rn)$
the \emph{set of all functions $f\in L^\fz(\rn)$ with compact}
and \
$$L_{\rm comp}^{q,s}(\rn):=\lf\{f\in L_{\rm comp}^q(\rn):
\ \int_\rn f(x)x^\alpha\,dx=0,\ |\alpha|\le s\r\}.$$
As point out in \cite[p.\,3707]{ns12}, $L_{\rm comp}^{q,s}(\rn)$ is dense in
$H^{p(\cdot),q}_{\rm atom}(\rn)$.
The conclusions of the following Lemmas \ref{l-equi}  and \ref{l-dual}
were, respectively, just \cite[Theorems 4.6]{ns12} and \cite[Theorem 7.5]{ns12},
which play key roles in the proof of
Theorem \ref{t-equivalen}.

\begin{lemma}\label{l-equi}
 Let $q\in[1,\fz]$ and $p(\cdot)\in\cp(\rn)$ satisfy \eqref{ve1}, \eqref{ve2}
and $p_+\in(0,q)$.
Assume that $q$ is as in Lemma \ref{l-ineq}.
Then $\vhs=H^{p(\cdot),q}_{\rm atom}(\rn)$
with equivalent quasi-norms.
\end{lemma}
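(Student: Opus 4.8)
The plan is to prove the two inclusions $H_{\rm atom}^{p(\cdot),q}(\rn)\subset\vhs$ and $\vhs\subset H_{\rm atom}^{p(\cdot),q}(\rn)$ separately, via the grand maximal function and a Calder\'on--Zygmund decomposition adapted to the variable exponent. For the first inclusion I would begin with a uniform pointwise estimate on atoms: if $a$ is a $(p(\cdot),q,s)$-atom supported in a cube $Q$ with center $z_Q$, then, using its size and support conditions together with the $L^q$-theory of $\cm$, for $x$ in a fixed dilate $cQ$ one has $a_{N,+}^\ast(x)\ls[\cm(|a|^{q_0})(x)]^{1/q_0}$ with $q_0\in[1,q)$, while the vanishing moments of order $s$ and a Taylor expansion of the test functions give, for $x\notin cQ$, the decay $a_{N,+}^\ast(x)\ls\|a\|_{L^q(Q)}|Q|^{-1/q}[\cm(\chi_Q)(x)]^{(n+s+1)/n}$. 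Because $s>n/p_--n-1$, the exponent $(n+s+1)/n$ exceeds $1/p_-$, so, writing a general element of the atomic space as $f=\sum_j\lz_ja_j$ with $a_j$ supported in $Q_j$, applying the $p^\ast$-triangle inequality of Remark \ref{r-vlp}(iii) inside $\vlp$, splitting each $a_{j,N,+}^\ast$ into its restriction to $cQ_j$ and its tail, and then invoking Lemma \ref{l-ineq} (with $1/q$ taken small, as allowed) together with a vector-valued maximal estimate, one gets $\|f\|_{\vhs}\ls\ca(\{\lz_j\}_j,\{Q_j\}_j)$, hence $\|f\|_{\vhs}\ls\|f\|_{H_{\rm atom}^{p(\cdot),q}(\rn)}$.

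\textbf{The converse inclusion.} For $f\in\vhs$ and each $k\in\zz$, set $\boz_k:=\{x\in\rn:\ f_{N,+}^\ast(x)>2^k\}$, which is open and of finite measure; take a Whitney decomposition $\boz_k=\bigcup_jQ_j^k$ and a smooth partition of unity $\{\eta_j^k\}_j$ subordinate to a fixed dilate of it, and form the Calder\'on--Zygmund decomposition $f=g^k+\sum_jb_j^k$, where $b_j^k:=(f-c_j^k)\eta_j^k$ and $c_j^k\in\cp_s(\rn)$ is chosen so that $b_j^k$ has vanishing moments up to order $s$. Using the control of $f$ on $\rn\setminus\boz_k$ by $f_{N,+}^\ast$ and the pointwise bound of Lemma \ref{l-cm1} on $c_j^k$, one proves $\|g^k\|_{L^\fz(\rn)}\ls2^k$; moreover $g^k\to f$ in $\cs'(\rn)$ as $k\to+\fz$ and, since $f$ vanishes weakly at infinity by Corollary \ref{c-vhs-p}, $g^k\to0$ in $\cs'(\rn)$ as $k\to-\fz$, whence $f=\sum_{k\in\zz}(g^{k+1}-g^k)$ in $\cs'(\rn)$. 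Rearranging $g^{k+1}-g^k$ (absorbing the overlap corrections between consecutive levels) writes it as $\sum_j\lz_j^ka_j^k$ with $a_j^k$ a $(p(\cdot),q,s)$-atom supported in a cube comparable to $Q_j^k$ and $\lz_j^k\sim2^k\|\chi_{Q_j^k}\|_{\vlp}$.

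\textbf{Controlling the atomic quasi-norm, and the main obstacle.} Using the finite overlap of the Whitney cubes (so $\sum_j\chi_{Q_j^k}\ls\chi_{\boz_k}$), the $p^\ast$-power trick, the comparison $\lz_j^k\sim2^k\|\chi_{Q_j^k}\|_{\vlp}$, and the elementary bound $\sum_{k\in\zz}2^{kp(x)}\chi_{\boz_k}(x)\ls[f_{N,+}^\ast(x)]^{p(x)}$, one verifies that, for $\lz\sim\|f_{N,+}^\ast\|_{\vlp}$,
$$\int_\rn\lf(\sum_{k,j}\lf[\frac{\lz_j^k\chi_{Q_j^k}(x)}{\lz\,\|\chi_{Q_j^k}\|_{\vlp}}\r]^{p^\ast}\r)^{p(x)/p^\ast}\,dx\ls\int_\rn\lf[\frac{f_{N,+}^\ast(x)}{\lz}\r]^{p(x)}\,dx\le1,$$
so that $\ca(\{\lz_j^k\},\{Q_j^k\})\ls\|f\|_{\vhs}$, and a standard tail estimate shows $\sum_{k,j}\lz_j^ka_j^k=f$ in $\cs'(\rn)$. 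I expect the main obstacle to be exactly this modular estimate: in contrast with the constant-exponent case one cannot simply sum the quantities $\|\chi_{Q_j^k}\|_{\vlp}^p$, so one has to argue inside the modular, using the $p^\ast$-subadditivity of Remark \ref{r-vlp}(iii), the finite overlap of the $Q_j^k$, and---crucially---the locally log-H\"older continuity \eqref{ve1} and decay condition \eqref{ve2} of $p(\cdot)$ (through Lemma \ref{l-bigsball} and \cite[Lemma 2.2]{ns12}) to compare $\|\chi_{Q_j^k}\|_{\vlp}$ with $|Q_j^k|^{1/p(\cdot)}$ on the relevant cube. A secondary difficulty is the uniform-in-$k$ bound $\|g^k\|_{L^\fz(\rn)}\ls2^k$ and the careful bookkeeping of the corrections subtracted in passing from level $k$ to level $k+1$.
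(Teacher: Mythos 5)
Your proposal is essentially correct in outline, but note that the paper does not prove this lemma at all: it is quoted verbatim as \cite[Theorem 4.6]{ns12} (Nakai--Sawano), so the ``paper's proof'' is a citation. What you have sketched is, in effect, a reconstruction of the Nakai--Sawano argument: for $H_{\rm atom}^{p(\cdot),q}(\rn)\subset\vhs$, the uniform maximal-function estimates on atoms (local part controlled by $\cm$ in $L^q$, tail controlled by $[\cm(\chi_Q)]^{(n+s+1)/n}$ with $(n+s+1)/n>1/p_-$), combined with Remark \ref{r-vlp}(iii), Lemma \ref{l-hlmo} and, crucially, Lemma \ref{l-ineq} (which is exactly \cite[Lemma 4.11]{ns12}), is the same mechanism the present paper uses to estimate ${\rm I}_1$ and ${\rm I}_2$ in the proof of Theorem \ref{t-intrinsic}; for the converse, the Calder\'on--Zygmund decomposition at the levels $\{f^\ast_{N,+}>2^k\}$ with moment-corrected bad parts and the modular estimate $\sum_k 2^{kp^\ast}\chi_{\boz_k}\ls (f^\ast_{N,+})^{p^\ast}$ is the standard Fefferman--Stein/Latter scheme that \cite{ns12} adapts to variable exponents. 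Two small inaccuracies in your sketch: the limit $g^k\to0$ in $\cs'(\rn)$ as $k\to-\fz$ follows from the uniform bound $\|g^k\|_{L^\fz(\rn)}\ls2^k$, not from $f$ vanishing weakly at infinity (Corollary \ref{c-vhs-p} is not needed here, and weak vanishing at infinity is a consequence, not an ingredient, of membership in $\vhs$); and the passage from $g^{k+1}-g^k$ to genuine $(p(\cdot),q,s)$-atoms requires the careful bookkeeping of the polynomial corrections coming from overlapping Whitney cubes at consecutive levels, which you acknowledge but do not carry out --- this is where most of the technical work in \cite{ns12} lies. What your route buys is a self-contained proof; what the paper's route buys is brevity, since the result is already in the literature.
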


\begin{lemma}\label{l-dual}
Let $p(\cdot)\in\cp(\rn)$ satisfy \eqref{ve1},
\eqref{ve2}, $p_+\in(0,1]$, $q\in(p_+,\fz)$
and $s\in( n/{p_-}-n-1,\fz)\cap\zz_+$.
Then the dual space of $H^{p(\cdot),q}_{\rm atom}(\rn)$, denoted by
$(H^{p(\cdot),q}_{\rm atom}(\rn))^\ast$, is $\cl_{q',p(\cdot),s}(\rn)$
in the following sense:
for any $b\in\cl_{q',p(\cdot),s}(\rn)$, the linear functional
\begin{equation}\label{dual-x}
\ell_b(f):=\int_\rn b(x)f(x)\,dx,
\end{equation}
initial defined for all $f\in L_{\rm comp}^{q,s}(\rn)$,
has a bounded extension to
$H^{p(\cdot),q}_{\rm atom}(\rn)$;
conversely, if $\ell$ is a bounded linear functional on
$H^{p(\cdot),q}_{\rm atom}(\rn)$, then $\ell$ has the form as in \eqref{dual-x}
with a unique $b\in\cl_{q',p(\cdot),s}(\rn)$.

Moreover,
$$\|b\|_{\cl_{q',p(\cdot),s}(\rn)}
\sim\|\ell_b\|_{(H^{p(\cdot),q}_{\rm atom}(\rn))^\ast},$$
where the implicit positive constants are independent of $b$.
\end{lemma}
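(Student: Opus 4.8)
Since Lemma \ref{l-dual} is precisely \cite[Theorem 7.5]{ns12}, the plan is to invoke that result; for orientation we describe the route one would follow, which is the classical atomic‑Hardy/Campanato duality scheme, adapted to the variable exponent setting. For the \emph{sufficiency} (that a Campanato function induces a bounded functional), the first step is to observe that, for $b\in\cl_{q',p(\cdot),s}(\rn)$ and any $(p(\cdot),q,s)$-atom $a$ supported in a cube $Q$, the vanishing moments of $a$ allow one to subtract $P_Q^sb\in\cp_s(\rn)$, so that H\"older's inequality with exponents $q'$ and $q$ gives
$$\lf|\int_\rn ba\,dx\r|=\lf|\int_Q\lf(b-P_Q^sb\r)a\,dx\r|\le\|b-P_Q^sb\|_{L^{q'}(Q)}\,\|a\|_{L^q(Q)}\ls\|b\|_{\cl_{q',p(\cdot),s}(\rn)},$$
where the last step uses the definition of the Campanato norm together with the size condition (ii) of $a$ and the norm relations for $\chi_Q$ in $\vlp$ recorded in Lemma \ref{l-bigsball} and in \cite{ns12}. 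One then writes $f\in L_{\rm comp}^{q,s}(\rn)$ as a finite sum $\sum_j\lz_ja_j$ of multiples of atoms and upgrades this pointwise‑on‑atoms estimate to $|\ell_b(f)|\ls\|b\|_{\cl_{q',p(\cdot),s}(\rn)}\|f\|_{H_{\rm atom}^{p(\cdot),q}(\rn)}$ by feeding the $L^q$-normalized pieces into Lemma \ref{l-ineq}; the density of $L_{\rm comp}^{q,s}(\rn)$ in $H_{\rm atom}^{p(\cdot),q}(\rn)$ then produces the bounded extension, with norm controlled by $\|b\|_{\cl_{q',p(\cdot),s}(\rn)}$.

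For the \emph{necessity}, given $\ell\in(H_{\rm atom}^{p(\cdot),q}(\rn))^\ast$, I would proceed in four steps. (i) Fix a cube $Q$: every function in $L^q(\rn)$ supported in $Q$ with vanishing moments up to order $s$ becomes, after multiplication by $\|\chi_Q\|_{\vlp}/|Q|^{1/q}$ and normalization, a $(p(\cdot),q,s)$-atom, so $\ell$ restricts to a bounded functional on this closed subspace of $L^q(Q)$. (ii) Apply the Hahn--Banach theorem and the Riesz representation theorem on $L^q(Q)$ to represent $\ell$ there as integration against some $b_Q\in L^{q'}(Q)$, unique modulo $\cp_s(\rn)$. (iii) Exhaust $\rn$ by an increasing sequence of cubes and reconcile the local representatives by polynomials in $\cp_s(\rn)$, obtaining a single $b\in L_{\rm loc}^{q'}(\rn)$ with $\ell(f)=\int_\rn bf\,dx$ for all $f\in L_{\rm comp}^{q,s}(\rn)$, i.e.\ $\ell=\ell_b$ as in \eqref{dual-x} on a dense subspace. (iv) For each cube $Q$, test $\ell$ against $h:=|b-P_Q^sb|^{q'-1}\mathrm{sgn}(b-P_Q^sb)\chi_Q$ corrected by a polynomial in $\cp_s(\rn)$ so that $h$ has vanishing moments up to order $s$ (which does not change $\int_Q(b-P_Q^sb)h\,dx=\int_Q|b-P_Q^sb|^{q'}\,dx$); normalizing $h$ to a multiple of an atom and dividing then yields $\frac{|Q|}{\|\chi_Q\|_{\vlp}}[\frac1{|Q|}\int_Q|b-P_Q^sb|^{q'}\,dx]^{1/q'}\ls\|\ell\|$, and taking the supremum over all cubes $Q$ gives $\|b\|_{\cl_{q',p(\cdot),s}(\rn)}\ls\|\ell\|$. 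Uniqueness of $b$ in $\cl_{q',p(\cdot),s}(\rn)$ follows because any two representatives differ by a function that is locally a polynomial of degree at most $s$ and induces the zero functional on $H_{\rm atom}^{p(\cdot),q}(\rn)$, hence is null in $\cl_{q',p(\cdot),s}(\rn)$.

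The hard part is the quasi‑Banach nature of the problem when $p_-\le1$: the atomic quasi‑norm is not subadditive on individual atoms, so the passage from the uniform bound $|\ell_b(a_j)|\ls\|b\|_{\cl_{q',p(\cdot),s}(\rn)}$ to a bound by $\|f\|_{H_{\rm atom}^{p(\cdot),q}(\rn)}$ cannot go through $\sum_j\lz_j$ and must instead exploit the summability inequality of Lemma \ref{l-ineq}, with every cube‑dependent constant tracked via Lemma \ref{l-bigsball}; a secondary subtlety is the uniform, rescale‑invariant boundedness on $L^q(Q)$ of the polynomial projection used in step (iv). All of these points are carried out in detail in \cite{ns12}.
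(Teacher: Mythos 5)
The paper gives no proof of this lemma at all—it is quoted verbatim as \cite[Theorem 7.5]{ns12}—and since your actual plan is to invoke that same citation (the duality sketch being offered only for orientation), your proposal takes essentially the same approach as the paper. One correction to the sketch: because $p_+\in(0,1]$, the passage from the atomwise bound $|\ell_b(a_j)|\ls\|b\|_{\cl_{q',p(\cdot),s}(\rn)}$ to $|\ell_b(f)|\ls\|b\|_{\cl_{q',p(\cdot),s}(\rn)}\|f\|_{H^{p(\cdot),q}_{\rm atom}(\rn)}$ does go through the scalar sum $\sum_j\lz_j$, using the estimate $\sum_j\lz_j\ls\ca(\{\lz_j\}_j,\{Q_j\}_j)$ of \cite[Remark 4.4]{ns12} (cf.\ Remark \ref{r-adp-1}), not through Lemma \ref{l-ineq}, which bounds the $\vlp$-norm of a sum of functions and does not control $\sum_j\lz_j|\ell_b(a_j)|$.
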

The following Lemma \ref{l-intri-eq} is
just from \cite[Theorem 2.6]{lyts13},
which, in the case when $s=0$,
was first proved by Wilson \cite[Theorem 2]{wilson07}.
\begin{lemma}\label{l-intri-eq}
Let $\alpha\in(0,1]$, $s\in\zz_+$ and $\ez\in(\max\{\az,s\},\fz)$.
Then there exists a positive constant $C$ such that, for all $f$
satisfying \eqref{intrinsic-1} and $x\in\rn$,
$$\frac1C g_{\alpha,s}(f)(x)\le\wz g_{(\alpha,\ez),s}(f)(x)
\le C g_{\az,s}(f)(x).$$
\end{lemma}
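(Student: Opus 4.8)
The plan is to prove the pointwise equivalence $g_{\az,s}(f)\sim\wz g_{(\az,\ez),s}(f)$ scale by scale; this is \cite[Theorem 2.6]{lyts13} (and, for $s=0$, \cite[Theorem 2]{wilson07}), but absent that citation I would argue as follows. For each fixed $t\in(0,\fz)$ I would compare the ``grand kernel'' quantities $A_{\az,s}(f)(\cdot,t)$ and $\wz A_{(\az,\ez),s}(f)(\cdot,t)$ against one another, up to a change of scale $t\mapsto 2^kt$, and then integrate in $t$, exploiting that $\frac{dt}{t}$ is invariant under such dilations. Both convolutions make sense for $f$ satisfying \eqref{intrinsic-1}, since the kernels in $\ccc_{\az,s}(\rn)$ are compactly supported and those in $\ccc_{(\az,\ez),s}(\rn)$ decay like $(1+|\cdot|)^{-n-\ez}$.

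For the lower bound $g_{\az,s}(f)(x)\le C\wz g_{(\az,\ez),s}(f)(x)$ I would first record the elementary containment that there is $C_0\in(0,\fz)$ with $C_0\phi\in\ccc_{(\az,\ez),s}(\rn)$ for every $\phi\in\ccc_{\az,s}(\rn)$: since $\supp\phi\st B(0,1)$ and $D^\nu\phi$ (for $|\nu|=s$) is globally $\az$-H\"older with constant at most $1$, all derivatives $D^\gz\phi$ with $|\gz|\le s$ are uniformly bounded on $\rn$, and, because $(1+|x|)^{-n-\ez}\gs1$ on $B(0,1)$, the size and H\"older requirements defining $\ccc_{(\az,\ez),s}(\rn)$, in particular \eqref{intrinsic-2}, hold after multiplication by a sufficiently small $C_0$. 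Hence $A_{\az,s}(f)(y,t)\le C_0^{-1}\wz A_{(\az,\ez),s}(f)(y,t)$ for all $(y,t)\in\urn$, and the stated inequality for the $g$-functions is immediate.

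The upper bound $\wz g_{(\az,\ez),s}(f)(x)\le C g_{\az,s}(f)(x)$ is the substantial part. I would fix $\psi\in\ccc_{(\az,\ez),s}(\rn)$ and a dyadic partition of unity $1=\sum_{k\ge0}\eta_k$ with $\supp\eta_0\st B(0,2)$, $\supp\eta_k\st\{2^{k-1}<|x|<2^{k+1}\}$ for $k\ge1$ and $|D^\mu\eta_k|\ls2^{-k|\mu|}$, and set $\psi^{(k)}:=\psi\eta_k$, so $\psi=\sum_{k\ge0}\psi^{(k)}$ with $\supp\psi^{(k)}\st B(0,2^{k+1})$ and, from the decay and smoothness built into $\ccc_{(\az,\ez),s}(\rn)$, both $\|D^\mu\psi^{(k)}\|_{L^\fz(\rn)}$ ($|\mu|\le s$) and the $\az$-H\"older seminorm of $D^\nu\psi^{(k)}$ ($|\nu|=s$) of order $2^{-k(n+\ez)}$. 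The pieces $\psi^{(k)}$ lack vanishing moments, but the moments $m_{k,\gz}:=\int_\rn\psi^{(k)}(x)x^\gz\,dx$ ($|\gz|\le s$) obey $|m_{k,\gz}|\ls2^{k(|\gz|-\ez)}$ and, crucially, $\sum_{k\ge0}m_{k,\gz}=\int_\rn\psi(x)x^\gz\,dx=0$. Choosing fixed correctors $\theta_\gz\in C_c^\fz(B(0,1))$ with $\int_\rn\theta_\gz(x)x^\mu\,dx=1$ if $\mu=\gz$ and $0$ otherwise, together with their $2^k$-dilates $\Theta_{k,\gz}(x):=2^{-k(n+|\gz|)}\theta_\gz(2^{-k}x)$, I would subtract from each $\psi^{(k)}$ a suitable combination of the $\Theta_{k,\gz}$ and reorganize by Abel summation against the tails $\sum_{j\ge k}m_{j,\gz}$; using $\sum_k m_{k,\gz}=0$, this produces a new decomposition $\psi=\sum_{k\ge0}\wz\psi^{(k)}$ in which $\wz\psi^{(k)}$ is supported in $B(0,c2^k)$, has vanishing moments up to order $s$, and retains $C^{s,\az}$-seminorm of order $2^{-k(n+\ez)}$. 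Rescaling $\wz\psi^{(k)}$ to the unit ball then exhibits $(\wz\psi^{(k)})_t=\lz_k(\vz^{(k)})_{c2^kt}$ with $\vz^{(k)}\in\ccc_{\az,s}(\rn)$ and, after accounting for the dilation factors produced by the $L^1$-normalization, by the $s$-th derivatives and by the H\"older modulus, $\lz_k\ls2^{-k\dz}$ for some $\dz>0$; getting this accounting to close is exactly where the hypothesis $\ez>\max\{\az,s\}$ enters. Consequently $\wz A_{(\az,\ez),s}(f)(x,t)\ls\sum_{k\ge0}2^{-k\dz}A_{\az,s}(f)(x,c2^kt)$ uniformly in $\psi$, and the Minkowski inequality with respect to $\frac{dt}{t}$ on $(0,\fz)$ together with the substitution $t\mapsto(c2^k)^{-1}t$ give
$$\wz g_{(\az,\ez),s}(f)(x)\ls\sum_{k\ge0}2^{-k\dz}\lf\{\int_0^\fz\lf[A_{\az,s}(f)(x,c2^kt)\r]^2\,\frac{dt}{t}\r\}^{1/2}=\lf(\sum_{k\ge0}2^{-k\dz}\r)g_{\az,s}(f)(x)\ls g_{\az,s}(f)(x).$$

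The main obstacle will be this moment correction: one must produce pieces $\wz\psi^{(k)}$ that simultaneously carry all vanishing moments up to order $s$ and keep a $C^{s,\az}$-seminorm small enough that, when rescaled into the unit ball (which inflates the $s$-th derivative by $2^{ks}$ and the H\"older modulus by $2^{k\az}$), the resulting coefficients $\lz_k$ still decay geometrically in $k$. A further routine but necessary point is to justify that $\psi=\sum_k\wz\psi^{(k)}$ converges in a mode strong enough — the partial sums are $\psi\sum_{k\le K}\eta_k$ minus a tail corrector that tends to $0$ locally uniformly — that $f\ast\psi_t(x)=\sum_k f\ast(\wz\psi^{(k)})_t(x)$ holds for every $f$ satisfying \eqref{intrinsic-1}.
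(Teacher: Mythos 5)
Your lower-bound half is fine (it is exactly the observation recorded in the paper's remark that $C\phi\in\ccc_{(\az,\ez),s}(\rn)$ for every $\phi\in\ccc_{\az,s}(\rn)$), and your overall scheme for the upper bound — truncate $\psi\in\ccc_{(\az,\ez),s}(\rn)$ to dyadic annuli, restore the vanishing moments with fixed correctors and an Abel summation, and rewrite each corrected piece as a small multiple of a $2^k$-dilate of a $\ccc_{\az,s}(\rn)$ function — is the standard Wilson-type argument; note, though, that the paper itself does not prove this lemma at all but simply cites \cite[Theorem 2.6]{lyts13} (and \cite[Theorem 2]{wilson07} for $s=0$), so you are reconstructing the cited proof rather than paralleling anything in this article.

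The genuine gap is precisely at the point you flag as ``the main obstacle'': the geometric decay of the coefficients $\lz_k$ does not follow from the hypothesis $\ez>\max\{\az,s\}$ with the bookkeeping you set up. The truncated piece $\psi^{(k)}=\psi\eta_k$ has, on the $k$-th annulus, $\az$-H\"older seminorm of its $s$-th derivatives of size about $2^{-k(n+\ez)}$, and this is sharp, since the hypothesis \eqref{intrinsic-2} on $\psi$ only provides the weight $(1+|x|)^{-n-\ez}\sim 2^{-k(n+\ez)}$ there (the moment correctors are harmless: the tails satisfy $|\sum_{j\ge k}m_{j,\gz}|\ls 2^{k(|\gz|-\ez)}$, which uses $\ez>s$, and they contribute H\"older seminorm $\ls 2^{-k(n+\ez+s+\az)}$). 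Writing $(\wz\psi^{(k)})_t=\lz_k(\vz^{(k)})_{c2^kt}$ with $\vz^{(k)}(w)=\lz_k^{-1}(c2^k)^n\wz\psi^{(k)}(c2^kw)$, membership of $\vz^{(k)}$ in $\ccc_{\az,s}(\rn)$ is governed solely by the H\"older condition \eqref{condition1} on its $s$-th derivatives, whose seminorm is $\lz_k^{-1}(c2^k)^{n+s+\az}H_k$ with $H_k\sim 2^{-k(n+\ez)}$; hence the smallest admissible coefficient is $\lz_k\sim 2^{-k(\ez-\az-s)}$, and $\sum_k\lz_k$ converges only when $\ez>\az+s$, not when $\ez>\max\{\az,s\}$ (for $s\ge1$ these ranges differ). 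So your argument, even with the moment-correction and interchange-of-summation details supplied, proves the lemma only for $\ez\in(\az+s,\fz)$; to obtain the stated range $\ez\in(\max\{\az,s\},\fz)$ one would need a genuinely sharper mechanism than rescaling each annular piece by its $C^{s,\az}$ seminorm, and you do not indicate one. (For the uses made of the lemma in this paper — Theorems \ref{t-intrinsic}, \ref{t-intrinsic-1} and \ref{t-cm2} all assume $\ez\in(\az+s,\fz)$ — your restricted range would in fact suffice, but it does not establish the statement as formulated.)
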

The following Lemma \ref{l-intri-bou} is a special case of
\cite[Proposition 3.2]{lyts13}.
\begin{lemma}\label{l-intri-bou}
Let $\az\in(0,1]$, $s\in\zz_+$ and $q\in(1,\fz)$.
Then there exists a positive constant $C$ such that, for all
measurable functions $f$,
$$\int_\rn[g_{\az,s}(f)(x)]^q\,dx\le C\int_\rn|f(x)|^q\,dx.$$
\end{lemma}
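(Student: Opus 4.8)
Since this is a special case of \cite[Proposition 3.2]{lyts13}, we only indicate the strategy; we may assume $f\in L^q(\rn)$, as otherwise the right-hand side is infinite. The plan is to dominate $g_{\az,s}(f)$ pointwise by a Littlewood--Paley square function built from the Hardy--Littlewood maximal operator $\cm$, and then to invoke the Fefferman--Stein vector-valued maximal inequality together with the classical Littlewood--Paley estimate.

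First, we fix a real-valued radial function $\psi\in\cs(\rn)$, all of whose moments vanish, normalized so that $\int_0^\fz|\wh\psi(s\xi)|^2\,\frac{ds}s=1$ for all $\xi\in\rn\setminus\{0\}$; this yields the Calder\'on reproducing formula $g=\int_0^\fz\psi_s\ast\psi_s\ast g\,\frac{ds}s$ in $L^q(\rn)$, whence, for every $\phi\in\ccc_{\az,s}(\rn)$ and $t\in(0,\fz)$,
$$\phi_t\ast f=\int_0^\fz(\phi_t\ast\psi_s)\ast(\psi_s\ast f)\,\frac{ds}s.$$
The heart of the matter will be a uniform kernel estimate: there exist $\dz\in(0,\fz)$ and, for each $N\in\nn$, a positive constant $C_N$ such that, for all $\phi\in\ccc_{\az,s}(\rn)$, $s,t\in(0,\fz)$ and $z\in\rn$,
$$|\phi_t\ast\psi_s(z)|\le C_N\,\min\lf\{\lf(\frac st\r)^\dz,\,\lf(\frac ts\r)^\dz\r\}\,
\frac1{[\max\{s,t\}]^n}\lf(1+\frac{|z|}{\max\{s,t\}}\r)^{-N}.$$
When $s\le t$, we would Taylor-expand $\phi_t$ to order $s$ about $z$ and use the vanishing moments of $\psi_s$ together with the $\az$-H\"older continuity of $D^\nu\phi$ for $|\nu|=s$, which after rescaling produces the factor $(s/t)^{s+\az}$; when $t\le s$, we would instead Taylor-expand $\psi_s$ and use the vanishing moments of $\phi_t$ up to order $s$ against the smoothness of $\psi$, which produces $(t/s)^{s+1}$; thus one may take $\dz:=\az+s$. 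The decay in $z$ comes from $\supp\phi_t\subset\{x\in\rn:\ |x|\le t\}$ and the Schwartz decay of $\psi_s$, and the constants are uniform over $\ccc_{\az,s}(\rn)$ because the conditions defining this class force $\sup_{|\nu|\le s}\|D^\nu\phi\|_{L^\fz(\rn)}\ls1$. (As in Wilson \cite{wilson07}, $A_{\az,s}(f)$ is measurable, since $\ccc_{\az,s}(\rn)$ has a countable subset dense in the uniform norm and $\phi\mapsto\phi_t\ast f(y)$ is continuous, so the suprema below may be restricted to this countable set.)

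Granting this estimate, $\phi_t\ast\psi_s$ is controlled by $\min\{(s/t)^\dz,(t/s)^\dz\}$ times an $L^1$-normalized approximate identity at scale $\max\{s,t\}$, so $|(\phi_t\ast\psi_s)\ast g(z)|\ls\min\{(s/t)^\dz,(t/s)^\dz\}\,\cm(g)(z)$ for all $g$. Taking $g:=\psi_s\ast f$, using that $\int_0^\fz\min\{(s/t)^\dz,(t/s)^\dz\}\,\frac{ds}s=\int_0^\fz\min\{(s/t)^\dz,(t/s)^\dz\}\,\frac{dt}t=2/\dz$, and applying the Cauchy--Schwarz inequality in $s$ and then Fubini's theorem, we expect to arrive at the pointwise bound
$$g_{\az,s}(f)(x)\ls\lf\{\int_0^\fz[\cm(\psi_s\ast f)(x)]^2\,\frac{ds}s\r\}^{1/2}.$$
Then the (continuous-parameter) Fefferman--Stein vector-valued maximal inequality on $L^q(\rn)$ with $q\in(1,\fz)$, followed by the classical estimate $\|\{\int_0^\fz|\psi_s\ast f|^2\,\frac{ds}s\}^{1/2}\|_{L^q(\rn)}\ls\|f\|_{L^q(\rn)}$, will give
$$\|g_{\az,s}(f)\|_{L^q(\rn)}\ls\lf\|\lf\{\int_0^\fz[\cm(\psi_s\ast f)]^2\,\frac{ds}s\r\}^{1/2}\r\|_{L^q(\rn)}\ls\|f\|_{L^q(\rn)},$$
as desired. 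The step I expect to be the main obstacle is the uniform kernel estimate for $\phi_t\ast\psi_s$: one must extract a genuine positive power of $\min\{s/t,t/s\}$ with constants independent of $\phi\in\ccc_{\az,s}(\rn)$, which forces a careful two-regime Taylor-expansion argument that simultaneously exploits the cancellation of $\phi$ (respectively $\psi$) and the limited $C^{s,\az}$ regularity of $\phi$, while retaining enough spatial decay in $z$ to pass to the maximal-function domination; everything else is standard Littlewood--Paley and Fefferman--Stein machinery.
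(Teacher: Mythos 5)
Your proposal is correct in substance, but note how it relates to the paper: the paper gives no proof of this lemma at all, it simply imports it as a special case of \cite[Proposition 3.2]{lyts13} (which, for $s=0$, goes back to Wilson \cite{wilson07} and is proved there by weighted-norm techniques), whereas you sketch a self-contained proof of the unweighted $L^q$ bound by the classical almost-orthogonality route: Calder\'on reproducing formula, a two-regime kernel estimate for $\phi_t\ast\psi_s$ uniform over $\phi\in\ccc_{\az,s}(\rn)$, domination by the Hardy--Littlewood maximal operator, Cauchy--Schwarz plus Fubini, and finally the Fefferman--Stein vector-valued inequality together with the classical Littlewood--Paley estimate. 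The ingredients you list do work: the uniform bound $\sup_{|\nu|\le s}\|D^\nu\phi\|_{L^\fz(\rn)}\ls1$ indeed follows from the compact support combined with the H\"older condition at order $s$; in the regime $s\le t$ the cancellation of $\psi_s$ against the $C^{s,\az}$ regularity of $\phi_t$ gives the factor $(s/t)^{s+\az}$ for $|z|\ls t$, while for $|z|\gg t$ the support of $\phi_t$ forces $|y|\gs|z|$ in the convolution so that the Schwartz decay of $\psi_s$ yields both the decay in $|z|/t$ and an arbitrarily large power of $s/t$; in the regime $t\le s$ the vanishing moments of $\phi_t$ against the smoothness of $\psi_s$ give $(t/s)^{s+1}\le(t/s)^{s+\az}$, so $\dz=\az+s$ is admissible; and the remaining steps (maximal-function domination by an $L^1$-normalized kernel, $\int_0^\fz\min\{(s/t)^\dz,(t/s)^\dz\}\,\frac{ds}s=2/\dz$, the continuous-parameter Fefferman--Stein inequality, and $\|\{\int_0^\fz|\psi_s\ast f|^2\frac{ds}s\}^{1/2}\|_{L^q(\rn)}\ls\|f\|_{L^q(\rn)}$) are standard. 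The only caveat is that your text is explicitly an outline: the kernel estimate and the passage from the $L^q$-convergent reproducing formula to the pointwise identity for $\phi_t\ast f$ are asserted rather than carried out, but the arguments you indicate do close these steps, so there is no genuine gap. What your route buys is an elementary, unweighted proof of exactly the estimate the paper needs (it is only applied to atoms, which lie in $L^q$), at the price of redoing work the paper simply cites; what the paper's citation buys is the stronger weighted/Musielak--Orlicz statement of \cite{lyts13} without any local argument.
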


Now we come to give a proof of Theorem \ref{t-intrinsic}.

\begin{proof}[Proof of Theorem \ref{t-intrinsic}]
For $\ez\in(\az+s,\fz)$, by Lemma \ref{l-intri-eq}, we see that
$g_{\az,s}(f)$ and $\wz g_{(\az,\ez),s}(f)$ are pointwise comparable.
Thus, to prove Theorem \ref{t-intrinsic},
it suffices to show that the conclusion of Theorem \ref{t-intrinsic}
holds true for the intrinsic square function $g_{\az,s}(f)$.

Let $f\in(\cl_{1,p(\cdot),s}(\rn))^\ast$ vanish weakly at infinity
and $g_{\az,s}(f)\in\vlp$.
Then, by Lemma \ref{l-sembed}, we find that
$f\in\cs'(\rn)\subset\cs_\fz'(\rn)$.
Notice that, for all $x\in\rn$,
$g(f)(x)\ls \wz g_{(\az,\ez),s}(f)(x)\sim g_{\az,s}(f)(x)$
(see Lemma \ref{l-intri-eq}),
it follows that $g(f)\in\vlp$.
From this and Theorem \ref{t-equivalen}, we deduce that
there exists a distribution $\wz f\in\cs'(\rn)$ such that
$\wz f=f$ in $\cs_\fz'(\rn)$, $\wz f\in\vhs$ and
$\|\wz f\|_{\vhs}\ls\|g(f)\|_{\vlp}$,
which, together with Corollary \ref{c-vhs-p} and the fact that
$f$ vanishes weakly at infinity, implies that $f=\wz f$ in $\cs'(\rn)$ and hence
$$\|f\|_{\vhs}\sim\|\wz f\|_{\vhs}\ls\|g(f)\|_{\vlp}\ls\|g_{\az,s}(f)\|_{\vlp}.$$
This finishes the proof of the sufficiency of Theorem \ref{t-equivalen}.

It remains to prove the necessity. Let $f\in\vhs$.
Then, by Corollary \ref{c-vhs-p},
we see that $f$ vanishes weakly at infinity
and, by Lemmas \ref{l-equi} and \ref{l-dual},
we have $f\in(\cl_{1,p(\cdot),s}(\rn))^\ast$.
If $q\in(1,\fz)$ is as in Lemma \ref{l-ineq},
then, by Lemma \ref{l-equi}, we know that
there exist a sequence $\{\lz_j\}_j$ of nonnegative numbers
and a sequence $\{a_j\}_j$ of $(p(\cdot),q,s)$-atoms,
with supp $a_j\subset Q_j$ for all $j$,
such that $f=\sum_{j}\lz_ja_j$ in $\cs'(\rn)$ and also in $\vhs$ and, moreover
\begin{equation}\label{estimate-3}
\ca(\{\lz_j\}_j,\{Q_j\}_j)\ls\|f\|_{\vhs}.
\end{equation}
Thus, by Lemma \ref{l-sembed}, we find that,
for all $\phi\in\ccc_{(\alpha,\vez),s}(\rn)$,
$f\ast \phi=\sum_j\lz_j a_j\ast \phi$ pointwise and hence, for all $x\in\rn$,
$g_{\az,s}(f)(x)\le\sum_j\lz_jg_{\az,s}(a_j)(x)$.

Now, for a $(p(\cdot),q,s)$-atom $a$ with supp $a\subset Q:=Q(x_0,r)$,
we estimate $g_{\az,s}(a)$. By
Lemma \ref{l-intri-bou}, we find that
\begin{eqnarray}\label{estimate-2}
\|g_{\az,s}(a)\|_{L^q(2\sqrt{n}Q)}
\ls\|a\|_{L^q(\rn)}
\ls\frac{|Q|^{1/q}}{\|Q\|_{\vlp}},
\end{eqnarray}
here and hereafter, $2\sqrt n Q$ denotes the cube with the center same as $Q$ but with
the side length $2\sqrt n$ times $Q$.

On the other hand, for all $x\notin 2\sqrt{n}Q$,
by the vanishing moment condition of $a$ and \eqref{condition1},
together with Taylor's remainder theorem, we see that
\begin{eqnarray}\label{estimate-1}
|a\ast \phi_t(x)|&&=\frac1{t^n}\lf|\int_\rn a(y)\lf[\phi\lf(\frac{x-y}{t}\r)-
\sum_{|\beta|\le s}\frac{D^\beta\phi(\frac{x-x_0}{t})}{\beta!}
\lf(\frac{x_0-y}{t}\r)^\beta\r]\,dy\r|\\
&&\ls\int_\rn|a(y)|\frac{|y-x_0|^{\az+s}}{t^{n+\az+s}}\,dy
\ls\frac1{\|\chi_Q\|_{\vlp}}\lf(\frac rt\r)^{n+\az+s}.\noz
\end{eqnarray}
Notice that supp $\phi\subset\{x\in\rn:\ |x|\le1\}.$ If $x\notin 2\sqrt{n}Q$
and $\phi_t\ast a(x)\neq0$, then, there exists a $y\in Q$ such that
$|x-y|/t\le1$ and hence
$t\ge|x-y|\ge|x-x_0|-|x_0-y|>|x-x_0|/2.$
From this and \eqref{estimate-1}, we deduce that
\begin{eqnarray*}
g_{\az,s}(a)(x)
&&=\lf\{\int_0^\fz\lf[\sup_{\phi\in\ccc_{\az,s}(\rn)}
|a\ast\phi_t(x)|\r]^2\,\frac{dt}{t}\r\}^{1/2}\\
&&\ls\frac1{\|\chi_Q\|_{\vlp}}r^{n+\az+s}
\lf\{\int_{\frac{|x-x_0|}2}^\fz t^{-2(n+\az+s)}\,dt\r\}^{1/2}\noz\\
&&\ls\frac1{\|\chi_Q\|_{\vlp}}\lf(\frac{r}{|x-x_0|}\r)^{n+\az+s}
\ls\frac{[\cm(\chi_Q)(x)]^{\frac{n+\az+s}n}}{\|\chi_Q\|_{\vlp}},\noz
\end{eqnarray*}
which implies that
\begin{eqnarray}\label{estimate-5}
\qquad\|g_{\az,s}(f)\|_{\vlp}&&\ls\lf\|\sum_j\lz_jg_{\az,s}(a_j)\chi_{2\sqrt{n}Q_j}\r\|_{\vlp}
+\lf\|\sum_j\lz_j\frac{[\cm(\chi_{Q_j})]^{\frac{n+\az+s}n}}
{\|\chi_{Q_j}\|_{\vlp}}\r\|_{\vlp}\\
&&=:{\rm I}_1+{\rm I}_2.\noz
\end{eqnarray}

For ${\rm I}_1$, by taking $b_j:={ g_{\az,s}(a_j)\chi_{2\sqrt{n}Q_j}}$
for each $j$ in Lemma \ref{l-ineq},
\eqref{estimate-2} and Lemma \ref{l-bigsball}, we conclude that
\begin{eqnarray}\label{estimate-4}
{\rm I}_1
&&\ls\lf\|\sum_j\frac{\lz_jb_j|Q_j|^{\frac1q}}
{\|b_j\|_{L^q(2\sqrt{n}Q_j)}\|\chi_{2\sqrt{n}Q_j}\|_{\vlp}}\r\|_{\vlp}\\
&&\ls\lf\|\lf\{\sum_j\lf(\frac{\lz_jb_j|Q_j|^{\frac1q}}
{\|b_j\|_{L^q(2\sqrt{n}Q_j)}\|\chi_{2\sqrt{n}Q_j}\|_{\vlp}}\r)
^{p^\ast}\r\}^{1/p^\ast}\r\|_{\vlp}\!\!\ls\!\!\ca(\{\lz_j\}_j,\{Q_j\}_j).\noz
\end{eqnarray}

For ${\rm I}_2$, letting $\theta:={(n+\az+s)}/n$,
by Lemma \ref{l-hlmo} and $p_-\in( n/{(n+\az+s)},\fz)$, we find that
\begin{eqnarray*}
{\rm I}_2
&&\ls\lf\|\lf\{\sum_j\frac{\lz_j[\cm(\chi_{Q_j})]^\theta}
{\|\chi_{Q_j}\|_{\vlp}}\r\}^\frac1{\theta}\r\|_{L^{\theta p(\cdot)}
(\rn)}^{\theta}
\ls\lf\|\sum_j\frac{\lz_j\chi_{Q_j}}
{\|\chi_{Q_j}\|_{\vlp}}\r\|_{L^{p(\cdot)}(\rn)}\\
&&\ls\lf\|\lf\{\sum_j\lf(\frac{\lz_j\chi_{Q_j}}
{\|\chi_{Q_j}\|_{\vlp}}\r)^{p^\ast}\r\}^{\frac1{p^\ast}}\r\|_{L^{p(\cdot)}(\rn)}
\sim\ca(\{\lz_j\}_j,\{Q_j\}_j).
\end{eqnarray*}
From this, together with \eqref{estimate-3}, \eqref{estimate-5}
and \eqref{estimate-4},
we deduce that
$$\|g_{\az,s}(f)\|_{\vlp}\ls\|f\|_{\vhs},$$
which completes the proof of Theorem \ref{t-intrinsic}.
\end{proof}

For $s\in\zz_+$, $\alpha\in(0,1]$ and $\ez\in(0,\fz)$,
let $\ccc_{(\az,\ez),s}(y,t)$, with $y\in\rn$ and $t\in(0,\fz)$, be the
\emph{family of functions} $\psi\in C^s(\rn)$ such that,
for all $\gamma\in\zz_+^n$, $|\gamma|\le s$ and $x\in\rn$,
$|D^\gamma\psi(x)|\le t^{-n-|\gamma|}(1+|y-x|/t)^{-n-\ez}$,
$\int_\rn\psi(x)x^\gamma\,dx=0$
and, for all $x_1,\ x_2\in\rn$, $\nu\in\zz_+^n$ and $|\nu|=s$,
$$|D^\nu\psi(x_1)-D^\nu\psi(x_2)|
\le\frac{|x_1-x_2|^\az}{t^{n+\gamma+\az}}
\lf[\lf(1+\frac{|y-x_1|}{t}\r)^{-n-\ez}+\lf(1+\frac{|y-x_2|}{t}\r)^{-n-\ez}\r].$$

The proof of Theorem \ref{t-intrinsic-1} needs the following Lemma \ref{l-claim},
whose proof is trivial, the details being omitted.
\begin{lemma}\label{l-claim}
Let $s\in\zz_+$, $\alpha\in(0,1]$, $\ez\in(0,\fz)$ and $f$ be
a measurable function
satisfying $\eqref{intrinsic-1}$.
\begin{itemize}
\item[\rm(i)] For any $y\in\rn$ and $t\in(0,\fz)$, it holds true that
$$\wz A_{(\az,\ez),s}(f)(y,t)=\sup_{\psi\in\ccc_{(\az,\ez),s}(y,t)}
\lf|\int_\rn\psi(x)f(x)\,dx\r|.$$

\item[\rm(ii)] If $t_1,\ t_2\in(0,\fz)$, $t_1<t_2$, $y\in\rn$ and
$\psi\in\ccc_{(\az,\ez),s}(y,t_1)$, then
$(\frac{t_1}{t_2})^{n+s+\az}\psi\in\ccc_{(\az,\ez),s}(y,t_2)$.
\end{itemize}
\end{lemma}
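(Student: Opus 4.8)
The plan is to deduce both parts from the change of variables in the convolution integral together with elementary monotonicity in the dilation parameter; no genuine estimate is involved, which is why the statement says the details are omitted.

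For part (i), I would start from the identity $f\ast\phi_t(y)=\int_\rn t^{-n}\phi((y-x)/t)f(x)\,dx=\int_\rn\psi(x)f(x)\,dx$, where $\psi(x):=t^{-n}\phi((y-x)/t)$, valid for every $\phi\in\ccc_{(\az,\ez),s}(\rn)$ and every $f$ satisfying \eqref{intrinsic-1}. Thus it suffices to check that $\phi\mapsto\psi$ is a bijection from $\ccc_{(\az,\ez),s}(\rn)$ onto $\ccc_{(\az,\ez),s}(y,t)$, since then taking the supremum over $\phi$ (equivalently over $\psi$) yields the claimed identity. The verification amounts to translating the three defining conditions: from $D^\gamma\psi(x)=(-1)^{|\gamma|}t^{-n-|\gamma|}(D^\gamma\phi)((y-x)/t)$, the bound $|D^\gamma\phi(\cdot)|\le(1+|\cdot|)^{-n-\ez}$ becomes exactly $|D^\gamma\psi(x)|\le t^{-n-|\gamma|}(1+|y-x|/t)^{-n-\ez}$, and the Hölder condition for $D^\nu\phi$ with $|\nu|=s$ becomes the one for $D^\nu\psi$ after pulling out the dilation factor $t^{-n-s-\az}$. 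For the vanishing moments, the substitution $u=(y-x)/t$ gives $\int_\rn\psi(x)x^\gamma\,dx=\pm\int_\rn\phi(u)(y-tu)^\gamma\,du$; since $|\gamma|\le s$, the polynomial $(y-tu)^\gamma$ in $u$ has degree at most $s$, so it is annihilated by the moment conditions on $\phi$, and the converse implication is identical.

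For part (ii), I would set $\wz\psi:=(t_1/t_2)^{n+s+\az}\psi$ and check the three conditions defining $\ccc_{(\az,\ez),s}(y,t_2)$ directly. The moment condition is immediate, since $\wz\psi$ is a scalar multiple of $\psi$. For the size bound, for $|\gamma|\le s$ one has $(t_1/t_2)^{n+s+\az}t_1^{-n-|\gamma|}=(t_1/t_2)^{s+\az-|\gamma|}t_2^{-n-|\gamma|}\le t_2^{-n-|\gamma|}$, using $t_1<t_2$ and $s+\az-|\gamma|\ge\az>0$, together with $(1+|y-x|/t_1)^{-n-\ez}\le(1+|y-x|/t_2)^{-n-\ez}$ (again because $t_1<t_2$ and the exponent is negative); multiplying these gives $|D^\gamma\wz\psi(x)|\le t_2^{-n-|\gamma|}(1+|y-x|/t_2)^{-n-\ez}$. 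The Hölder condition is handled the same way, now with the exact identity $(t_1/t_2)^{n+s+\az}t_1^{-(n+s+\az)}=t_2^{-(n+s+\az)}$ and with each of the two tail factors increased by replacing $t_1$ by $t_2$. The only point requiring a little care is keeping the exponents of $t$ straight, in particular confirming in (i) that $\deg_u(y-tu)^\gamma\le s$ so that all its monomials are killed by the moment conditions; beyond that there is no real obstacle.
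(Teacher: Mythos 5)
Your proof is correct: the change of variables $\psi(x)=t^{-n}\phi((y-x)/t)$ indeed gives a bijection between $\ccc_{(\az,\ez),s}(\rn)$ and $\ccc_{(\az,\ez),s}(y,t)$ yielding (i), and the exponent bookkeeping in (ii), including the key inequality $(t_1/t_2)^{s+\az-|\gamma|}\le 1$ and the monotonicity of the tail factors, is exactly right. The paper omits the proof as trivial, and your argument supplies precisely the routine verification that was intended.
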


\begin{proof}[Proof of Theorem \ref{t-intrinsic-1}]
If $f\in(\cl_{1,p(\cdot),s}(\rn))^\ast$,
$\wz g_{\lz,(\az,\ez),s}^\ast(f)\in\vlp$ and
$f$ vanishes weakly at infinity, then, by Lemma \ref{l-sembed}, we see that
$f\in\cs'(\rn)$ and, by the fact that, for all $x\in\rn$,
$$g_{\lz}^\ast(f)(x)\ls g_{\lz,\az,s}^\ast(f)(x)\ls\wz
g_{\lz,(\az,\ez),s}^\ast(f)(x)$$
and Theorem \ref{t-equivalen}, we further know that
$f\in\vhs$ and
$$\|f\|_{\vhs}\ls\|g_\lz^\ast(f)\|_{\vlp}\ls
\|g_{\lz,\az,s}^\ast(f)\|_{\vlp}\ls\|\wz g_{\lz,(\az,\ez),s}^\ast(f)\|_{\vlp}.$$
This finishes the proof of the sufficiency of Theorem \ref{t-intrinsic-1}.

Next we prove the necessity of Theorem \ref{t-intrinsic-1}.
Let $f\in\vhs$. Then, as in the proof of Theorem \ref{t-intrinsic},
we see that $f\in(\cl_{1,p(\cdot),s}(\rn))^\ast$ and $f$
vanishes weakly at infinity.
For all $x\in\rn$, we have
\begin{eqnarray}\label{gast-1}
&&[\wz g_{\lz,(\az,\ez),s}^\ast(f)(x)]^2\\
&&\hs=\int_0^\fz\int_{|y-x|<t}\lf(\frac t{t+|x-y|}\r)^{\lz n}
[\wz A_{(\az,\ez),s}(f)(y,t)]^2\,\frac{dy\,dt}{t^{n+1}}\noz\\
&&\hs\hs+\sum_{k=1}^\fz\int_0^\fz\int_{2^{k-1}t\le|y-x|<2^kt}
\cdots\,\frac{dy\,dt}{t^{n+1}}\noz\\
&&\hs\ls [\wz S_{(\az,\ez),s}(f)(x)]^2+\sum_{k=1}^\fz2^{-k\lz n}
\int_0^\fz\int_{|y-x|<2^kt}
[\wz A_{(\az,\ez),s}(f)(y,t)]^2\,\frac{dy\,dt}{t^{n+1}}\noz\\
&&\hs\sim[\wz S_{(\az,\ez),s}(f)(x)]^2+
\sum_{k=1}^\fz2^{-k\lz n}2^{kn}\int_0^\fz\int_{|y-x|<t}
[\wz A_{(\az,\ez),s}(f)(y,2^{-k}t)]^2\,\frac{dy\,dt}{t^{n+1}}.\noz
\end{eqnarray}
By Lemma \ref{l-claim}, we find that, for all $k\in\nn$ and $(y,t)\in\urn$,
\begin{eqnarray*}
\wz A_{(\az,\ez),s}(f)(y,2^{-k}t)
&&=\sup_{\psi\in \ccc_{(\az,\ez),s}(y,2^{-k}t)}\lf|\int_{\rn}\psi(x)f(x)\,dx\r|\\
&&\le2^{k(n+s+\az)}\sup_{\wz\psi\in \ccc_{(\az,\ez),s}(y,t)}
\lf|\int_{\rn}\wz\psi(x)f(x)\,dx\r|
=2^{k(n+s+\az)}\wz A_{(\az,\ez),s}(f)(y,t),
\end{eqnarray*}
which, together with \eqref{gast-1} and $\lz\in(3+{2(s+\az)}/n,\fz)$,
implies that
\begin{eqnarray*}
[\wz g_{\lz,(\az,\ez),s}^\ast(f)(x)]^2
\ls\sum_{k=0}^\fz2^{-k\lz n}2^{k(3n+2s)}[\wz S_{(\az,\ez),s}(f)(x)]^2
\sim[\wz S_{(\az,\ez),s}(f)(x)]^2.
\end{eqnarray*}
From this, together with Theorem \ref{c-intrinsic},
we deduce that $\wz g_{\lz,(\az,\ez),s}^\ast(f)\in\vlp$ and
$$\|g_{\lz,\az,s}^\ast(f)\|_{\vlp}
\ls\|\wz g_{\lz,(\az,\ez),s}^\ast(f)\|_{\vlp}
\ls\|\wz S_{(\az,\ez),s}(f)\|_{\vlp}\ls\|f\|_{\vhs},$$
which completes the proof of Theorem \ref{t-intrinsic-1}.
\end{proof}

To prove Theorem \ref{t-cm1}, we first introduce the tent space with
variable exponent. For all measurable functions $g$
on $\rr_+^{n+1}$ and $x\in\rn$, define
$$\ca(g)(x)
:=\lf\{\int_0^\fz\int_{\{y\in\rn:\ |y-x|<t\}}|g(y,t)|^2\,
\frac{dy\,dt}{t^{n+1}}\r\}^{1/2}.$$
Recall that a measurable function $g$ is said to belong to the \emph{tent space}
$T_2^p(\rr_+^{n+1})$ with $p\in(0,\fz)$, if
$\|g\|_{T_2^p(\rr_+^{n+1})}:=\|\ca(g)\|_{L^p(\rn)}<\fz$.

Let $p(\cdot)\in\cp(\rn)$ satisfy \eqref{ve1} and \eqref{ve2}.
In what follows, we denote by $T^{p(\cdot)}_2(\rr_+^{n+1})$ the \emph{space} of all
measurable functions $g$ on $\rr_+^{n+1}$ such that $\ca(g)\in\vlp$ and,
for any $g\in T^{p(\cdot)}_2(\rr_+^{n+1})$, its \emph{quasi-norm} is defined by
$$\|g\|_{T^{p(\cdot)}_2(\rr_+^{n+1})}
:=\|\ca(g)\|_{\vlp}
:=\inf\lf\{\lz\in(0,\fz)
:\ \int_\rn\lf(\frac{\ca(g)(x)}{\lz}\r)^{p(x)}\,dx\le1\r\}.$$

Let $p\in(1,\fz)$. A function $a$ on $\rr_+^{n+1}$ is called a
$(p(\cdot),p)$-\emph{atom} if
there exists a cube $Q\subset\rn$ such that ${\rm supp}\,a\subset\wh Q$
and $\|a\|_{T_2^p(\rr_+^{n+1})}\le |Q|^{1/p}\|\chi_Q\|_{\vlp}^{-1}$.
Furthermore, if $a$ is a $(p(\cdot),p)$-atom for all
 $p\in(1,\fz)$, we then call $a$
a $(p(\cdot),\fz)$-\emph{atom}.

For functions in the space $T^{p(\cdot)}_2(\urn)$,
we have the following atomic decomposition.

\begin{theorem}\label{t-tentad}
Let $p(\cdot)\in\cp(\rn)$ satisfy \eqref{ve1} and \eqref{ve2}. Then, for any
$f\in T^{p(\cdot)}_2(\urn)$, there exist $\{\lz_j\}_{j}\subset\cc$
and a sequence $\{a_j\}_{j}$ of $(p(\cdot),\fz)$-atoms such that, for almost every
$(x,t)\in\urn$, $ f(x,t)=\sum_{j}\lz_ja_j(x,t).$
Moreover, there exists a positive constant $C$ such that, for all
$f\in T^{p(\cdot)}_2(\urn)$, $\ca^\ast(\{\lz_j\}_{j},\{Q_j\}_{j})
\le C\|f\|_{T^{p(\cdot)}_2(\urn)}$, where
\begin{eqnarray}\label{tentad}
\ca^\ast(\{\lz_j\}_{j},\{Q_j\}_{j}):=\inf\lf\{\lz\in(0,\fz):\ \sum_{j}\int_{Q_j}
\lf[\frac{\lz_j}{\lz\|\chi_{Q_j}\|_{\vlp}}\r]^{p(x)}\,dx\le1\r\}
\end{eqnarray}
and, for each $j$, $Q_j$ appears in the support of $a_j$.
\end{theorem}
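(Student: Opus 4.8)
The plan is to follow the classical Coifman--Meyer--Stein construction of tent-space atoms via level sets of the area function, and then replace the $L^p$ quasi-norm estimates by their variable-exponent counterparts using the properties of $\|\cdot\|_{\vlp}$ recorded in Remark \ref{r-vlp} together with Lemma \ref{l-bigsball}. So let $f\in T_2^{p(\cdot)}(\urn)$ and for each $k\in\zz$ set $O_k:=\{x\in\rn:\ \ca(f)(x)>2^k\}$. Each $O_k$ is open; decompose it into a Whitney family of dyadic cubes $\{Q_{k,i}\}_i$ with bounded overlap and $\mathrm{diam}\,Q_{k,i}\sim\dist(Q_{k,i},\rn\setminus O_k)$. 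One then defines the ``truncated tent'' pieces $T_{k,i}:=\wh{Q_{k,i}}\setminus(\text{part already covered by higher levels})$, organised so that $\{(y,t): f(y,t)\neq0\}=\bigcup_{k,i}T_{k,i}$ up to a null set, and puts $a_{k,i}:=\lz_{k,i}^{-1}f\chi_{T_{k,i}}$ with the normalising constant $\lz_{k,i}$ chosen below. The geometric facts that make this work (each $T_{k,i}\subset\wh{c\,Q_{k,i}}$ for a dimensional constant $c$, and $\ca(f\chi_{T_{k,i}})$ is supported in $c\,Q_{k,i}$ with $\|f\chi_{T_{k,i}}\|_{T_2^\infty}\ls 2^k$) are exactly as in \cite{lsuyy} and the references therein, and I would quote them rather than re-derive them.

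The first key step is the atom normalisation: by the stopping-time estimate $\|f\chi_{T_{k,i}}\|_{T_2^\infty(\urn)}\ls 2^k$, the function $a_{k,i}$ is (a constant multiple of) a $(p(\cdot),\infty)$-atom associated to $\wh{c\,Q_{k,i}}$ provided we set
$$
\lz_{k,i}:=C\,2^k\,\|\chi_{Q_{k,i}}\|_{\vlp},
$$
since then $\|a_{k,i}\|_{T_2^p(\urn)}\le |Q_{k,i}|^{1/p}\|\chi_{Q_{k,i}}\|_{\vlp}^{-1}$ for every $p\in(1,\fz)$, which is the definition of a $(p(\cdot),\infty)$-atom; here one uses Lemma \ref{l-bigsball} to pass between $Q_{k,i}$ and $c\,Q_{k,i}$ at the cost of a harmless constant. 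Writing $f=\sum_{k,i}\lz_{k,i}a_{k,i}$ pointwise a.e.\ on $\urn$ is then immediate from the disjointness of the $T_{k,i}$'s.

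The second key step, and the one I expect to be the main obstacle, is the norm control $\ca^\ast(\{\lz_{k,i}\},\{Q_{k,i}\})\ls\|f\|_{T_2^{p(\cdot)}(\urn)}$, i.e.\ controlling the Luxemburg-type functional in \eqref{tentad}. The heart of the matter is the pointwise bound, valid for every $k$,
$$
\sum_i\chi_{Q_{k,i}}(x)\ls \chi_{\{\ca(f)>2^k\}}(x)\cdot\chi_{\{\cm(\chi_{\{\ca(f)>2^k\}})(x)>\gamma\}}(x)\quad\text{for some fixed }\gamma\in(0,1),
$$
which follows from the finite overlap of the Whitney cubes and the Whitney property $\dist(Q_{k,i},\rn\setminus O_k)\sim\ell(Q_{k,i})$ (this forces a definite fraction of $Q_{k,i}$ to lie outside $O_k$, so $\cm(\chi_{O_k})\gs1$ on $Q_{k,i}$, while $Q_{k,i}\subset O_k$). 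Plugging $\lz_{k,i}=C2^k\|\chi_{Q_{k,i}}\|_{\vlp}$ into \eqref{tentad} and using this overlap bound reduces the claim, after summing a geometric series in $k$ against the layer-cake decomposition of $\int_\rn\ca(f)(x)^{p(x)}\,dx$, to the boundedness of $\cm$ on $L^{p(\cdot)/p_0}(\rn)$ for some $p_0\in(0,p_-)$; this in turn is available since $p(\cdot)$ satisfies \eqref{ve1} and \eqref{ve2} (one may invoke Lemma \ref{l-hlmo} applied to the exponent $p(\cdot)/p_0$, which again is log-H\"older and has lower index $>1$). The delicate point is to carry out this reduction at the level of the homogeneous Luxemburg functional rather than a genuine norm: one fixes a competitor $\lz$, uses the subadditivity $\|\cdot\|_{\vlp}^{p^\ast}$ of Remark \ref{r-vlp}(iii) and the equivalence $\|\chi_{Q_{k,i}}\|_{\vlp}^{p(x)}\sim|Q_{k,i}|$-type estimates from \cite[Lemma 2.2]{ns12} to turn $\sum_{k,i}\int_{Q_{k,i}}[\lz_{k,i}/(\lz\|\chi_{Q_{k,i}}\|_{\vlp})]^{p(x)}\,dx$ into $\lz^{-\underline p}\sum_k 2^{k\,p(\cdot)}\|\cm(\chi_{O_k})\|_{\vlp}^{\sharp}$-type sums, and finally chooses $\lz\sim\|\ca(f)\|_{\vlp}$. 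I would present this computation carefully but would suppress the purely arithmetic manipulations, citing \cite{lsuyy} and \cite[Theorem 2.16, Lemma 2.2]{ns12} for the routine variable-exponent inequalities and the overlap lemma, and emphasising only the choice of $\lz_{k,i}$, the Whitney/maximal-function overlap inequality, and the role of the boundedness of $\cm$ on a power of $L^{p(\cdot)}(\rn)$.
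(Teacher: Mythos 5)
Your proposal follows the same Coifman--Meyer--Stein stopping-time construction as the paper: Whitney-decompose the level sets of $\ca(f)$, form truncated-tent atoms normalized by $\lz_{k,i}\sim 2^k\|\chi_{Q_{k,i}}\|_{\vlp}$, and control $\ca^\ast$ via the boundedness of $\cm$ on variable Lebesgue spaces (Lemma \ref{l-hlmo}). The only substantive difference is one of packaging: the paper works with the dilated sets $O_k^\ast=\{x:\ \cm(\chi_{O_k})(x)>1-\gamma\}$ and verifies the atom bound for each $p\in(1,\fz)$ by duality with $T_2^{p'}(\urn)$ together with the averaging Lemma \ref{l-tentad1}, which is precisely the content of (and the justification needed for) your asserted stopping-time estimate $\|f\chi_{T_{k,i}}\|_{T_2^\fz}\ls 2^k$ and its upgrade to all $T_2^p$ norms.
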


\begin{remark}\label{r-adp-1}
Assume that $p_+\in(0,1]$. Then, by \cite[Remark 4.4]{ns12}, we know that,
for any sequences $\{\lz_j\}_{j}$ of nonnegative numbers and cubes
$\{Q_j\}_{j}$,
$\sum_{j}\lz_j\le \ca^\ast(\{\lz_j\}_{j},\{Q_j\}_{j}).$
\end{remark}
The proof of Theorem \ref{t-tentad} is similar to
that of \cite[Theorem 3.2]{hyy} (see also \cite[Theorem 3.1]{jyjfa10}).
To this end, we need some known facts as follows
(see, for example, \cite[Theorem 3.1]{jyjfa10}).

Let $F$ be a closed subset of $\rn$ and $O:=\rn\backslash F=:F^\complement$.
Assume that $|O|<\fz$.
For any fixed $\gamma\in(0,1)$, $x\in\rn$ is said to have the
\emph{global $\gamma$-density} with respect to $F$ if, for all $t\in(0,\fz)$,
${|B(x,t)\cap F|}/{|B(x,t)|}\ge\gz$. Denote by $F_\gz^\ast$ the
\emph{set of all such $x$} and let $O_\gz^\ast:=(F_\gz^\ast)^\complement$.
Then
$$O_\gamma^\ast=\{x\in\rn:\ \cm(\chi_O)(x)>1-\gamma\}$$
 is open, $O\subset O_\gamma^\ast$
and there exists a positive constant $C_{(\gamma)}$,
depending on $\gamma$, such that
$|O_\gamma^\ast|\le C_{(\gamma)}|O|$.
For any $\nu\in(0,\fz)$ and $x\in\rn$, let
$\Gamma_\nu(x):=\{(y,t)\in\urn:\ |x-y|<\nu t\}$ be the
\emph{cone of aperture $\nu$
with vertex $x\in\rn$} and $\Gamma(x):=\Gamma_1(x)$. Denote by $\mr_\nu F$ the
\emph{union of all cones with vertices in $F$}, namely,
$\mr_\nu F:=\cup_{x\in F}\Gamma_\nu(x)$.

The following Lemma \ref{l-tentad1} is just \cite[Lemma 3.1]{jyjfa10}.
\begin{lemma}\label{l-tentad1}
Let $\nu,\ \eta\in(0,\fz)$. Then there exist positive constants $\gamma\in(0,1)$
and $C$ such that, for any closed subset $F$ of $\rn$
whose complement has finite measure, and any nonnegative measurable
function $H$ on $\urn$,
$$\int_{\mr_\nu(F_\gamma^\ast)}H(y,t)t^n\,dydt
\le C \int_F\lf\{\int_{\Gamma_\eta}H(y,t)\,dydt\r\}\,dx,$$
where $F_\gamma^\ast$ denotes the set of points in $\rn$ with
the global $\gamma$-density with respect to $F$.
\end{lemma}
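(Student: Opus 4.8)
The plan is to turn this geometric statement into a single pointwise estimate by two applications of Tonelli's theorem. First I would rewrite the right-hand side: since for each $(y,t)\in\urn$ one has $\int_F\chi_{\Gamma_\eta(x)}(y,t)\,dx=|F\cap B(y,\eta t)|$, Tonelli gives
$$\int_F\lf\{\int_{\Gamma_\eta(x)}H(y,t)\,dy\,dt\r\}dx=\int_\urn H(y,t)\,|F\cap B(y,\eta t)|\,dy\,dt.$$
On the left-hand side, $\int_{\mr_\nu(F_\gamma^\ast)}H(y,t)t^n\,dy\,dt=\int_\urn H(y,t)\,t^n\,\chi_{\mr_\nu(F_\gamma^\ast)}(y,t)\,dy\,dt$. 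Hence, because $H\ge0$, once we know that there exist $\gamma\in(0,1)$ and $C\in(0,\fz)$ with
$$t^n\le C\,|F\cap B(y,\eta t)|\qquad\text{for all }(y,t)\in\mr_\nu(F_\gamma^\ast),$$
the lemma follows: we insert this bound in the left-hand integral and then drop the indicator $\chi_{\mr_\nu(F_\gamma^\ast)}\le1$, landing exactly on the right-hand side above with the same constant $C$.

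To establish the pointwise estimate, fix $(y,t)\in\mr_\nu(F_\gamma^\ast)$; by definition of $\mr_\nu$ and $\Gamma_\nu$ there is a point $x_0\in F_\gamma^\ast$ with $|x_0-y|<\nu t$. Set $\rho:=(\eta+\nu)t$, so that $B(y,\eta t)\subset B(x_0,\rho)$ by the triangle inequality. Since $x_0$ has the global $\gamma$-density with respect to $F$, applying the density inequality at radius $\rho$ yields $|F\cap B(x_0,\rho)|\ge\gamma|B(x_0,\rho)|$, and hence, writing $O:=F^\complement$ and $\omega_n:=|B(0,1)|$,
$$|O\cap B(x_0,\rho)|=|B(x_0,\rho)|-|F\cap B(x_0,\rho)|\le(1-\gamma)\omega_n(\eta+\nu)^n t^n.$$
Consequently
$$|F\cap B(y,\eta t)|=|B(y,\eta t)|-|O\cap B(y,\eta t)|\ge\omega_n\eta^n t^n-(1-\gamma)\omega_n(\eta+\nu)^n t^n=\omega_n t^n\lf[\eta^n-(1-\gamma)(\eta+\nu)^n\r].$$
Now choose $\gamma\in(0,1)$ so close to $1$ that $1-\gamma\le\frac{\eta^n}{2(\eta+\nu)^n}$; then $|F\cap B(y,\eta t)|\ge\frac{\omega_n}{2}\eta^n t^n$, i.e. the pointwise estimate holds with $C:=2/(\omega_n\eta^n)$. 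Combining this with the two Tonelli identities completes the argument.

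It remains only to record the measurability facts that make the interchanges of integration legitimate: $O_\gamma^\ast=\{x\in\rn:\ \cm(\chi_O)(x)>1-\gamma\}$ is open, so $F_\gamma^\ast$ is closed, and $\mr_\nu(F_\gamma^\ast)=\bigcup_{x\in F_\gamma^\ast}\Gamma_\nu(x)$ is a union of open sets in $\urn$, hence open; therefore $\chi_{\mr_\nu(F_\gamma^\ast)}$ is measurable, and with $H\ge0$ measurable both Tonelli applications are valid (with both sides allowed to be $+\fz$). The main obstacle is the aperture mismatch: the cones hidden in $\mr_\nu$ have aperture $\nu$, while the right-hand side uses aperture $\eta$, and the vertex $y$ of a point in $\mr_\nu(F_\gamma^\ast)$ need not itself lie in $F_\gamma^\ast$, so one cannot read off a density estimate centered at $y$. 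The device that overcomes this is to pass to the slightly larger ball $B(x_0,(\eta+\nu)t)$ centered at the density point $x_0$: it simultaneously contains $B(y,\eta t)$ and carries a controlled density of $F$, and the freedom to take $\gamma$ arbitrarily close to $1$ absorbs the geometric loss $(\eta+\nu)^n/\eta^n$.
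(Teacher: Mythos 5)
Your argument is correct: the two Tonelli reductions, the inclusion $B(y,\eta t)\subset B(x_0,(\eta+\nu)t)$ centered at a density point $x_0\in F_\gamma^\ast$, and the choice $1-\gamma\le\eta^n/[2(\eta+\nu)^n]$ give exactly the pointwise bound $t^n\ls|F\cap B(y,\eta t)|$ that the lemma needs. The paper gives no proof of its own (it quotes the result from \cite[Lemma 3.1]{jyjfa10}, going back to Coifman--Meyer--Stein), and your proof is precisely that standard argument, so there is nothing to add.
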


\begin{proof}[Proof of Theorem \ref{t-tentad}]
Assume that $f\in T^{p(\cdot)}_2(\urn)$. For any $k\in\zz$, we let
$$O_k:=\lf\{x\in\rn:\ \ca(f)(x)>2^k\r\}$$
and $F_k:=O_k^\complement$. Since $f\in T^{p(\cdot)}_2(\urn)$, for
each $k$, $O_k$ is an open set of $\rn$ and $|O_k|<\fz$.
Let $\gamma\in(0,1)$ be as in Lemma \ref{l-tentad1} with $\eta=1=\nu$.
In what follows, we denote $(F_k)_\gamma^\ast$ and $(O_k)_\gamma^\ast$ simply by
$F_k^\ast$ and $O_k^\ast$.
By the proof of \cite[Theorem 3.2]{hyy}, we know that
${\rm supp}\,f\subset(\cup_{k\in\zz}\wh{O_k^\ast}\cup E)$, where $E\subset\urn$
satisfies that $\int_E\frac{dy\,dt}{t}=0$.

For each $k\in\zz$, considering the Whitney decomposition of the
open set of $O_k^\ast$,
we obtain a set $I_k$ of indices and
a family $\{Q_{k,j}\}_{j\in I_k}$ of closed cubes
 with disjoint interiors such that

(i) $\cup_{j\in I_k}Q_{k,j}=O_k^\ast$ and, if $i\neq j$, then
$\mathring Q_{k,j}\cap \mathring Q_{k,i}=\emptyset$, where $\mathring E$ denotes the
\emph{interior} of the set $E$;

(ii) $\sqrt n\ell(Q_{k,j})\le{\rm dist}(Q_{k,j},(O_k^\ast)^\complement)
\le 4\sqrt n\ell(Q_{k,j})$, where $\ell(Q_{k,j})$ denotes the \emph{side-length}
of $Q_{k,j}$ and ${\rm dist}(Q_{k,j},(O_k^\ast)^\complement)
:=\inf\{|z-w|:\ z\in Q_{k,j},\ w\in(O_k^\ast)^\complement\}$.

Now, for each $j\in I_k$, let $R_{k,j}$ be the \emph{cube with the same center as
$Q_{k,j}$
and with the radius ${11\sqrt n}/2$-times $\ell(Q_{k,j})$}. Set
$$A_{k,j}:=\wh{R_{k,j}}\cap(Q_{k,j}\times(0,\fz))
\cap(\wh{O_k^\ast}\backslash\wh{O_{k+1}^\ast}),$$
$$a_{k,j}:=2^{-k}\|\chi_{R_{k,j}}\|_{\vlp}^{-1}f\chi_{A_{k,j}}$$
and $\lz_{k,j}:=2^{k}\|\chi_{R_{k,j}}\|_{\vlp}$.
Notice that $(Q_{k,j}\times(0,\fz))
\cap(\wh{O_k^\ast}\backslash\wh{O_{k+1}^\ast})\subset\wh{R_{k,j}}$.
From this and ${\rm supp}\,f\subset(\cup_{k\in\zz}\wh{O_k^\ast}\cup E)$,
we deduce that
$f=\sum_{k\in\zz}\sum_{j\in I_k}\lz_{k,j}a_{k,j}$
almost everywhere on $\urn$.

Next we first show that, for each $k\in\zz$ and $j\in I_k$, $a_{k,j}$ is
a $(p(\cdot),\fz)$-atom support in $\wh{R_{k,j}}$.
Let $p\in(1,\fz)$ and $h\in T_2^{p'}(\urn)$ with $\|h\|_{T_2^{p'}(\urn)}\le1$.
Since $A_{k,j}\subset (\wh{O_{k+1}^\ast})^\complement=\mr_1(F^\ast_{k+1})$,
by Lemma \ref{l-tentad1} and the H\"older inequality, we have
\begin{eqnarray*}
|\langle a_{k,j},h\rangle|
&&:=\lf|\int_{\urn}a_{k,j}(y,t)\chi_{A_{k,j}}(y,t)h(y,t)\,\frac{dy\,dt}{t}\r|\\
&&\ls\int_{F_{k+1}}\int_{\Gamma(x)}|a_{k,j}(y,t)h(y,t)|\,\frac{dy\,dt}{t^{n+1}}dx
\ls\int_{F_{k+1}}\ca(a_{k,j})(x)\ca(h)(x)\,dx\\
&&\ls2^{-k}\|\chi_{R_{k,j}}\|_{\vlp}^{-1}
\lf\{\int_{(3R_{k,j})\cap F_{k+1}}
[\ca(f)(x)]^p\,dx\r\}^{1/p}\|h\|_{T_2^{p'}(\urn)}\\
&&\ls|R_{k,j}|^{1/p}\|\chi_{R_{k,j}}\|_{\vlp}^{-1},
\end{eqnarray*}
which, together with $(T_2^p(\urn))^\ast=T_2^{p'}(\urn)$ (see \cite{cms85}),
where $(T_2^p(\urn))^\ast$ denotes the \emph{dual space} of $T_2^{p}(\urn)$,
implies that $\|a_{k,j}\|_{T_2^{p}(\urn)}
\ls|R_{k,j}|^{1/p}\|\chi_{R_{k,j}}\|_{\vlp}^{-1}$.
Thus, $a_{k,j}$ is a $(p(\cdot),p)$-atom support
in $\wh{R_{k,j}}$ up to a harmless
constant for all $p\in(1,\fz)$ and hence a $(p(\cdot),\fz)$-atom up to
a harmless constant.

Finally, we prove that $\ca^\ast(\{\lz_j\}_{j},\{Q_j\}_{j})
\ls\|f\|_{T^{p(\cdot)}_2(\urn)}$.
By the fact that $\chi_{R_{k,j}}\ls\cm(\chi_{Q_{k,j}}^r)$ for any $r\in(0,p_-)$,
we know that
\begin{eqnarray*}
&&\ca^\ast(\{\lz_{k,j}\},\{R_{k,j}\})\\
&&\hs\le\lf\|\lf\{\sum_{k\in\zz}\sum_{j\in I_k}
\frac{|\lz_{k,j}|^{p_-}\chi_{R_{k,j}}}{\|\chi_{R_{k,j}}\|
_{\vlp}^{p_-}}\r\}^{\frac 1{p_-}}\r\|_{\vlp}
=\lf\|\lf\{\sum_{k\in\zz}\sum_{j\in I_k}\lf(2^k\chi_{R_{k,j}}
\r)^{p_-}\r\}^{\frac1{p_-}}\r\|_{\vlp}\\
&&\hs\ls\lf\|\lf\{\sum_{k\in\zz}\sum_{j\in I_k}
\lf[\cm(2^{kr}\chi_{Q_{k,j}}^r)(x)\r]^{\frac{p_-}r}\r\}^{1/p_-}\r\|_{\vlp},
\end{eqnarray*}
which, together with Lemma \ref{l-hlmo} and the
Whitney decomposition of $O_k^\ast$,
implies that
\begin{eqnarray*}
\ca^\ast(\{\lz_{k,j}\},\{R_{k,j}\})
\ls\lf\|\lf\{\sum_{k\in\zz}\sum_{j\in I_k}\lf(2^k
\chi_{Q_{k,j}}\r)^{p_-}\r\}^{1/p_-}
\r\|_{\vlp}\!\sim\lf\|\lf\{\sum_{k\in\zz}\lf(2^k\chi_{O_k^\ast}\r)^{p_-}\r\}^{1/p_-}
\r\|_{\vlp}.
\end{eqnarray*}
From the fact that $\chi_{O_k^\ast}\ls\cm(\chi_{O_k}^r)$ with $r\in(0,p_-)$
and Lemma \ref{l-hlmo} again, we further deduce that
\begin{eqnarray*}
&&\ca^\ast(\{\lz_{k,j}\},\{R_{k,j}\})\\
&&\hs\ls\lf\|\lf\{\sum_{k\in\zz}\lf(\cm(2^{kr}
\chi_{O_k}^r)\r)^{p_-/r}\r\}^{1/p_-}
\r\|_{\vlp}
\ls\lf\|\lf\{\sum_{k\in\zz}\lf(2^k\chi_{O_k}\r)^{p_-}\r\}^{1/p_-}
\r\|_{\vlp}\\
&&\hs\sim\lf\|\lf\{\sum_{k\in\zz}\lf(2^k
\chi_{O_k\backslash O_{k+1}}\r)^{p_-}\r\}^{1/p_-}
\r\|_{\vlp}
\sim\|\ca(f)\|_{\vlp}\sim\|f\|_{T^{p(\cdot)}_2(\urn)},
\end{eqnarray*}
which completes the proof of Theorem \ref{t-tentad}.
\end{proof}

To prove Theorem \ref{t-cm1}, we also need following technical lemmas.
\begin{lemma}\label{l-cm3}
Let $Q:=Q(x_0,\delta)\subset\rn$, $\vez\in(n(1/{p_-}-1),\fz)$,
$p(\cdot)\in\cp(\rn)$ satisfy \eqref{ve1} and \eqref{ve2}, and
$s\in( n/{p_-}-n-1,\fz)\cap\zz_+$.
Then there exists a positive constant $C$ such that, for all $f\in\cpss$,
$$\int_\rn\frac{\delta^\vez|f(x)-P_{Q}^sf(x)|}
{\delta^{n+\vez}+|x-x_0|^{n+\delta}}\,dx\le C
\frac{\|\chi_{Q}\|_{\vlp}}{|Q|}\|f\|_{\cpss}.$$
\end{lemma}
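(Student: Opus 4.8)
The plan is to split $\rn$ into the cube $Q$ together with the dyadic annuli $A_j:=2^jQ\setminus2^{j-1}Q$, $j\in\nn$, to replace the kernel by its size on each piece, and to reduce everything to an $L^1$-estimate of $f-P_Q^sf$ over $2^jQ$. Since $|x-x_0|\sim2^j\dz$ for $x\in A_j$, one has $\frac{\dz^\vez}{\dz^{n+\vez}+|x-x_0|^{n+\vez}}\ls2^{-j(n+\vez)}\dz^{-n}$ on $A_j$, while the kernel is $\ls\dz^{-n}$ on $Q$; recalling $\dz^{-n}\sim|Q|^{-1}$, it therefore suffices to prove that $\int_{2^jQ}|f(x)-P_Q^sf(x)|\,dx\ls2^{jn/p_-}\|\chi_Q\|_{\vlp}\|f\|_{\cpss}$ for all $j\in\zz_+$, and then to sum the resulting series $\sum_{j\ge0}2^{j(n/p_--n-\vez)}$, which converges precisely because $\vez>n(1/p_--1)=n/p_--n$.

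To establish this $L^1$-estimate I would use $|f-P_Q^sf|\le|f-P_{2^jQ}^sf|+|P_{2^jQ}^sf-P_Q^sf|$. For the first summand, the definition of $\|\cdot\|_{\cpss}$ gives $\int_{2^jQ}|f-P_{2^jQ}^sf|\,dx\le\|\chi_{2^jQ}\|_{\vlp}\|f\|_{\cpss}$, and Lemma \ref{l-bigsball} bounds $\|\chi_{2^jQ}\|_{\vlp}\ls2^{jn/p_-}\|\chi_Q\|_{\vlp}$. For the second summand, write the telescoping sum $P_{2^jQ}^sf-P_Q^sf=\sum_{i=1}^j(P_{2^iQ}^sf-P_{2^{i-1}Q}^sf)$; since $P_{2^{i-1}Q}^s$ fixes polynomials of degree at most $s$, one has $P_{2^iQ}^sf-P_{2^{i-1}Q}^sf=P_{2^{i-1}Q}^s(P_{2^iQ}^sf-f)$, so Lemma \ref{l-cm1} combined with the definition of $\|\cdot\|_{\cpss}$ yields $\sup_{2^{i-1}Q}|P_{2^iQ}^sf-P_{2^{i-1}Q}^sf|\ls\frac{\|\chi_{2^iQ}\|_{\vlp}}{|2^iQ|}\|f\|_{\cpss}$. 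Because each $P_{2^iQ}^sf-P_{2^{i-1}Q}^sf$ is a polynomial of degree at most $s$, passing from $2^{i-1}Q$ to $2^jQ$ costs at most a factor $C2^{(j-i)s}$; inserting $\|\chi_{2^iQ}\|_{\vlp}\ls2^{in/p_-}\|\chi_Q\|_{\vlp}$ (Lemma \ref{l-bigsball}) and summing over $i$ gives $\int_{2^jQ}|P_{2^jQ}^sf-P_Q^sf|\,dx\ls2^{jn}|Q|\,\sup_{2^jQ}|P_{2^jQ}^sf-P_Q^sf|\ls2^{jn/p_-}\|\chi_Q\|_{\vlp}\|f\|_{\cpss}$, where the geometric sum in $i$ closes up because the constraint $s>n/p_--n-1$ forces $n(1/p_--1)$ to be the dominant exponent.

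Combining the two parts yields $\int_{2^jQ}|f-P_Q^sf|\,dx\ls2^{jn/p_-}\|\chi_Q\|_{\vlp}\|f\|_{\cpss}$, and hence $\int_\rn\frac{\dz^\vez|f-P_Q^sf|}{\dz^{n+\vez}+|x-x_0|^{n+\vez}}\,dx\ls\frac{\|\chi_Q\|_{\vlp}}{|Q|}\|f\|_{\cpss}\sum_{j\ge0}2^{j(n/p_--n-\vez)}\ls\frac{\|\chi_Q\|_{\vlp}}{|Q|}\|f\|_{\cpss}$, which is the assertion. I expect the main obstacle to be the second summand above, namely quantifying how far the best polynomial approximants $P_{2^jQ}^sf$ drift from $P_Q^sf$ as the cube dilates: the telescoping argument handles it, but one must carefully balance the factor $2^{(j-i)s}$ coming from the rescaling of degree-$s$ polynomials against the factor $2^{in/p_-}$ coming from the growth of $\|\chi_{2^iQ}\|_{\vlp}$, and it is exactly here that the admissible range of $s$ — equivalently, the position of $\vez$ relative to $n(1/p_--1)$ — must be exploited so that the growth exponent of $\int_{2^jQ}|f-P_Q^sf|$ stays below $n+\vez$. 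The kernel-size estimates on the annuli and the final geometric summation are then routine.
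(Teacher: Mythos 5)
Your overall architecture coincides with the paper's: the dyadic annuli, the kernel bounds on each piece, the reduction to an $L^1$-estimate of $f-P_Q^sf$ over $2^jQ$, the treatment of the term $f-P_{2^jQ}^sf$ via the Campanato norm and Lemma \ref{l-bigsball}, and the final geometric series using $\vez>n(1/p_--1)$ are all exactly as in the paper. The gap is in your estimate of the polynomial drift $P_{2^jQ}^sf-P_Q^sf$. Your telescoping with the degree-$s$ rescaling factor gives $\sup_{2^jQ}|P_{2^jQ}^sf-P_Q^sf|\ls\sum_{i=1}^j2^{(j-i)s}2^{in(1/p_--1)}\frac{\|\chi_Q\|_{\vlp}}{|Q|}\|f\|_{\cpss}$, and you assert that this sum is dominated by its $i=j$ term because $s>n/p_--n-1$. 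That reads the hypothesis backwards: it is a \emph{lower} bound on $s$, so nothing prevents $s\ge n(1/p_--1)$ (for instance $p_-=1$ and $s\ge1$), and in that regime the sum is of size $2^{js}$ (with an extra factor $j$ in the borderline case), not $2^{jn(1/p_--1)}$. With your own estimates you then only obtain $\int_{2^jQ}|f-P_Q^sf|\,dx\ls 2^{j\max\{n/p_-,\,n+s\}}\|\chi_Q\|_{\vlp}\|f\|_{\cpss}$ (up to a factor $j$), so the claimed intermediate bound with exponent $n/p_-$ is not established, and the final series becomes $\sum_j2^{j(s-\vez)}$ in the bad regime, which converges only under the extra assumption $\vez>s$ that the lemma does not make.

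The paper does not telescope at this point: it invokes Lemma \ref{l-cm2} (quoted from [NS12, Lemma 6.5]), which asserts precisely the growth $\frac1{|2^kQ|}\int_{2^kQ}|f(x)-P_Q^sf(x)|\,dx\ls2^{kn(1/p_--1)}\frac{\|\chi_Q\|_{\vlp}}{|Q|}\|f\|_{\cpss}$ for the average over the dilated cube, and then uses Lemma \ref{l-cm1} to transfer this bound pointwise to $|P_{2^kQ}^sf-P_Q^sf|$ on $2^kQ$, with no $2^{(j-i)s}$ rescaling loss. That averaged estimate over $2^kQ$ is exactly the ingredient your elementary dilation argument for degree-$s$ polynomials does not reproduce; to close the gap you should either quote Lemma \ref{l-cm2} at this step, as the paper does, or supply a proof of an estimate of that strength, rather than trying to derive it from the sup-norm growth of the individual polynomial differences, where the exponent $s$ inevitably enters.
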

To prove Lemma \ref{l-cm3}, we need the following Lemma \ref{l-cm2} which was
proved in \cite[Lemma 6.5]{ns12}.

\begin{lemma}\label{l-cm2}
Let $p(\cdot)\in\cp(\rn)$ and $q\in[1,\fz]$. Assume that $p(\cdot)$ satisfies
\eqref{ve1} and \eqref{ve2}, and $s\in( n/{p_-}-n-1,\fz)\cap\zz_+$.
Then there exists a positive constant $C$ such that,
for all $Q\in\cq$, $j\in\zz$ and $f\in\cps$,
$$\lf\{\frac1{|2^j Q|}\int_{2^jQ}\lf|f(x)-P_Q^sf(x)\r|^q\,dx\r\}^{1/q}
\le C2^{jn(\frac 1{p_-}-1)}\frac{\|\chi_Q\|_{\vlp}}{|Q|}\|f\|_{\cps},$$
where $2^jQ$ denotes the cube with the same center as $Q$ but $2^j$ times
side-length of $Q$.
\end{lemma}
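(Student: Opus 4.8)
The plan is to decompose $\rn$ into the dyadic dilates of $Q$ and to control the contribution of each annulus by combining the pointwise decay of the kernel $\delta^\vez/(\delta^{n+\vez}+|x-x_0|^{n+\vez})$ away from $Q$ with the controlled growth of the average oscillation $\frac1{|2^jQ|}\int_{2^jQ}|f-P_Q^sf|$ supplied by Lemma \ref{l-cm2}. Since $Q=Q(x_0,\delta)$ satisfies $|Q|\sim\delta^n$, I first record two elementary kernel bounds: for $x\in 2Q$ one has $\frac{\delta^\vez}{\delta^{n+\vez}+|x-x_0|^{n+\vez}}\ls\delta^{-n}$, while for $j\in\nn$ and $x\in 2^{j+1}Q\setminus 2^jQ$ one has $|x-x_0|\gs 2^j\delta$, hence $\frac{\delta^\vez}{\delta^{n+\vez}+|x-x_0|^{n+\vez}}\ls 2^{-j(n+\vez)}\delta^{-n}$.

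Then I would split
$$\int_\rn\frac{\delta^\vez|f(x)-P_Q^sf(x)|}{\delta^{n+\vez}+|x-x_0|^{n+\vez}}\,dx
=\int_{2Q}(\cdots)+\sum_{j=1}^\fz\int_{2^{j+1}Q\setminus 2^jQ}(\cdots)=:{\rm I}+\sum_{j=1}^\fz{\rm II}_j.$$
For ${\rm I}$, the first kernel bound together with Lemma \ref{l-cm2} (applied with its exponent $q=1$ and its dilation index equal to $1$, so that $2^jQ=2Q$) gives
$${\rm I}\ls\delta^{-n}\int_{2Q}|f(x)-P_Q^sf(x)|\,dx
\ls\delta^{-n}|2Q|\,2^{n(1/p_--1)}\frac{\|\chi_Q\|_{\vlp}}{|Q|}\|f\|_{\cpss}
\ls\frac{\|\chi_Q\|_{\vlp}}{|Q|}\|f\|_{\cpss}.$$
For ${\rm II}_j$ with $j\in\nn$, the second kernel bound and Lemma \ref{l-cm2} with dilation index $j+1$ yield
$${\rm II}_j\ls 2^{-j(n+\vez)}\delta^{-n}\int_{2^{j+1}Q}|f(x)-P_Q^sf(x)|\,dx
\ls 2^{-j(n+\vez)}\delta^{-n}|2^{j+1}Q|\,2^{(j+1)n(1/p_--1)}\frac{\|\chi_Q\|_{\vlp}}{|Q|}\|f\|_{\cpss}.$$
Since $\delta^{-n}|2^{j+1}Q|\sim 2^{jn}$, collecting the powers of $2^j$ gives ${\rm II}_j\ls 2^{-j[\vez-n(1/p_--1)]}\,\frac{\|\chi_Q\|_{\vlp}}{|Q|}\|f\|_{\cpss}$.

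Finally, because the hypothesis $\vez\in(n(1/p_--1),\fz)$ makes the exponent $\vez-n(1/p_--1)$ strictly positive, the geometric series $\sum_{j=1}^\fz 2^{-j[\vez-n(1/p_--1)]}$ converges, and summing the bounds for ${\rm I}$ and the ${\rm II}_j$ produces the claimed estimate. The proof is essentially bookkeeping once Lemma \ref{l-cm2} is invoked, so the only genuine (and mild) obstacle is making sure the two sources of growth in $j$ — the volume factor $2^{jn}$ from $|2^{j+1}Q|$ and the factor $2^{jn(1/p_--1)}$ from Lemma \ref{l-cm2} — are strictly dominated by the kernel decay $2^{-j(n+\vez)}$; this is precisely what the assumed lower bound on $\vez$ delivers, and the condition $s\in(n/p_--n-1,\fz)\cap\zz_+$ is needed only so that $P_Q^sf$ is well defined and Lemma \ref{l-cm2} applies.
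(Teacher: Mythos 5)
There is a genuine gap, and it is not a technical one: you have not proved the statement you were asked to prove. The statement is Lemma \ref{l-cm2} itself, namely the bound
$\{\frac1{|2^jQ|}\int_{2^jQ}|f-P_Q^sf|^q\,dx\}^{1/q}\le C\,2^{jn(1/p_--1)}\frac{\|\chi_Q\|_{\vlp}}{|Q|}\|f\|_{\cps}$
for all dilation indices $j$. What you wrote is a proof of the kernel-integral estimate of Lemma \ref{l-cm3} (the bound on $\int_\rn\frac{\delta^\vez|f-P_Q^sf|}{\delta^{n+\vez}+|x-x_0|^{n+\vez}}\,dx$), and at every step you invoke Lemma \ref{l-cm2} as a known ingredient (``together with Lemma \ref{l-cm2} applied with $q=1$ and dilation index $j+1$''). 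As an argument for Lemma \ref{l-cm2} this is circular: the growth factor $2^{jn(1/p_--1)}$, which is the entire content of the lemma, is simply assumed. Note that the paper does not prove Lemma \ref{l-cm2} either; it quotes it from Nakai--Sawano \cite[Lemma 6.5]{ns12}, so nothing in the surrounding text can be read as supplying the missing argument for you.

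What an actual proof of Lemma \ref{l-cm2} has to deliver, and what is absent from your proposal, is the comparison between the projection polynomial $P_Q^sf$ fitted on the small cube and the oscillation of $f$ over the large cube $2^jQ$. Concretely, for $j\ge0$ one writes $|f-P_Q^sf|\le|f-P_{2^jQ}^sf|+|P_{2^jQ}^sf-P_Q^sf|$, bounds the first term directly by the Campanato norm on $2^jQ$, and controls the second by telescoping $P_{2^{i+1}Q}^sf-P_{2^iQ}^sf=P_{2^{i+1}Q}^s(f-P_{2^iQ}^sf)$ over $i=0,\dots,j-1$, using Lemma \ref{l-cm1} for each difference, a growth estimate for polynomials of degree at most $s$ when passing from $\sup_{2^{i+1}Q}$ to $\sup_{2^jQ}$, and the norm comparison $\|\chi_{2^iQ}\|_{\vlp}\ls(|2^iQ|/|Q|)^{1/p_-}\|\chi_Q\|_{\vlp}$ of Lemma \ref{l-bigsball}; the restriction $s\in(n/p_--n-1,\fz)$ is exactly what makes the resulting sum over scales produce the factor $2^{jn(1/p_--1)}$, so it is not merely ``so that $P_Q^sf$ is well defined,'' as you assert. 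None of this appears in your write-up. (As a proof of Lemma \ref{l-cm3} your argument is fine and in fact slightly cleaner than the paper's, since applying Lemma \ref{l-cm2} with $q=1$ directly to the annuli replaces the paper's extra triangle-inequality step through $P_{Q_k}^sf$; but that is not the lemma under review.)
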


\begin{proof}[Proof of Lemma \ref{l-cm3}]
For any $k\in\zz$, let $Q_k:=2^{k}Q$, namely, $Q_k$ has the same center with
$Q$ but with $2^k$ times side-length of $Q$. Then we have
\begin{eqnarray*}
{\rm I}
:=&&\int_\rn\frac{\delta^\vez|f(x)-P_{Q}^sf(x)|}
{\delta^{n+\vez}+|x-x_0|^{n+\delta}}\,dx\\
=&&\lf(\int_Q+\sum_{k=0}^\fz\int_{Q_{k+1}\backslash Q_k}\r)
\frac{\delta^\vez|f(x)-P_{Q}^sf(x)|}
{\delta^{n+\vez}+|x-x_0|^{n+\delta}}\,dx\\
\ls&&\frac1{|Q|}\int_Q|f(x)-P_Q^sf(x)|\,dx
+\sum_{k=0}^\fz(2^k\delta)^{-n-\vez}\delta^\vez
\int_{Q_{k+1}}|f(x)-P_Q^sf(x)|\,dx\\
\ls&&\frac{\|\chi_Q\|_{\vlp}}{|Q|}\|f\|_{\cpss}\!+\!\sum_{k=1}^\fz\frac{2^{-k(n+\vez)}}{|Q|}
\int_{Q_k}\lf[|f(x)-P_{Q_k}^sf(x)|+|P_{Q_k}^sf(x)-P_Q^sf(x)|\r]\,dx.
\end{eqnarray*}
By Lemmas \ref{l-cm1} and \ref{l-cm2}, we find that, for all $x\in Q_k$,
\begin{eqnarray*}
|P_{Q_k}^sf(x)-P_Q^sf(x)|
&&=|P_{Q_k}^s(f-P_Q^sf)(x)|\ls\frac1{|Q_k|}\int_{Q_k}|f(x)-P_Q^sf(x)|\,dx\\
&&\ls2^{k(\frac n{p_-}-n)}\frac{\|\chi_Q\|_{\vlp}}{|Q|}\|f\|_{\cpss},
\end{eqnarray*}
which, together with $\vez\in(n(1/{p_-}-1),\fz)$, implies that
\begin{eqnarray*}
{\rm I}
&&\ls\frac{\|\chi_Q\|_{\vlp}}{|Q|}\|f\|_{\cpss}
+\sum_{k=1}^\fz2^{-k(\vez+n-\frac n{p_-})}
\frac{\|\chi_Q\|_{\vlp}}{|Q|}\|f\|_{\cpss}\\
&&\sim\frac{\|\chi_Q\|_{\vlp}}{|Q|}\|f\|_{\cpss}.
\end{eqnarray*}
This finishes the proof of Lemma \ref{l-cm3}.
\end{proof}
Next we establish a John-Nirenberg inequality for functions in $\cpss$.
\begin{lemma}\label{l-jnineq}
Let $p(\cdot)\in\cp(\rn)$ satisfy \eqref{ve1} and \eqref{ve2},  $f\in\cpss$ with
$s\in( n/{p_-}-n-1,\fz)\cap\zz_+$.
Assume that $p_+\in(0,1]$. Then there exist positive constants
$c_1$ and $c_2$, independent of $f$,
such that, for all cubes $Q\subset\rn$ and $\lz\in(0,\fz)$,
$$|\{x\in Q:\ |f(x)-P_Q^sf(x)|>\lz\}|
\le c_1{\rm exp}\lf\{-\frac{c_2|Q|\lz}
{\|f\|_{\cpss}\|\chi_Q\|_{\vlp}}\r\}|Q|.$$
\end{lemma}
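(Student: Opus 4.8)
The plan is to run the classical Calder\'on--Zygmund/good-$\lz$ iteration underlying the John--Nirenberg inequality, the only new ingredient being a careful treatment of the $Q$-dependence of the oscillation bound. From the definition of $\cpss$ with $q=1$ we have, for every cube $Q$,
$$\frac1{|Q|}\int_Q|f(x)-P_Q^sf(x)|\,dx\le\frac{\|\chi_Q\|_{\vlp}}{|Q|}\,\|f\|_{\cpss}.$$
By homogeneity (the map $f\mapsto P_Q^sf$ is linear, so $P_Q^s(cf)=cP_Q^sf$) we may assume $\|f\|_{\cpss}=1$. Fix a cube $Q_0$ and set $A:=\|\chi_{Q_0}\|_{\vlp}/|Q_0|$. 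The key observation is a uniform version of the above: by Lemma \ref{l-bigsball} together with the hypothesis $p_+\in(0,1]$ (so that the exponent $1/p_+-1$ is nonnegative and $(|Q|/|Q_0|)^{1/p_+-1}\le1$ whenever $Q\subset Q_0$), one gets $\|\chi_Q\|_{\vlp}/|Q|\le CA$ for every subcube $Q\subset Q_0$, and hence
$$\frac1{|Q|}\int_Q|f(x)-P_Q^sf(x)|\,dx\le CA\qquad\text{for all }Q\subset Q_0.$$
This is the step where $p_+\le1$ is genuinely used: it guarantees that a single Calder\'on--Zygmund height will serve for every subcube encountered in the iteration.

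Introduce $\Psi(\lz):=\sup_{Q\subset Q_0}\frac1{|Q|}\lf|\{x\in Q:\ |f(x)-P_Q^sf(x)|>\lz\}\r|$, which satisfies $\Psi(\lz)\le1$ for all $\lz>0$. Next I would establish a self-improving inequality for $\Psi$. Fix a large parameter $\bz$. Given $Q\subset Q_0$, since $\frac1{|Q|}\int_Q|f-P_Q^sf|\,dx\le CA<\bz A$ once $\bz>C$, perform a Calder\'on--Zygmund decomposition of $|f-P_Q^sf|$ on $Q$ at height $\bz A$: this yields pairwise disjoint subcubes $\{Q_j\}\subset Q$ with $\bz A<\frac1{|Q_j|}\int_{Q_j}|f-P_Q^sf|\,dx\le2^n\bz A$, with $|f-P_Q^sf|\le\bz A$ almost everywhere on $Q\setminus\bigcup_jQ_j$, and with $\sum_j|Q_j|\le\frac1{\bz A}\int_Q|f-P_Q^sf|\,dx\le\frac C\bz|Q|$. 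Since $P_{Q_j}^s$ fixes polynomials of degree at most $s$ (immediate from the defining property of $P_Q^s$), we have $P_{Q_j}^sf-P_Q^sf=P_{Q_j}^s(f-P_Q^sf)$, so Lemma \ref{l-cm1} gives $\sup_{Q_j}|P_{Q_j}^sf-P_Q^sf|\le\frac{C_1}{|Q_j|}\int_{Q_j}|f-P_Q^sf|\,dx\le C_12^n\bz A$. Consequently, for $\lz>(C_12^n+1)\bz A$,
$$\{x\in Q:\ |f-P_Q^sf|>\lz\}\subset\bigcup_j\lf\{x\in Q_j:\ |f-P_{Q_j}^sf|>\lz-C_12^n\bz A\r\},$$
because on $Q\setminus\bigcup_jQ_j$ the left-hand inequality fails. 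Estimating the measure and taking the supremum over $Q\subset Q_0$ yields $\Psi(\lz)\le\frac C\bz\,\Psi(\lz-C_12^n\bz A)$ for all $\lz>C_4A$, where $C_4:=(C_12^n+1)\bz$.

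Finally, fix $\bz$ so large that $C/\bz\le1/2$, and set $b:=C_12^n\bz A$. For $\lz>C_4A$, apply the recursion $m:=\lfz(\lz-C_4A)/b\rfz$ times and use $\Psi\le1$ to obtain $\Psi(\lz)\le2^{-m}\ls e^{-c_2\lz/A}$, with $c_2>0$ depending only on the constants above; for $0<\lz\le C_4A$ the bound $\Psi(\lz)\le1\le c_1e^{-c_2\lz/A}$ holds after enlarging $c_1$. Undoing the normalization $\|f\|_{\cpss}=1$ and recalling $A=\|\chi_{Q_0}\|_{\vlp}/|Q_0|$ converts $\Psi(\lz)\le c_1e^{-c_2\lz/A}$ into
$$\lf|\{x\in Q_0:\ |f(x)-P_{Q_0}^sf(x)|>\lz\}\r|\le c_1\exp\lf\{-\frac{c_2|Q_0|\lz}{\|f\|_{\cpss}\|\chi_{Q_0}\|_{\vlp}}\r\}|Q_0|,$$
which is the assertion of Lemma \ref{l-jnineq} since $Q_0$ was arbitrary. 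The only real obstacle is the one discussed above, namely controlling the oscillation bound uniformly over all subcubes of $Q_0$ despite its $Q$-dependence; once Lemma \ref{l-bigsball} and $p_+\le1$ provide that, the rest is the standard John--Nirenberg iteration, and neither Lemma \ref{l-cm2} nor the Hardy--Littlewood maximal operator is needed.
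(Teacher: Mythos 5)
Your proposal is correct and follows essentially the same route as the paper's proof: normalize, use Lemma \ref{l-bigsball} together with $p_+\le 1$ to get a uniform oscillation bound over all subcubes, run a Calder\'on--Zygmund decomposition at a fixed height, control the polynomial difference $P_{R_k}^sf-P_R^sf$ via Lemma \ref{l-cm1}, and iterate the resulting self-improving inequality for the supremum functional ($\Psi$ in your notation, $\cf$ in the paper's). The only differences are cosmetic (the paper normalizes $\|f\|_{\cpss}\|\chi_Q\|_{\vlp}=|Q|$ rather than $\|f\|_{\cpss}=1$), and your observation that Lemma \ref{l-cm2} is not needed here is consistent with the paper.
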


\begin{proof}
Let $f\in\cpss$ and a cube $Q\subset\rn$. Without loss of generality,
we may assume that $\|f\|_{\cpss}\|\chi_Q\|_{\vlp}=|Q|$.
Otherwise, we replace $f$ by
${f|Q|}/{[\|f\|_{\cpss}\|\chi_Q\|_{\vlp}]}.$
Thus, to show the conclusion of Lemma \ref{l-jnineq}, it suffices to show that
\begin{equation}\label{jnineq-3}
|\{x\in Q:\ |f(x)-P_Q^sf(x)|>\lz\}|
\le c_1{\rm exp}\lf\{-c_2\lz\r\}|Q|.
\end{equation}

For any $\lz\in(0,\fz)$ and cube $R\subset Q$,
let
${\rm I}(\lz, R):=|\{x\in R:\ |f(x)-P_R^sf(x)|>\lz\}|$
and
\begin{equation}\label{jnineq-4}
\cf(\lz,Q):=\sup_{R\subset Q}\frac{{\rm I}(\lz, R)}{|R|}.
\end{equation}
Then it is easy to see that $\cf(\lz,Q)\le1$.
From Lemma \ref{l-bigsball}, $\|f\|_{\cpss}\|\chi_Q\|_{\vlp}=|Q|$
and $p_+\in(0,1]$, we deduce that there exist a positive constant
$c_0$ such that,
for any cube $R\subset Q$,
$$\frac1{R}\int_R|f(x)-P_R^sf(x)|\,dx
\le\frac{\|\chi_R\|_{\vlp}}{|R|}\|f\|_{\cpss}\le c_0.$$

Applying the Calder\'on-Zygmund decomposition of $|f-P_R^sf|$ at
height $\sigma\in(c_0,\fz)$ on the cube $R$, there exists a family $\{R_k\}_k$ of cubes
of $R$ such that
$|f(x)-P_R^sf(x)|\le \sigma$ for almost every $x\in R\backslash(\cup_kR_k)$,
$R_k\cap R_j=\emptyset$ if $k\neq j$
and, for all $k$,
$\sigma<\int_{R_k}|f(x)-P_R^sf(x)|\,dx/{|R_k|}\le 2^n\sigma$.
From this, we deduce that
\begin{equation}\label{jnineq-1}
\sum_{k}|R_k|\le\frac1{\sigma}\sum_k\int_{R_k}|f(x)-P_R^sf(x)|\,dx
\le\frac1{\sigma}\int_R|f(x)-P_R^sf(x)|\,dx\le\frac{c_0}{\sigma}|R|.
\end{equation}
If $\lz\in(\sigma,\fz)$, then, for almost every $x\in R\backslash(\cup_kR_k)$,
$|f(x)-P_R^sf(x)|\le\sigma<\lz$ and hence
\begin{eqnarray}\label{jnineq-2}
{\rm I}(\lz,R)
&&\le\sum_{k}|\{x\in R_k:\ |f(x)-P_R^sf(x)|>\lz\}|\\
&&\le\sum_k{\rm I}(\lz-\eta,R_k)+
\sum_k|\{x\in R_k:\ |P_{R_k}^sf(x)-P_R^sf(x)|>\eta\}|=:{\rm I}_1+{\rm I}_2,\noz
\end{eqnarray}
where $\eta\in(0,\lz)$ is determined later. For I$_1$, by \eqref{jnineq-4} and
\eqref{jnineq-1}, we have
\begin{eqnarray}\label{jnineq-5}
{\rm I}_1\le\sum_k\cf(\lz-\eta,Q)|R_k|\le\frac{C_0}\sigma\cf(\lz-\eta,Q)|R|.
\end{eqnarray}
For I$_2$, by Lemma \ref{l-cm1}, we find that there exists a positive
constant $C_1$ such that, for any $x\in R_k$,
\begin{equation*}
|P_{R_k}^sf(x)-P_{R}^sf(x)|=|P_{R_k}^s(f-P_R^sf)(x)|
\le\frac{C_1}{|R_k|}\int_{R_k}|f(x)-P_R^sf(x)|\,dx
\le 2^nC_1\sigma.
\end{equation*}

Now, let $\sigma:=2c_0$ and $\eta=2^nC_1\sigma$.
Then, when $\lz\in(\eta,\fz)$, I$_2=0$, which, together with \eqref{jnineq-2}
and \eqref{jnineq-5},
implies that ${\rm I}(\lz,R)\le\cf(\lz-\eta,Q)|R|/2$ for all $R\subset Q$.
Thus, it follows that $\cf(\lz,Q)\le \cf(\lz-\eta,Q)/2$.
If $m\in\nn$ satisfies $m\eta<\lz\le(m+1)\eta$, then
$$\cf(\lz,Q)\le\frac12\cf(\lz-\eta,Q)\le\cdots\le\frac1{2^m}\cf(\lz-m\eta,Q).$$
From $\cf(\lz-m\eta,Q)\le 1$ and $m\ge\lz/\eta-1$, we deduce that
$$\cf(\lz,Q)\le 2^{-m}\le 2^{1-\lz/\eta}=2e^{(-\frac1\eta\log2)\lz}.$$
Therefore, when $\lz\in(\eta,\fz)$, we conclude that \eqref{jnineq-3}
holds true with
$c_1:=2$ and $c_2:=(\log2)/\eta$.
On the other hand, when $\lz\in(0,\eta)$, \eqref{jnineq-3} holds true trivially.
This finishes the proof of Lemma \ref{l-jnineq}.
\end{proof}
By the H\"older inequality and Lemma \ref{l-jnineq}, we immediately obtain
the following
Corollary \ref{c-jnineq}, the details being omitted.
\begin{corollary}\label{c-jnineq}
{ Let $p(\cdot)$, $s$ be as in Lemma \ref{l-jnineq} and $r\in(1,\fz)$. Then
$f\in\cpss$ if and only if $f\in \cl_{r,p(\cdot),s}(\rn)$.}
\end{corollary}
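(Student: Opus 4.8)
The plan is to deduce the corollary from the John--Nirenberg inequality of Lemma \ref{l-jnineq} together with Hölder's inequality, by the standard argument comparing the $L^1$-mean oscillation with the $L^r$-mean oscillation of a $\mathrm{BMO}$-type function.

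The implication $f\in\cl_{r,p(\cdot),s}(\rn)\Rightarrow f\in\cpss$ is the routine direction. I would first note that the projection polynomial $P_Q^s f$ of Definition \ref{d-cps}, being determined by the orthogonality conditions $\int_Q[f-P_Q^sf]h\,dx=0$ for all $h\in\cp_s(\rn)$, is the \emph{same} for every integrability exponent; hence Hölder's inequality gives, for each cube $Q\subset\rn$,
$$\frac1{|Q|}\int_Q|f(x)-P_Q^sf(x)|\,dx\le\lf\{\frac1{|Q|}\int_Q|f(x)-P_Q^sf(x)|^r\,dx\r\}^{1/r},$$
and multiplying by $|Q|/\|\chi_Q\|_{\vlp}$ and taking the supremum over all cubes yields $\|f\|_{\cpss}\le\|f\|_{\cl_{r,p(\cdot),s}(\rn)}$.

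For the converse I would fix $f\in\cpss$ and a cube $Q$, set $A_Q:=\|f\|_{\cpss}\|\chi_Q\|_{\vlp}/|Q|$ (assuming $A_Q>0$; if $\|f\|_{\cpss}=0$, then Lemma \ref{l-jnineq} forces $f$ to coincide almost everywhere with a polynomial of degree at most $s$ on every cube, so both norms vanish), and use the layer-cake representation
$$\int_Q|f(x)-P_Q^sf(x)|^r\,dx=r\int_0^\fz\lz^{r-1}\lf|\lf\{x\in Q:\ |f(x)-P_Q^sf(x)|>\lz\r\}\r|\,d\lz.$$
Inserting the bound from Lemma \ref{l-jnineq} and evaluating the resulting $\lz$-integral via the substitution $u=c_2\lz/A_Q$ (which produces the factor $c_2^{-r}\Gamma(r)A_Q^r$), one obtains
$$\lf\{\frac1{|Q|}\int_Q|f(x)-P_Q^sf(x)|^r\,dx\r\}^{1/r}\le C\,\frac{\|f\|_{\cpss}\|\chi_Q\|_{\vlp}}{|Q|},$$
where $C$ depends only on $r$, $c_1$ and $c_2$. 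Taking the supremum over all cubes then gives $\|f\|_{\cl_{r,p(\cdot),s}(\rn)}\le C\|f\|_{\cpss}<\fz$.

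There is no genuine obstacle here: the substantive work has already been absorbed into Lemma \ref{l-jnineq}, and what remains is bookkeeping. The two points that deserve a line each are the observation that $P_Q^s$ is common to all the scales $\cl_{q,p(\cdot),s}(\rn)$ (so that passing between exponents introduces no correction term), and the convergence of the $\lz$-integral, which is immediate from the exponential decay in Lemma \ref{l-jnineq}.
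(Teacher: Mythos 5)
Your argument is correct and is exactly the route the paper takes: the paper's proof of Corollary \ref{c-jnineq} is the one-line remark that it follows from the H\"older inequality (for the easy direction) and Lemma \ref{l-jnineq} via the layer-cake/distribution-function computation (for the converse), with the details omitted. Your write-up simply supplies those omitted details, including the correct observation that $P_Q^sf$ does not depend on the integrability exponent.
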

Now we prove Theorem \ref{t-cm1}.
\begin{proof}[Proof of Theorem \ref{t-cm1}]
We first prove (i). Let $b\in\cpss$. For any $Q_0:=Q(x_0,r)$, write
\begin{equation}\label{cm-z}
b=P_{2Q_0}^sb+(b-P_{2Q_0}^sb)\chi_{2Q_0}+
(b-P_{2Q_0}^sb)\chi_{\rn\backslash(2Q_0)}
=:b_1+b_2+b_3.
\end{equation}

For $b_1$, since $\int_\rn\phi(x)x^\gamma\,dx=0$ for all $\gamma\in\zz_+^n$,
we see that, for all $t\in(0,\fz)$, $\phi_t\ast b_1\equiv0$
and hence
\begin{equation}\label{cm-y}
\int_{\wh {Q_0}}|\phi_t\ast b_1(x)|^2\frac{dxdt}{t}=0.
\end{equation}

For $b_2$, by the fact that the boundedness of the square function
$g(f)$ on $L^2(\rn)$ (see, for example, \cite[p.\,356, Exercise 5.1.4]{gra08}),
we find that
\begin{eqnarray*}
\int_{\wh {Q_0}}|\phi_t\ast b_2(x)|^2\frac{dxdt}{t}
&&\le \int_{\urn}|\phi_t\ast b_2(x)|^2\frac{dxdt}{t}
\ls\|b_2\|_{L^2(\rn)}^2\sim\int_{2Q_0}|b(x)-P_{2Q_0}^sb(x)|^2\,dx,
\end{eqnarray*}
which, together with Lemma \ref{l-bigsball} and Corollary \ref{c-jnineq},
implies that
\begin{eqnarray}\label{cm-x}
\frac{|Q_0|^{1/2}}{\|\chi_{Q_0}\|_{\vlp}}
\lf\{\int_{\wh {Q_0}}|\phi_t\ast b_2(x)|^2\frac{dxdt}{t}\r\}^{1/2}
\ls\|b\|_{\cpss}.
\end{eqnarray}

For $b_3$, let $\vez$ be as in Lemma \ref{l-cm3}.
Then, for all $(x,t)\in \wh {Q_0}$, we have
\begin{eqnarray*}
|\phi_t\ast b_3(x)|
&&\ls\int_{\rn\backslash(2Q_0)}
\frac{t^\vez}{(t+|x-y|)^{n+\vez}}|b(y)-P_{2Q_0}^s(y)|\,dy\\
&&\ls\int_{\rn\backslash(2Q_0)}
\frac{t^\vez}{(t+|x_0-y|)^{n+\vez}}|b(y)-P_{2Q_0}^s(y)|\,dy\\
&&\ls\frac{t^\vez}{r^\vez}\frac{\|\chi_{Q_0}\|_{\vlp}}{|Q_0|}\|b\|_{\cpss},
\end{eqnarray*}
which implies that
\begin{eqnarray}\label{cm-x2}
\frac{|Q_0|}{\|\chi_{Q_0}\|_{\vlp}}\lf\{\frac1{|Q_0|}\int_{\wh {Q_0}}
|\phi_t\ast b_3(x)|^2\,\frac{dxdt}{t}\r\}^{1/2}
\ls\|b\|_{\cpss}.
\end{eqnarray}
From this, \eqref{cm-z}, \eqref{cm-y} and \eqref{cm-x}, we deduce that
\begin{equation*}
\frac{|Q_0|}{\|\chi_{Q_0}\|_{\vlp}}\lf\{\frac1{|Q_0|}\int_{\wh {Q_0}}
|\phi_t\ast b(x)|^2\,\frac{dxdt}{t}\r\}^{1/2}
\ls\|b\|_{\cpss},
\end{equation*}
which, together with the arbitrariness of $Q_0\subset\rn$,
implies that $d\mu$ is a $p(\cdot)$-Carleson measure on $\urn$ and
$\|d\mu\|_{p(\cdot)}\ls\|b\|_{\cpss}$.

Next, we prove (ii).
To this end, let $f\in L_{\rm comp}^{\fz,s}(\rn)$. Then, by $f\in L^\fz(\rn)$
with compact support,
$b\in L_{\rm loc}^2(\rn)$ and the Plancherel formula, we conclude that
\begin{equation}\label{cm-x1}
\lf|\int_\rn f(x)\overline{b(x)}\,dx\r|\sim\lf|\int_{\urn}
\phi_t\ast f(x)\overline{\phi_t\ast b(x)}\,
\frac{dxdt}{t}\r|.
\end{equation}
Moreover, from $f\in \vhs$ and Theorem \ref{t-equivalen},
we deduce that $\phi_t\ast f\in T^{p(\cdot)}_2(\urn)$, which, combined with
Theorem \ref{t-tentad}, implies that there exist $\{\lz_j\}_j\subset \cc$ and
a sequence $\{a_j\}_j$ of $(p(\cdot),\fz)$-atoms with
supp$a_j\subset \widehat{Q_j}$ such that
$\phi_t\ast f(x)=\sum_j\lz_ja_j(x,t)$ almost everywhere. By this, \eqref{cm-x1},
the H\"older inequality and Remark \ref{r-adp-1}, we find that
\begin{eqnarray*}
\lf|\int_\rn f(x)\overline{b(x)}\,dx\r|
&&\le \sum_j|\lz_j|\int_{\urn}|a_j(x,t)||\phi_t\ast b(x)|\,\frac{dxdt}{t}\\
&&\le \sum_j|\lz_j|\lf\{\int_{\wh {Q_j}}|a_j(x,t)|^2\r\}^{1/2}
\lf\{\int_{\wh{Q_j}}|\phi_t\ast b(x)|^2\,\frac{dxdt}{t}\r\}^{1/2}\\
&&\ls\sum_j|\lz_j|\frac{|Q_j|^{1/2}}{\|\chi_{Q_j}\|_{\vlp}}
\lf\{\int_{\wh{Q_j}}|\phi_t\ast b(x)|^2\,\frac{dxdt}{t}\r\}^{1/2}\\
&&\ls\|f\|_{\vhs}\|d\mu\|_{p(\cdot)}\ls\|d\mu\|_{p(\cdot)},
\end{eqnarray*}
which, together with \cite[Theorem 7.5]{ns12} and the fact that
$L_{\rm comp}^{\fz,s}(\rn)$ is dense in
$\vhs$, implies that $\|b\|_{\cpss}\ls\|d\mu\|_{p(\cdot)}$ and hence
completes the proof of Theorem \ref{t-cm1}.
\end{proof}
We conclude this section by giving the proof of Theorem \ref{t-cm2}.
\begin{proof}[Proof of Theorem \ref{t-cm2}]
From Theorem \ref{t-cm1}(ii) and the fact that, for all $(x,t)\in\rr_+^{n+1}$,
$|\phi_t\ast b(x)|\ls \wz A_{(\az,\ez),s}(b)(x,t)$ with $\phi$
as in Theorem \ref{t-cm1},
we deduce that the conclusion of Theorem \ref{t-cm2}(ii) holds true.

It therefore remains to prove (i).
Let $b\in\cl_{1,p(\cdot),s}(\rn)$. Then, for any cube $Q_0\subset\rn$, write
\begin{equation*}
b=P_{2Q_0}^sb+(b-P_{2Q_0}^sb)\chi_{2Q_0}+(b-P_{2Q_0}^sb)
\chi_{\rn\backslash(2Q_0)}
=:b_1+b_2+b_3.
\end{equation*}

For $b_1$, since $\int_\rn\phi(x)x^\gamma\,dx=0$ for
$\phi\in \ccc_{(\az,\ez),s}(\rn)$
and $\gamma\in\zz_+^n$ with $|\gamma|\le s$,
we see that, for all $t\in(0,\fz)$, it holds true that $\phi_t\ast b_1\equiv0$
and hence
\begin{equation}\label{cm-in-1}
\int_{\wh {Q_0}}[\wz A_{(\az,\ez),s}(b_1)(x,t)]^2\,\frac{dxdt}{t}=0.
\end{equation}

For $b_2$, from Lemmas \ref{l-intri-bou} and \ref{l-intri-eq}, we deduce that
$$\int_{\wh {Q_0}}[\wz A_{(\az,\ez),s}(b_2)(x,t)]^2\,\frac{dxdt}{t}
\ls\|b_2\|_{L^2(\rn)}^2\sim\int_{2Q_0}|b(x)-P_{Q_0}^sb(x)|^2\,dx,$$
which, together with Corollary \ref{c-jnineq}, implies that
\begin{equation}\label{cm-in-2}
\frac{|Q_0|^{1/2}}{\|\chi_{Q_0}\|_{\vlp}}
\lf\{\int_{\wh {Q_0}}|\wz A_{(\az,\ez),s}( b_2)(x,t)|^2\frac{dxdt}{t}\r\}
\ls\|b\|_{\cl_{1,p(\cdot),s}(\rn)}.
\end{equation}
By an argument similar to that used in the proof of \eqref{cm-x2}, we find that
\begin{equation*}
\frac{|Q_0|^{1/2}}{\|\chi_{Q_0}\|_{\vlp}}
\lf\{\int_{\wh {Q_0}}|\wz A_{(\az,\ez),s}( b_3)(x,t)|^2\frac{dxdt}{t}\r\}
\ls\|b\|_{\cl_{1,p(\cdot),s}(\rn)}.
\end{equation*}
From this, combining \eqref{cm-in-1} and \eqref{cm-in-2}, we conclude that
$d\mu_b$ is a $p(\cdot)$-Carleson measure on $\urn$ and
$\|d\mu_b\|_{p(\cdot)}\ls\|b\|_{\cl_{1,p(\cdot),s}(\rn)}$,
which completes the proof of Theorem \ref{t-cm2}.
\end{proof}

\vspace{0.2cm}

\noindent\textbf{Acknowledgement.}
{ The authors would like to thank the referees for their several valuable remarks
which improve the presentation of this article.} This project is
supported by the National Natural Science Foundation of China
(Grant Nos.~11171027 \& 11361020),
the Specialized Research Fund for the Doctoral Program of Higher Education
of China (Grant No. 20120003110003) and the Fundamental Research
Funds for Central Universities of China (Grant No.~2012LYB26).
{ The second author, Dachun Yang, is the corresponding author of this article.}

\end{document}